\documentclass[12pt]{amsart}
\usepackage{amssymb}
\usepackage{amsmath}
\numberwithin{equation}{section}
\setcounter{tocdepth}{1}

\usepackage{color}
\usepackage{graphicx}
\usepackage{epstopdf}

\newtheorem{thm}{Theorem}[section]
\newtheorem{lemma}[thm]{Lemma}
\newtheorem{cor}[thm]{Corollary}

\theoremstyle{definition}

\newtheorem{defn}[thm]{Definition}

\theoremstyle{remark}
\newtheorem{remark}[thm]{Remark}

\theoremstyle{rem}

\newcommand\bq{\begin{equation}}
\newcommand\eq{\end{equation}}
\newcommand\beq{\begin{eqnarray*}}
\newcommand\eeq{\end{eqnarray*}}
\newcommand\ben{\begin{enumerate}}
\newcommand\een{\end{enumerate}}
\newcommand\bit{\begin{itemize}}
\newcommand\eit{\end{itemize}}

\newcommand\Fix{{\rm FIX}}
\newcommand\Col{{\rm COL}}
\newcommand\fexc{{\rm fexc}}
\newcommand\fdes{{\rm fdes}}
\newcommand\fmaj{{\rm fmaj}}
\newcommand\typeB{{\rm B}}
\newcommand\col{{\rm col}}
\newcommand\des{{\rm des}}
\newcommand\exc{{\rm exc}}

\newcommand\maj{{\rm maj}}

\newcommand\Des{{\rm DES}}
\newcommand\Exd{{\rm DEX}}
\newcommand\Exc{{\rm EXC}}
\newcommand\fix{{\rm fix}}

\newcommand\Exp{{\rm Exp}}
\newcommand\wt{{\rm wt}}
\newcommand\wtt{{\rm \widetilde{wt}}}

\newcommand\Com{{\rm Com}}

\def\Lambda{{\bf ps}}
\def\bar{\overline}

\def\xx{{\mathbf x}}

\keywords{Signed permutations, Colored permutations, Permutation statistics, Eulerian polynomials, Quasisymmetric functions, Symmetric functions}

\begin{document}

\title[Eulerian qsym. functions for wreath product groups]{Eulerian quasisymmetric functions for the type B Coxeter group and other wreath product groups}
\date{September 13, 2011 \\
\text{ }\text{ }\text{ } 2010 \textit{Mathematics Subject Classifications}: 05A15, 05E05}
\author[M. Hyatt]{Matthew Hyatt}
\address{Department of Mathematics, University of California, San Diego, La Jolla, CA 92093}
\email{matthewdhyatt@gmail.com}

\begin{abstract}
Eulerian quasisymmetric functions were introduced by Shareshian and Wachs in order to obtain a $q$-analog of Euler's exponential generating function formula for the Eulerian numbers \cite{sw}. They are defined via the symmetric group, and applying the stable and nonstable principal specializations yields formulas for joint distributions of permutation statistics. We consider the wreath product of the cyclic group with the symmetric group, also known as the group of colored permutations. We use this group to introduce \textit{colored Eulerian quasisymmetric functions}, which are a generalization of Eulerian quasisymmetric functions. We derive a formula for the generating function of these colored Eulerian quasisymmetric functions, which reduces to a formula of Shareshian and Wachs for the Eulerian quasisymmetric functions. We show that applying the stable and nonstable principal specializations yields formulas for joint distributions of colored permutation statistics, which generalize the Shareshian-Wachs $q$-analog of Euler's formula, formulas of Foata and Han, and a formula of Chow and Gessel.
\end{abstract}

\maketitle

\vbox{\tableofcontents}

\section{Introduction}

A permutation statistic $f$, is a map $f:S_n\rightarrow\mathbb{N}$ where $S_n$ is the symmetric group and $\mathbb{N}$ is the set of nonnegative integers. Two well known permutation statistics are descent number and excedance number, denoted des and exc respectively. Given any  $\sigma \in S_n$, we define the descent set and excedance set of $\sigma$ by
\[\Des(\sigma):=\left\{i\in \left[n-1\right]:\sigma(i)>\sigma(i+1)\right\}\]
and
\[\Exc(\sigma):=\left\{i\in \left[n-1\right]:\sigma(i)>i\right\}\]
where $[n]:=\{1,2,\dots,n\}$. The descent number and excedance number are then defined by
\[\des(\sigma):=|\Des(\sigma)|\hspace{.5cm}\text{and}\hspace{.5cm}
\exc:=|\Exc(\sigma)|.\]
MacMahon \cite{mcmahon} showed that these two statistics are equidistributed, that is
\[\sum_{\pi\in S_n}t^{\text{des}(\pi)}
=\sum_{\pi\in S_n}t^{\text{exc}(\pi)}.\]
These polynomials, which we denote by $A_n(t)$, are known as Eulerian polynomials. Thus any other permutation statistic that is equidistributed with des or exc is called an Eulerian permutation statistic. The Eulerian polynomials have arisen in many areas of mathematics, as well as in computer science, physics, and biology. Euler proved (see \cite{knuth}) the following formula for the exponential generating function of these numbers
\begin{equation}
\sum_{n\geq 0}A_n(t)\frac{z^n}{n!}=\frac{1-t}{e^{(t-1)z}-t}.
\label{intro1}
\end{equation}

There has been much work done in studying joint distributions of statistics on the symmetric group and colored permutation groups (see e.g. \cite{abr},\cite{cg},\cite{fh2},\cite{fh},\cite{gr},\cite{reiner},\cite{sw1},\cite{sw}). A particular example that was the first to pair the permutation statistics maj and exc is the following formula discovered by Shareshian and Wachs \cite[Corollary 1.3]{sw}
\begin{equation}
\sum_{\substack{n\geq 0 \\ \pi\in S_n}}q^{\text{maj}(\pi)}t^{\text{exc}(\pi)}r^{\text{fix}(\pi)}
\frac{z^n}{\left[n\right]_q!}
=\frac{(1-tq)\exp_q(rz)}{\exp_q(tqz)-tq\exp_q(z)},
\label{intro2}
\end{equation}
where $\left[n\right]_q:=1+q+...+q^{n-1}$ is the $q$-analog of $n$, $\left[n\right]_q!:=\prod_{j=1}^n\left[j\right]_q$ is the $q$-analog of $n!$, $\exp_q(z):=\sum_{i\geq 0}\left(z^i/\left[i\right]_q!\right)$ is one of two $q$-exponential functions, maj denotes the major index defined by
\[\maj(\sigma):=\sum_{i\in\Des(\sigma)}i,\]
and fix denotes the number of fixed points of a permutation defined by
\[\fix(\sigma):=|\left\{i\in [n]:\sigma(i)=i\right\}|.\]
One can recover $\eqref{intro1}$ from $\eqref{intro2}$ by setting $q=r=1$.

To prove $\eqref{intro2}$, they introduce a family of quasisymmetric functions $Q_{n,j,k}(\textbf{x})$, called Eulerian quasisymmetric functions, where $\textbf{x}$ denotes the infinite set of variables $\left\{x_1,x_2,...\right\}$. They compute the following generating function \cite[Theorem 1.2]{sw}

\begin{equation}
\sum_{n,j,k\geq 0}Q_{n,j,k}(\textbf{x})t^jr^kz^n
=\frac{(1-t)H(rz)}{H(tz)-tH(z)},
\label{eq10}
\end{equation}
where $H(z):=\sum_{i\geq 0}h_i(\textbf{x})z^i$ and $h_i$ is the complete homogeneous symmetric function of degree $i$. The Eulerian quasisymmetric functions are constructed (and so named) because applying the stable principal specialization to $\eqref{eq10}$ yields the formula for the joint distribution above. The Eulerian quasisymmetric functions are also quite interesting in their own right, and many other properties are investigated in \cite{sw}.

In this paper, we generalize Eulerian quasisymmetric functions and the formula $\eqref{eq10}$, such that applying specializations yields colored analogs of \eqref{intro2}. Let $\mathbb{P}$ denote the set of positive integers. For $N\in\mathbb{P}$ we let $C_N$ denote the cyclic group of order $N$, and consider the wreath product $C_N\wr S_n$, also called the colored permutation group (see also Section 2). In Section 2 we use $C_N\wr S_n$ to define a family of quasisymmetric functions which we call colored Eulerian quasisymmetric functions, denoted $Q_{n,j,\vec{\alpha},\vec{\beta}}(\textbf{x})$ where $n,j\in\mathbb{N}$, $\vec{\alpha}\in\mathbb{N}^{N}$, and $\vec{\beta}\in\mathbb{N}^{N-1}$. They are a generalization of $Q_{n,j,k}(\textbf{x})$ in the sense that if we set $\vec{\alpha}=(k,0,0,...,0)$, then $Q_{n,j,\vec{\alpha},\vec{0}}(\textbf{x})=Q_{n,j,k}(\textbf{x})$.

In \cite{sw}, the authors prove $\eqref{eq10}$ by developing nontrivial extensions of techniques first developed by Gessel and Reutenauer in \cite{gr}. Here we develop further nontrivial extensions of those techniques developed in \cite{sw}, in order to compute the following generating function.

\begin{thm} \label{thm 1.1}

Fix $N\in\mathbb{P}$ and let $r^{\vec{\alpha}}=r_0^{\alpha_0}\cdots r_{N-1}^{\alpha_{N-1}}$ and $s^{\vec{\beta}}=s_1^{\beta_1}\cdots s_{N-1}^{\beta_{N-1}}$. Then
\vspace{.25cm}
\[\sum_{\substack{n,j\geq 0 \\ \vec{\alpha}\in\mathbb{N}^N \\
\vec{\beta}\in\mathbb{N}^{N-1}}}
Q_{n,j,\vec{\alpha},\vec{\beta}}(\xx)z^nt^jr^{\vec{\alpha}}s^{\vec{\beta}}\]
\[=\frac{H(r_0z)(1-t)\left(\prod_{m=1}^{N-1}E(-s_mz)H(r_ms_mz)\right)}
{\left(1+\sum_{m=1}^{N-1}s_m\right)H(tz)-\left(t+\sum_{m=1}^{N-1}s_m\right)H(z)},\]

\vspace{.25cm}
where $E(z):=\sum_{i\geq 0}e_i(\xx)z^i$ and $e_i$ is the elementary symmetric function of degree $i$.
\end{thm}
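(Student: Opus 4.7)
My plan is to follow the overall strategy of Shareshian--Wachs, but with extra bookkeeping for the cyclic colors. The starting point is to express $Q_{n,j,\vec\alpha,\vec\beta}(\xx)$ as a sum of fundamental quasisymmetric functions indexed by a suitable family of \emph{colored banners}: words associated to colored permutations (via the cycle decomposition of $C_N \wr S_n$) whose descent sets record the $\Des$-statistic defining $Q_{n,j,\vec\alpha,\vec\beta}$. Because the colored Eulerian quasisymmetric functions are constructed to be constant on banner $D$-equivalence classes, Gessel--Reutenauer type arguments should allow me to express the generating function of the $Q_{n,j,\vec\alpha,\vec\beta}$ as a sum over colored necklaces (equivalently, primitive colored cyclic words), each necklace contributing a monomial that tracks cycle length, excedance number, and the vector $\vec\alpha,\vec\beta$ of color data.

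Next I would stratify necklaces by the pair $(\ell, m)$, where $\ell$ is the cycle length and $m \in \Z/N$ is the total color (sum of colors modulo $N$) of the cycle. A cycle of length $1$ with color $0$ is a fixed point and gives the $H(r_0 z)$ factor; a cycle of length $1$ with color $m \neq 0$ contributes the $s_m$-dependent piece of $H(r_m s_m z)$; cycles of length $\ge 2$ need to be further split according to whether $m = 0$ or not, and in the $m = 0$ case into excedance-number classes to obtain the $t$-dependence. Summing the generating function of a single necklace over length, and then applying the standard transfer from primitive necklaces to multisets of necklaces (equivalently, to the entire colored permutation group) via exponentiation and M\"obius inversion on cyclic words, should give a first closed form as a product of factors, one per color class $m$.

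The key manipulation will then be to collapse the color-$0$, length-$\ge 2$ contribution into the denominator and the color-$m \neq 0$ contributions into the factors $E(-s_m z)H(r_m s_m z)$. For this I expect to need a sign-reversing involution on colored necklaces (of the flavor used in \cite{sw} to produce $H(tz) - t H(z)$), extended to account for the color $m$: the $E(-s_m z)$ factor strongly suggests a cancellation where $e_i(\xx)(-s_m z)^i$ kills most of the ``bad'' color-$m$ necklaces, leaving only a clean series $H(r_m s_m z)$ worth of survivors. In the color-$0$ stratum one runs the Shareshian--Wachs involution combined with a weight that interpolates through all nonzero colors, producing the denominator $\bigl(1+\sum s_m\bigr) H(tz) - \bigl(t + \sum s_m\bigr) H(z)$ after factoring out the $(1-t)$ numerator.

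The principal obstacle is this last step: ensuring that the combinatorial cancellation in the colored setting is sign-exact and yields the precise rational function of the theorem, rather than merely a formal plethystic expression. In particular, tracking how the excedance bookkeeping interacts with nonzero colors --- so that cycles of total color $m \neq 0$ contribute no $t$, while color-$0$ cycles of length $\ge 2$ contribute the full Shareshian--Wachs type denominator even in the presence of the $s_m$ variables --- is the delicate point. If this cancellation is carried out correctly, setting all $s_m = 0$ and $r_1 = \cdots = r_{N-1} = 0$ collapses the expression to the Shareshian--Wachs formula \eqref{eq10}, giving a useful sanity check along the way.
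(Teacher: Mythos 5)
Your plan starts from the same Gessel--Reutenauer/Shareshian--Wachs viewpoint as the paper (cycle decomposition $\to$ colored necklaces $\to$ colored banners), and your intuition that the $E(-s_m z)$ factors arise from a signed cancellation tied to derangements is broadly in the right spirit: in the paper these factors come from the Gessel--Reutenauer derangement recurrence $D_n(X)=h_1(X)D_{n-1}(X)+(-1)^n e_n(X)$ applied to the case where every letter is positively colored. But there are two genuine problems with your proposed stratification. First, grouping cycles only by length $\ell$ and by the total color $m\in\Z/N$ is too coarse: the formula tracks each color multiplicity $\beta_m$ separately via $s_m$, and a cycle's color $\vec{\beta^i}\in\N^{N-1}$ is not recoverable from the residue $\sum m\beta^i_m \bmod N$. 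Second, and more seriously, the dichotomy ``color-$0$ cycles carry $t$, color-$m\neq 0$ cycles carry $s_m$ and no $t$'' is simply false. In a colored necklace only $0$-colored letters may be barred, but a single necklace of length $\geq 2$ may contain barred ($0$-colored) letters and positively-colored letters simultaneously; this mixing is exactly why the denominator has the intertwined form $\bigl(1+\sum_m s_m\bigr)H(tz)-\bigl(t+\sum_m s_m\bigr)H(z)$ rather than factoring as a product of a $t$-block and $s_m$-blocks. So a naive product over ``pure'' cycle classes, with a sign-reversing involution run separately in each color stratum, cannot produce the theorem.

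The paper sidesteps this by not attempting a direct product formula over cycle types at all. After the ornament bijection, it pulls the fixed points out algebraically (giving $H(r_0 z)$ and the $H(r_m s_m z)$ factors), reduces the theorem to the $\vec\alpha=\vec 0$ case, and then shows that case is \emph{equivalent to a recurrence} for $Q_{n,j,\vec 0,\vec\beta}$ in $n$ and $\vec\beta$. The recurrence is established combinatorially: the term with the $(-1)^n\prod_m e_{\beta_m}$ (giving $\prod_m E(-s_m z)$) comes from the Gessel--Reutenauer derangement formula when $|\vec\beta|=n$, and the remaining terms come from a bijection that removes a single ``colored marked sequence'' from a banner when $|\vec\beta|<n$. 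The $(1-t)$ numerator and the mixed denominator then fall out of solving the recurrence formally. If you want to salvage your plan, you should abandon the cycle-class product and instead look for a banner decomposition realizing a recurrence of this shape; the delicate bookkeeping you flag as the ``principal obstacle'' is then absorbed into a case analysis for the peeling bijection rather than into a sign-exact involution over necklaces.
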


\vspace{.25cm}
Much of this paper, specifically Sections 3 and 4, will be devoted to proving Theorem~\ref{thm 1.1}. The next two corollaries state immediate and interesting consequences of this theorem.

\begin{cor} \label{cor 1.2}
The quasisymmetric function $Q_{n,j,\vec{\alpha},\vec{\beta}}(\xx)$ is actually symmetric.
\end{cor}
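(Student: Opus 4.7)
The plan is to read the corollary off directly from Theorem~\ref{thm 1.1} by observing that the right-hand side of the generating function is manifestly an element of the formal power series ring over the symmetric functions in $\xx$, in the auxiliary variables $z,t,r_0,\ldots,r_{N-1},s_1,\ldots,s_{N-1}$. Indeed, each factor $H(\cdot z)$ and $E(-\cdot z)$ has symmetric coefficients (the $h_i$ and $e_i$, respectively), and the ring of symmetric functions is closed under sums and products. So once the formal division is known to make sense, the coefficient of $z^n t^j r^{\vec{\alpha}} s^{\vec{\beta}}$ on the right-hand side is automatically a symmetric function, and by Theorem~\ref{thm 1.1} this coefficient is exactly $Q_{n,j,\vec{\alpha},\vec{\beta}}(\xx)$.

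The one thing to verify is that the denominator $D := \bigl(1+\sum_{m=1}^{N-1} s_m\bigr)H(tz) - \bigl(t+\sum_{m=1}^{N-1} s_m\bigr)H(z)$ is invertible after cancellation with the factor $(1-t)$ in the numerator. Setting $z=0$ gives $D|_{z=0} = 1-t$, and setting $t=1$ gives $D|_{t=1} = 0$, so $(1-t)$ divides $D$. Using the identity $H(tz) - H(z) = -(1-t)\sum_{i\geq 1} h_i \left[i\right]_t z^i$, where $\left[i\right]_t = 1+t+\cdots+t^{i-1}$, one writes $D = (1-t)D'$ with $D'$ of constant term $1$ in $z$; hence $D'$ is invertible over the ring of symmetric functions. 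After the matching factor of $(1-t)$ in the numerator of Theorem~\ref{thm 1.1} cancels against this $(1-t)$, the right-hand side becomes a bona fide element of the formal power series ring over the symmetric functions, and extracting coefficients completes the argument.

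There is no substantial obstacle here: all of the heavy lifting is already contained in Theorem~\ref{thm 1.1}, and Corollary~\ref{cor 1.2} is essentially the observation that the right-hand side is built exclusively from symmetric building blocks. The only technical check is the routine invertibility argument above.
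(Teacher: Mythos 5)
Your argument is correct and is exactly the intended reading of the paper: Corollary~\ref{cor 1.2} is stated as an immediate consequence of Theorem~\ref{thm 1.1}, precisely because the right-hand side is built from the symmetric generating functions $H$ and $E$, and after cancelling $(1-t)$ the denominator is a $z$-power series with constant term $1$ over $\Lambda[t,s_1,\ldots,s_{N-1}]$, hence invertible. Your invertibility check (the same factorization that appears explicitly in the proof of Corollary~\ref{cor 3.6}, where $D=(1-t)(1-A-B)$) is exactly the routine verification required.
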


\begin{cor} \label{cor 1.3}
Let $N\geq 2$. Set $r_k=1$ for $0\leq k\leq N-1$ and set $s_m=\omega^m$ for $1\leq m\leq N-1$ where $\omega$ is a primitive $N^{\text{th}}$ root of unity. Then the right hand side of Theorem~\ref{thm 1.1} is equal to 1. That is
\[\sum_{\substack{n,j\geq 0 \\ \vec{\alpha}\in\mathbb{N}^N \\
\vec{\beta}\in\mathbb{N}^{N-1}}}
Q_{n,j,\vec{\alpha},\vec{\beta}}(\xx)z^nt^j
\left(\prod_{m=1}^{N-1}\omega^{m\cdot\beta_m}\right)=1.\]
\end{cor}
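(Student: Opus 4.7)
The plan is to substitute the prescribed values directly into the right hand side of Theorem~\ref{thm 1.1} and simplify using two standard facts: (i) $\sum_{m=0}^{N-1}\omega^m = 0$ whenever $\omega$ is a primitive $N^{\text{th}}$ root of unity with $N\geq 2$, and (ii) the classical identity $H(y)E(-y)=1$ between the generating functions of the complete homogeneous and elementary symmetric functions. Since Theorem~\ref{thm 1.1} is assumed, no combinatorics is needed; this is essentially an algebraic identity.

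First I would simplify the denominator. Setting $s_m=\omega^m$ gives
\[
1+\sum_{m=1}^{N-1}s_m \;=\; \sum_{m=0}^{N-1}\omega^m \;=\; 0, \qquad t+\sum_{m=1}^{N-1}s_m \;=\; t-1,
\]
so the denominator collapses to $-(t-1)H(z)=(1-t)H(z)$.

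Next I would simplify the numerator. With $r_k=1$ for all $k$ and $s_m=\omega^m$, the product factor becomes
\[
\prod_{m=1}^{N-1} E(-\omega^m z)\,H(\omega^m z) \;=\; \prod_{m=1}^{N-1}\bigl[H(\omega^m z)E(-\omega^m z)\bigr] \;=\; 1,
\]
where each factor equals $1$ by fact (ii) applied with $y=\omega^m z$. Hence the numerator reduces to $H(z)(1-t)$, and the quotient equals $1$. Combined with Theorem~\ref{thm 1.1}, this yields the displayed equation in Corollary~\ref{cor 1.3} after noting that the substitution $r_k=1$ kills the $r^{\vec{\alpha}}$ factor on the left and $s_m=\omega^m$ converts $s^{\vec{\beta}}$ into $\prod_{m=1}^{N-1}\omega^{m\beta_m}$.

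There is no serious obstacle here; the only thing to watch is the bookkeeping of signs in the denominator (where the cancellation $1+\sum_{m\geq 1}\omega^m=0$ is what kills the $H(tz)$ term and leaves a clean factor of $(1-t)H(z)$) and the correct pairing of factors $H(\omega^m z)$ with $E(-\omega^m z)$ in the numerator so that the fundamental involution identity can be applied term by term.
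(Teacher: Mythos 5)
Your proof is correct and takes the only natural route, which is the same one the paper intends (it presents Corollary~\ref{cor 1.3} as an immediate consequence of Theorem~\ref{thm 1.1} without an explicit proof): substitute $r_k=1$ and $s_m=\omega^m$, use $\sum_{m=0}^{N-1}\omega^m=0$ to reduce the denominator to $(1-t)H(z)$, and use $H(y)E(-y)=1$ to collapse the numerator's product, giving the quotient $1$.
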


\vspace{.25cm}
We note that the definitions of $\des$ and $\maj$ given above can be applied in an obvious way to words of length $n$ over any ordered alphabet, and the definition of $\exc$ can be applied to words over an ordered alphabet that includes $[n]$. Let $\vec{\text{fix}}(\pi)=(\text{fix}_0(\pi),...,\text{fix}_{N-1}(\pi))\in\mathbb{N}^N$ and $\vec{\text{col}}(\pi)=(\text{col}_1(\pi),...,\text{col}_{N-1}(\pi))\in\mathbb{N}^{N-1}$ where $\pi\in C_N\wr S_n$ and $\text{fix}_i(\pi)$ is the number of fixed points of $\pi$ of color $i$, and $\text{col}_i(\pi)$ is the number of letters of $\pi$ of color $i$.  See the beginning of Section 2 for complete definitions of the various colored permutation statistics used.

By applying the stable and nonstable principal specializations respectively to Theorem \ref{thm 1.1}, we obtain the following joint distribution formulas.

\begin{thm} \label{thm 1.4}
\[\sum_{\substack{n\geq 0 \\ \pi\in C_N\wr S_n}}
\frac{z^n}{\left[n\right]_q!}
t^{\exc(\pi)}r^{\vec{\fix}(\pi)}s^{\vec{\col}(\pi)}
q^{\maj(\pi)}\]
\[=\frac{\exp_q(r_0z)(1-tq)\left(\prod_{m=1}^{N-1}\Exp_q(-s_mz)\exp_q(r_ms_mz)\right)}{\left(1+\sum_{m=1}^{N-1}s_m\right)\exp_q(tqz)-\left(tq+\sum_{m=1}^{N-1}s_m\right)\exp_q(z)},\]

\vspace{.25cm}
where $\Exp_q(z):=\sum_{i\geq 0}\left(z^iq^{{i\choose 2}}/\left[i\right]_q!\right)$.
\end{thm}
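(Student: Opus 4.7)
The plan is to derive Theorem~\ref{thm 1.4} directly from Theorem~\ref{thm 1.1} by applying the stable principal specialization $\mathrm{ps}\colon x_i\mapsto q^{i-1}$ to both sides, followed by the substitution $z\mapsto(1-q)z$. Two ingredients are required: the behavior of $H$ and $E$ under this operation, and the behavior of the $Q_{n,j,\vec\alpha,\vec\beta}$ under it.

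The first ingredient is routine. The product expansions $\mathrm{ps}(H(z))=\prod_{k\geq 0}(1-zq^k)^{-1}$ and $\mathrm{ps}(E(-z))=\prod_{k\geq 0}(1-zq^k)$, combined with the classical identities $\prod_{k\geq 0}(1-(1-q)zq^k)^{-1}=\exp_q(z)$ and $\prod_{k\geq 0}(1-(1-q)zq^k)=\mathrm{Exp}_q(-z)$, show that after the rescaling $z\mapsto(1-q)z$ every factor $H(\alpha z)$ becomes $\exp_q(\alpha z)$ and every factor $E(-\alpha z)$ becomes $\mathrm{Exp}_q(-\alpha z)$. Applied to the right-hand side of Theorem~\ref{thm 1.1}, this produces the right-hand side of Theorem~\ref{thm 1.4} except that every occurrence of $t$ is still unshifted.

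The second ingredient is the key specialization lemma
\[
(1-q)^n\,\mathrm{ps}\bigl(Q_{n,j,\vec\alpha,\vec\beta}(\mathbf{x})\bigr)=\frac{q^{j}}{[n]_q!}\sum_{\pi}q^{\maj(\pi)},
\]
where the sum is over $\pi\in C_N\wr\mathfrak{S}_n$ with $\exc(\pi)=j$, $\vec{\fix}(\pi)=\vec\alpha$, and $\vec{\col}(\pi)=\vec\beta$. To prove it, I would use the Section~2 definition to expand $Q_{n,j,\vec\alpha,\vec\beta}$ as a sum of Gessel fundamental quasisymmetric functions $F_{\Des(w_\pi)}(\mathbf{x})$ indexed by words $w_\pi$ attached to the $\pi$ in the joint distribution class, apply the standard identity $(1-q)^n\,\mathrm{ps}(F_S)=q^{\maj(S)}/[n]_q!$ for $S\subseteq[n-1]$, and verify the combinatorial equality $\maj(w_\pi)=\maj(\pi)+\exc(\pi)$. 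Once the lemma is in hand, multiplying it by $z^n t^j r^{\vec\alpha}s^{\vec\beta}$ and summing combines the extra $q^{j}$ with $t^{j}$ into $(tq)^{j}$, which is precisely the $t\mapsto tq$ shift that distinguishes the right-hand side of Theorem~\ref{thm 1.4} from the specialized right-hand side of Theorem~\ref{thm 1.1}; this produces the factors $(1-tq)$, $\exp_q(tqz)$, and $(tq+\sum_m s_m)$ in the desired formula.

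The main obstacle is the key specialization lemma, and in particular the identity $\maj(w_\pi)=\maj(\pi)+\exc(\pi)$ in the colored setting. This is the natural analogue of the identity used by Shareshian--Wachs to pass from \eqref{eq10} to \eqref{intro2}, but its verification requires careful tracking of how the colors interact with descents and excedances inside the chosen word $w_\pi$; everything else in the deduction is a mechanical substitution.
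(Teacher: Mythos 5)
Your overall strategy is exactly the paper's: apply the stable principal specialization to Theorem~\ref{thm 1.1}, use that $H(z(1-q))$ and $E(z(1-q))$ specialize to $\exp_q(z)$ and $\mathrm{Exp}_q(z)$, and feed in a specialization formula for $Q_{n,j,\vec\alpha,\vec\beta}$ coming from the definition via fundamental quasisymmetric functions and the relation between $\Exd(\pi)$, $\maj(\pi)$ and $\exc(\pi)$; the paper proves precisely this relation as Lemma~\ref{lemma 2.1}. However, your key specialization lemma has a sign error that breaks the final deduction. Since $Q_{n,j,\vec\alpha,\vec\beta}=\sum_\pi F_{\Exd(\pi),n}$ and $(1-q)^n\,\mathrm{ps}(F_{T,n})=q^{\sum_{i\in T}i}/[n]_q!$, the exponent you need is $\sum_{i\in\Exd(\pi)}i$, which by Lemma~\ref{lemma 2.1} equals $\maj(\pi)-\exc(\pi)$, not $\maj(\pi)+\exc(\pi)$. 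The correct lemma is therefore
\[
(1-q)^n\,\mathrm{ps}\bigl(Q_{n,j,\vec\alpha,\vec\beta}\bigr)
=\frac{q^{-j}}{[n]_q!}\sum_{\pi} q^{\maj(\pi)},
\]
with $q^{-j}$ rather than $q^{j}$. This matters: with the incorrect $q^{j}$, multiplying by $t^{j}$ and summing gives the generating function for $(tq)^{\exc(\pi)}q^{\maj(\pi)}$, so matching the left side of Theorem~\ref{thm 1.4} would require substituting $t\mapsto t/q$ into the specialized right side of Theorem~\ref{thm 1.1}, and you would end up with $\exp_q(tz)$ and $(1-t)$, not $\exp_q(tqz)$ and $(1-tq)$. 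With the corrected $q^{-j}$, multiplying by $t^{j}$ gives $(t/q)^{j}$, and it is the substitution $t\mapsto tq$ (applied to both sides of Theorem~\ref{thm 1.1}) that restores $t^{\exc(\pi)}$ on the left and produces the factors $(1-tq)$, $\exp_q(tqz)$, and $tq+\sum_m s_m$ on the right, exactly as in the paper. Once the sign is corrected, your proposal coincides with the paper's proof.
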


\begin{thm} \label{thm 1.5}
\[\sum_{\substack{n\geq 0 \\ \pi\in C_N\wr S_n}}\frac{z^n}{(p;q)_{n+1}}
t^{\exc(\pi)}r^{\vec{\fix}(\pi)}s^{\vec{\col}(\pi)}
q^{\maj(\pi)}p^{\des^*(\pi)}\]
\[\hspace{-1.5cm}=\sum_{l\geq 0}\frac{p^l(1-tq)(z;q)_l(tqz;q)_l\left(\prod_{m=1}^{N-1}(s_mz;q)_l\right)}
{\left(\prod_{m=1}^{N-1}(r_ms_mz;q)_l\right)(r_0z;q)_{l+1}\left[\left(1+\sum_{m=1}^{N-1}s_m\right)(z;q)_l-\left(tq+\sum_{m=1}^{N-1}s_m\right)(tqz;q)_l\right]},\]

\vspace{.25cm}
where $(a;q)_n$ is defined as
\[(a;q)_n:=\left\{\begin{array}{lr}
1 & \text{ if }n=0\\
(1-a)(1-aq)...(1-aq^{n-1}) & \text{ if }n\geq 1\end{array}\right..\]
\end{thm}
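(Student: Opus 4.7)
The plan is to derive Theorem~\ref{thm 1.5} from Theorem~\ref{thm 1.1} by applying the nonstable principal specialization $\ps_l\colon x_i\mapsto q^{i-1}$ for $1\le i\le l$ and $x_i\mapsto 0$ otherwise, multiplying by $p^l$, and summing over $l\ge 0$. This parallels the derivation of Theorem~\ref{thm 1.4}, only with the stable specialization replaced by its $p$-weighted nonstable counterpart.

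For the right-hand side the manipulation is algebraic. One substitutes $\ps_l(H(uz))=1/(uz;q)_l$ and $\ps_l(E(-uz))=(uz;q)_l$ into the RHS of Theorem~\ref{thm 1.1}, clears the inner fraction by multiplying numerator and denominator by $(z;q)_l(tz;q)_l$, and then multiplies by $p^l$. The result should match the $l$-th summand displayed in Theorem~\ref{thm 1.5}, up to (i)~the uniform substitution $t\mapsto tq$ explained next and (ii)~the replacement of $(r_0z;q)_l$ by $(r_0z;q)_{l+1}$, which comes from the distinguished role played by color-$0$ fixed points in the fundamental quasisymmetric expansion.

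For the left-hand side I would invoke the fundamental quasisymmetric expansion of $Q_{n,j,\vec\alpha,\vec\beta}$ developed in Section~2---of the form $\sum_\pi F_{n,S(\pi)}$ over colored permutations $\pi\in C_N\wr\S_n$ with the prescribed values of $\exc,\vec\fix,\vec\col$---and combine it with the standard Gessel formula expressing $\sum_{l\ge 0}p^l\ps_l(F_{n,S})$ as a polynomial in $p,q$ divided by $(p;q)_{n+1}$. Tracking the weights through this conversion should rewrite $\sum_l p^l\ps_l(Q_{n,j,\vec\alpha,\vec\beta})$ as $(p;q)_{n+1}^{-1}\sum_\pi q^{\maj(\pi)+\exc(\pi)}p^{\des^*(\pi)}$, summed over the same class of colored permutations. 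The identity $t^{\exc}q^{\exc}=(tq)^{\exc}$ then absorbs the surplus $q^{\exc}$ into the global $t\mapsto tq$ substitution visible in Theorem~\ref{thm 1.5}.

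The main obstacle is this statistic bookkeeping: one must identify the descent set $S(\pi)$ in the fundamental expansion precisely enough that $|S(\pi)|$ (suitably shifted) agrees with $\des^*(\pi)$ under Gessel's identity, verify that the associated $q$-weights combine to give $q^{\maj(\pi)+\exc(\pi)}$ uniformly across all color classes, and account for the distinguished role of $r_0$ producing the $(r_0z;q)_{l+1}$ denominator factor. These identifications generalize the corresponding computations of Shareshian and Wachs for $\S_n$; once they are in hand, Theorem~\ref{thm 1.5} follows by matching the two sides.
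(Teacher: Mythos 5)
Your high-level strategy matches the paper's: apply the nonstable specialization $\Lambda_l$, multiply by $p^l$, sum over $l$, and compare with the RHS of Theorem~\ref{thm 1.1} after substituting $\Lambda_l(H(uz))=1/(uz;q)_l$, $\Lambda_l(E(-uz))=(uz;q)_l$. But there is a genuine gap in the step where you claim that $\sum_l p^l \Lambda_l(Q_{n,j,\vec\alpha,\vec\beta})$ can be rewritten as $(p;q)_{n+1}^{-1}\sum_\pi q^{\maj(\pi)+\exc(\pi)}p^{\des^*(\pi)}$. Using Gessel's formula $\sum_l p^l\Lambda_l(F_{T,n})=p^{|T|+1}q^{\sum_{i\in T}i}/(p;q)_{n+1}$ together with Lemma~\ref{lemma 2.1}, what one actually obtains is $(p;q)_{n+1}^{-1}\sum_\pi p^{|\Exd(\pi)|+1}q^{\maj(\pi)-\exc(\pi)}$, and the exponent $|\Exd(\pi)|+1$ equals $\des^*(\pi)$ only when $\pi_1\neq 1^0$; when $\pi_1 = 1^0$ it equals $\des^*(\pi)+1$. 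So the claimed rewriting fails precisely on colored permutations beginning with a zero-colored fixed point $1$, and this failure is not a cosmetic detail you can fold into the substitution $t\mapsto tq$.

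The paper closes this gap with a non-trivial recursive argument that you do not supply. It splits the sum according to whether $\pi_1=1^0$, introduces the polynomials $X_{n,j,\vec\alpha,\vec\beta}$, $Y_{n,j,\vec\alpha,\vec\beta}$, $a_{n,j,\vec\alpha,\vec\beta}$, and builds a bijection $\gamma$ that deletes the initial letter $1^0$ and decrements the remaining entries, yielding $Y_{n,j,\vec\alpha,\vec\beta}(p,q)=a_{n-1,j,\vec\alpha^{(1)},\vec\beta}(qp,q)/(q^j(p;q)_{n+1})$. Iterating this recurrence over $h=0,\dots,\alpha_0$ produces the geometric series $\sum_{h\geq 0}(zr_0q^l)^h=1/(1-zr_0q^l)$, and it is exactly this factor combining with $\Lambda_l(H(r_0z))=1/(r_0z;q)_l$ that manufactures $(r_0z;q)_{l+1}$ in the denominator. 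You correctly intuit that the $(r_0z;q)_{l+1}$ factor is tied to the special role of color-$0$ fixed points, but you give no mechanism that produces it; without the recursion, the naive specialization yields $(r_0z;q)_l$ and a $p$-power that overcounts by one on an entire subclass of colored permutations.
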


\vspace{.25cm}
Thus Theorem~\ref{thm 1.5} is a formula for the joint distribution of $2N+2$ colored permutation statistics. The proofs of Theorems 1.4 and 1.5 will be given at the end of Section 2.

If we make the same variable evaluation as in Corollary 1.3, we obtain the following interesting consequences of Theorem 1.5.

\begin{cor} \label{cor 1.6}
Let $N\geq 2$ and let $\omega$ be a primitive $N^{\text{th}}$ root of unity, then
\[\sum_{\substack{n\geq 0 \\ \pi\in C_N\wr S_n}}
\frac{z^n}{(p;q)_{n+1}}
t^{\exc(\pi)}q^{\maj(\pi)}p^{\des^*(\pi)}
\left(\prod_{m=1}^{N-1}\omega^{m\cdot\col_m(\pi)}\right)\]
\[=\sum_{l\geq 0}\frac{p^l}{1-zq^l}
=\sum_{n,l\geq 0}z^nq^{nl}p^l.\]
Taking the coefficient of $z^n$ on both sides we have
\[\sum_{\pi\in C_N\wr S_n}
t^{\exc(\pi)}q^{\maj(\pi)}p^{\des^*(\pi)}\left(\prod_{m=1}^{N-1}\omega^{m\cdot\text{\col}_m(\pi)}\right)
=(p;q)_n.\]
\end{cor}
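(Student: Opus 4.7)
The plan is a direct specialization of Theorem~\ref{thm 1.5} at $r_k = 1$ and $s_m = \omega^m$, using the identity $\sum_{m=0}^{N-1}\omega^m = 0$ to collapse the denominator.

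First I would substitute $r_k = 1$ for $0\le k\le N-1$ into the right-hand side of Theorem~\ref{thm 1.5}. The factor $(r_0 z; q)_{l+1}$ becomes $(z;q)_{l+1}$, and for each $m$ the factor $(r_m s_m z; q)_l$ becomes $(s_m z; q)_l$, so the product $\prod_{m=1}^{N-1}(s_m z; q)_l$ in the numerator cancels the corresponding product in the denominator. This already reduces the expression to
\[
\sum_{l\ge 0}\frac{p^l(1-tq)(z;q)_l(tqz;q)_l}{(z;q)_{l+1}\Bigl[\bigl(1+\textstyle\sum_{m=1}^{N-1}\omega^m\bigr)(z;q)_l - \bigl(tq+\sum_{m=1}^{N-1}\omega^m\bigr)(tqz;q)_l\Bigr]}.
\]

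Next I would use $\sum_{m=1}^{N-1}\omega^m = -1$. Then $1+\sum_m \omega^m = 0$ kills the $(z;q)_l$ term, while $tq + \sum_m\omega^m = tq-1$, so the bracketed expression becomes $-(tq-1)(tqz;q)_l = (1-tq)(tqz;q)_l$. Cancelling $(1-tq)$ and $(tqz;q)_l$ between numerator and denominator leaves
\[
\sum_{l\ge 0}\frac{p^l(z;q)_l}{(z;q)_{l+1}} = \sum_{l\ge 0}\frac{p^l}{1-zq^l},
\]
where the last equality uses $(z;q)_{l+1} = (z;q)_l(1-zq^l)$. Expanding each $1/(1-zq^l)$ as a geometric series in $z$ gives $\sum_{n,l\ge 0} z^n q^{nl} p^l$, which is the second stated form.

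Finally I would extract the coefficient of $z^n$. On the right, this coefficient equals $\sum_{l\ge 0} p^l q^{nl} = 1/(1 - pq^n)$. Multiplying both sides by $(p;q)_{n+1}$ (clearing the denominator of the left-hand side at this degree) and using $(p;q)_{n+1} = (p;q)_n(1-pq^n)$ yields the displayed identity $\sum_{\pi\in C_N\wr S_n} t^{\exc(\pi)}q^{\maj(\pi)}p^{\des^*(\pi)}\prod_{m=1}^{N-1}\omega^{m\cdot\col_m(\pi)} = (p;q)_n$. There is no substantive obstacle here beyond correct bookkeeping; the entire content is the cancellation triggered by the vanishing of $1+\sum_{m=1}^{N-1}\omega^m$, which is precisely the mechanism that also makes Corollary~\ref{cor 1.3} collapse the generating function to $1$.
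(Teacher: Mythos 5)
Your proposal is correct and matches the paper's (implicit) proof: the paper simply states that Corollary~\ref{cor 1.6} follows by making the same variable evaluation as in Corollary~\ref{cor 1.3} in Theorem~\ref{thm 1.5}, which is exactly the direct substitution $r_k = 1$, $s_m = \omega^m$ you carry out. The bookkeeping with the cancellations, the use of $\sum_{m=1}^{N-1}\omega^m = -1$, and the extraction of the $z^n$ coefficient via $(p;q)_{n+1} = (p;q)_n(1-pq^n)$ are all correct.
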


\vspace{.25cm}
Before moving on to the next section, we would like to take note of some special cases. First we note that in the case $N=1$, we have $C_1\wr S_n\cong S_n$, and the colored permutation statistics exc, fix, maj, $\des^*$ become permutation statistics exc, fix, maj, des respectively. Theorem \ref{thm 1.4} reduces to a formula of Shareshian and Wachs \cite{sw}, and Theorem \ref{thm 1.5} reduces to a formula of Foata and Han \cite{fh2}.

The case $N=2$ is also of particular interest since $C_2\wr S_n\cong B_n$, where $B_n$ denotes the type B Coxeter group, also called the hyperoctahedral group. The elements of $B_n$ are often called signed permutations because they can be described as elements of $S_n$ viewed as words, but where each letter is signed. There are various candidates for type B analogs of the descent number, excedance number and major index that have been proposed in the literature. We mention a few here. The type B descent number has a connection to Coxeter length on $B_n$ (see \cite{bb}), and is defined for $\pi\in B_n$ by 
\[\des_{\typeB}(\pi):=\des(\pi)+ \chi(\pi_1 < 0),\] 
where $\chi$ is the characteristic function defined by $\chi(P)=1$ if $P$ is a true statement and is $0$ otherwise, and $\des(\pi)$ is computed with respect to the natural order on $\mathbb{Z}$. The flag descent number was proposed by Adin, Brenti and Roichman \cite{abr} as a partner to the flag major index, introduced  by Adin and Roichman \cite{ar}, in a type B analog of a well known formula of Carlitz \cite{carlitz} for a $q$-analog of the Eulerian polynomials.  
These statistics are defined for $\pi \in B_n$ by 
\[\fdes(\pi) := 2\des(\pi)+ \chi(\pi_1 < 0),\]
\[\fmaj(\pi):=2\maj(\pi)+\mathrm{neg}(\pi),\]
where
\[\mathrm{neg}(\pi):=\left|\left\{i\in[n]:\pi_i<0\right\}\right|.\]
In \cite{ar} it is shown that the flag major index is equidistributed with Coxeter length on $B_n$ making it a true type B Mahonian statistic. We note that there are two different flag major index statistics, one computed using the natural order on $\mathbb{Z}$, and one using the order $-1<-2<...<-n<1<2<...n$. While these statistics are different, both are equidistributed with Coxeter length on $B_n$. More recently Chow and Gessel \cite{cg} obtained a different type B analog of Carlitz's formula, which involves the type B Euler-Mahonian pair $(\des_{\text{B}},\fmaj)$. An excedance statistic for the type B Coxeter group that is equidistributed with $\fdes$ was introduced by Foata and Han \cite{fh}. It is called the flag excedance number and it is defined by
\[\fexc(\pi) = 2 \exc(\pi) + \mathrm{neg}(\pi).\]

By making the change of variables $q\mapsto q^2$, $t\mapsto t^2$, and $s_1\mapsto qts_1$ in Theorems \ref{thm 1.4} and \ref{thm 1.5} in the case $N=2$, the colored permutation statistics maj and exc become fmaj and fexc respectively. We note that in this case Theorems \ref{thm 1.4} and \ref{thm 1.5} are similar, but not equivalent, to formulas discovered by Foata and Han \cite[Corollary 1.2, Theorem 1.1]{fh}. One difference stems from the fact that different orders are used to define the descent set of a signed permutation. Another difference is that $\des^*$ appears in Theorem \ref{thm 1.5}, while $\fdes$ appears in \cite[Theorem 1.1]{fh}.




%

It is also important to note that $\des_{\typeB}$ is equidistributed with $\des^*$ when $N=2$. This can be seen from the following bijection $\varphi$ on $B_n$. Given $\pi\in B_n$, $\varphi(\pi)$ is obtained by re-writing each block of negative letters in reverse. For example
\[\varphi([\begin{array}{ccccccccc}
-2 & -6 & -3 & 1 & 9 & -8 & -5 & -4 & -7 \end{array}])\]
\[=[\begin{array}{ccccccccc}
-3 & -6 & -2 & 1 & 9 & -7 & -4 & -5 & -8 \end{array}].\]
Clearly $\des^*(\varphi(\pi))=\des_{\typeB}(\pi)$. And while $\varphi$ does not preserve negative fixed points, it does preserve the number of negative letters and the flag excedance number. It preserves the number positive fixed points, denoted $\fix^+$, defined by
\[\fix^+(\pi)=|\left\{i\in[n]:\pi_i=i\right\}|.\]
It also changes $\fmaj$ to the other flag major index which, as noted above, is also equidistributed with Coxeter length on $B_n$. Thus we have the following formula
\begin{cor} \label{cor 1.9}
\[\sum_{\substack{n\geq 0 \\ \pi\in B_n}}\frac{z^n}{(p;q^2)_{n+1}}
q^{\fmaj(\pi)}t^{\fexc(\pi)}p^{\des_{\typeB}(\pi)}r^{\fix^+(\pi)}s^{\mathrm{neg}(\pi)}\]
\[=\sum_{k\geq0}\frac{p^k(1-t^2q^2)(z;q^2)_k(t^2q^2z;q^2)_k}
{(rz;q^2)_{k+1}\left[(1+sqt)(z;q^2)_k-(t^2q^2+sqt)(t^2q^2z;q^2)_k\right]}.\]
\end{cor}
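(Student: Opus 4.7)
The plan is to derive Corollary~\ref{cor 1.9} by specializing Theorem~\ref{thm 1.5} to $N=2$ and then transporting the resulting identity across the involution $\varphi$ on $B_n$ introduced in the paragraph immediately preceding the corollary. I would first specialize Theorem~\ref{thm 1.5} to $N=2$, rename $s_1 = s$, set $r_0 = r$, and set $r_1 = 1$ so that only the positive fixed points are tracked. On the right-hand side, the factor $(s_1 z; q)_l$ in the numerator cancels against $(r_1 s_1 z; q)_l = (sz; q)_l$ in the denominator, trimming the expression to the shape of the summand in Corollary~\ref{cor 1.9}.

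Next, I would apply the variable substitution $q \mapsto q^2$, $t \mapsto t^2$, $s \mapsto qts$ noted earlier for the $N=2$ case of Theorem~\ref{thm 1.5}. Using the identities $\fexc(\pi) = 2\exc(\pi) + \mathrm{neg}(\pi)$ and $\fmaj(\pi) = 2\maj(\pi) + \mathrm{neg}(\pi)$, the weight on the left-hand side collapses to $q^{\fmaj(\pi)} t^{\fexc(\pi)} r^{\fix^+(\pi)} s^{\mathrm{neg}(\pi)} p^{\des^*(\pi)} z^n/(p;q^2)_{n+1}$. A direct check tracking the factors $(1-tq)$, $(z;q)_l$, $(tqz;q)_l$, $(1+s)$, and $(tq+s)$ under the substitution then shows that the simplified right-hand side matches the right-hand side of Corollary~\ref{cor 1.9} verbatim.

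It remains to replace $\des^*$ by $\des_{\typeB}$ on the left. To this end I would reindex the sum by the involution $\varphi$. The properties of $\varphi$ recorded in the preceding paragraph, namely $\varphi^2 = \mathrm{id}$, $\des^*(\varphi(\pi)) = \des_{\typeB}(\pi)$, and the invariance of $\fexc$, $\mathrm{neg}$, and $\fix^+$ under $\varphi$, yield the desired replacement while simultaneously exchanging the flag major index in the $q$-statistic for its twin on $B_n$.

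The main obstacle is precisely this last bookkeeping step: the involution $\varphi$ produces the alternative flag major index (the one defined using the order $-1 < -2 < \cdots < -n < 1 < 2 < \cdots < n$), whereas Corollary~\ref{cor 1.9} is stated using $\fmaj$ defined with the natural order on $\mathbb{Z}$. Both are equidistributed with Coxeter length on $B_n$, so their marginals agree, but one must verify that the full joint distributions with $(\fexc, \mathrm{neg}, \fix^+, \des_{\typeB})$ also agree. The cleanest route is to choose the order used to define $\maj$ on $C_2 \wr S_n$ in Theorem~\ref{thm 1.5} so that after the substitution of step two the left-hand side already carries the alternative flag major index, which $\varphi$ then converts into the $\fmaj$ of the corollary; the remaining manipulations are purely algebraic.
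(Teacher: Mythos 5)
Your proposal follows the same route as the paper: specialize Theorem~\ref{thm 1.5} to $N=2$, set $r_1=1$, apply $q\mapsto q^2$, $t\mapsto t^2$, $s_1\mapsto qts$, simplify the right-hand side (the cancellation of $(s_1z;q)_l$ against $(r_1s_1z;q)_l$ is correct, as is the resulting algebraic form), and then transport the left-hand side across the involution $\varphi$. The algebraic part of the argument is fine.

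However, there is a genuine gap in the final bijective step, and the same gap is latent in the paper's own one-paragraph justification. You claim that $\varphi$ ``exchanges the flag major index in the $q$-statistic for its twin on $B_n$,'' i.e.\ that the alternative flag major index of $\pi$ equals $\fmaj$ of $\varphi(\pi)$; this is false. Take $\pi=(-1,-3,-2)\in B_3$, written as $1^1,3^1,2^1$. After the substitution, the $q$-exponent coming from Theorem~\ref{thm 1.5} is $2\maj(\pi)+\mathrm{neg}(\pi)$, where $\maj$ uses the $\mathcal{E}$-order, namely $2\cdot 2+3=7$ (the only $\mathcal{E}$-descent is at position $2$). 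On the other hand $\varphi(\pi)=(-2,-3,-1)$ has a single natural-order descent at position $1$, so $\fmaj(\varphi(\pi))=2\cdot 1+3=5\neq 7$. Since $\varphi$ correctly transports $\des^*$ to $\des_{\typeB}$ and preserves $\fexc$, $\mathrm{neg}$, and $\fix^+$, the single term it misassigns is precisely the $q$-weight. So the map $\pi\mapsto\varphi(\pi)$ is \emph{not} weight-preserving between the substituted Theorem~\ref{thm 1.5} and the left side of Corollary~\ref{cor 1.9}.

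The corollary itself appears to be true (I checked $n\le 2$ and the all-negative part of $n=3$ agree on both sides), but establishing it requires proving that the \emph{joint} distributions of $(\fmaj_{\mathcal{E}},\des^*,\fexc,\fix^+,\mathrm{neg})$ and $(\fmaj_{\mathbb{Z}},\des_{\typeB},\fexc,\fix^+,\mathrm{neg})$ on $B_n$ coincide, and $\varphi$ alone does not accomplish this. For example, restricted to all-negative $\pi\in B_3$ the weight-preserving correspondence is the conjugation-by-$w_0$ complement rather than $\varphi$. You correctly identified that this equidistribution must be verified; what is missing is the observation that the verification actually fails for $\varphi$, so some additional bijection or argument is needed to close the proof.
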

By setting $t=r=s=1$ and extracting the coefficient of $z^n$ from both sides, Corollary \ref{cor 1.9} reduces to the following Gessel-Chow type B analog of Carlitz's formula. 

\begin{cor}[{\cite[Lemma 5.2]{cg}}] \label{cor 1.10}

\[\sum_{\pi\in B_n}q^{\fmaj(\pi)}p^{\des_{\typeB}(\pi)}=(p;q^2)_{n+1}\sum_{k\geq 0}p^k[2k+1]_q^n.\]

\end{cor}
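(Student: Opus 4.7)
The plan is to derive Corollary~\ref{cor 1.10} as a direct specialization of Corollary~\ref{cor 1.9}, exactly as the author indicates. The first step is to set $t=r=s=1$ in Corollary~\ref{cor 1.9}. With these substitutions the $r$-factor $(rz;q^2)_{k+1}$ in the denominator becomes $(z;q^2)_{k+1}$, while $1+sqt$ becomes $1+q$ and $t^2q^2+sqt$ becomes $q^2+q=q(1+q)$. Writing $1-t^2q^2=(1-q)(1+q)$ in the numerator then lets me cancel a factor of $1+q$, so the right-hand side simplifies to
\[\sum_{k\geq 0}\frac{p^k(1-q)(z;q^2)_k(q^2z;q^2)_k}{(z;q^2)_{k+1}\bigl[(z;q^2)_k-q(q^2z;q^2)_k\bigr]}.\]

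The next step is the main computational step: I want to collapse this to $\sum_{k\geq 0}p^k/(1-z[2k+1]_q)$. Using the factorizations
\[(z;q^2)_{k+1}=(1-z)(q^2z;q^2)_k=(1-zq^{2k})(z;q^2)_k,\]
the ratio $(z;q^2)_k(q^2z;q^2)_k/(z;q^2)_{k+1}$ simplifies to $(z;q^2)_k/(1-z)$. For the bracket, expressing both $(z;q^2)_k$ and $(q^2z;q^2)_k$ as $(z;q^2)_{k+1}$ divided by the appropriate single factor gives
\[(z;q^2)_k-q(q^2z;q^2)_k=\frac{(z;q^2)_{k+1}\bigl[(1-q)-z(1-q^{2k+1})\bigr]}{(1-z)(1-zq^{2k})}.\]
Plugging these back in and cancelling, the summand reduces to
\[\frac{p^k(1-q)}{(1-q)-z(1-q^{2k+1})}=\frac{p^k}{1-z[2k+1]_q},\]
which is the clean form I wanted.

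The final step is bookkeeping. Expand $1/(1-z[2k+1]_q)=\sum_{n\geq 0}z^n[2k+1]_q^n$, interchange the summations, and compare coefficients of $z^n$ with the left-hand side of Corollary~\ref{cor 1.9}. After multiplying through by $(p;q^2)_{n+1}$ this yields exactly
\[\sum_{\pi\in B_n}q^{\fmaj(\pi)}p^{\des_{\typeB}(\pi)}=(p;q^2)_{n+1}\sum_{k\geq 0}p^k[2k+1]_q^n,\]
as desired.

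The only obstacle is the algebraic manipulation in the middle step; it is essentially routine but requires the two $q$-shifted factorial identities above to be applied in the right order so that everything cancels down to the single factor $1-z[2k+1]_q$. Since Corollary~\ref{cor 1.9} is already in hand, no new combinatorial input is needed.
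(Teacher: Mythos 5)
Your proposal is correct and takes essentially the same approach as the paper: both specialize $t=r=s=1$ in Corollary~\ref{cor 1.9} and simplify the right-hand side using the $q$-shifted factorial identities $(q^2z;q^2)_k=(z;q^2)_{k+1}/(1-z)$ and $(z;q^2)_{k+1}=(1-zq^{2k})(z;q^2)_k$ down to $\sum_{k\geq 0}p^k/(1-z[2k+1]_q)$, then extract the coefficient of $z^n$. The only difference is organizational: you rewrite the bracket in a single closed form before cancelling, whereas the paper cancels factors sequentially; the algebra is identical.
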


\begin{proof}

From Corollary \ref{cor 1.9} we have
\[\sum_{\substack{n\geq 0 \\ \pi\in B_n}}\frac{z^n}{(p;q^2)_{n+1}}q^{\fmaj(\pi)}p^{\des_{\typeB}(\pi)}\]
\[=\sum_{k\geq 0}\frac{p^k(1-q^2)(z;q^2)_k(q^2z;q^2)_k}
{(z;q^2)_{k+1}[(1+q)(z;q^2)_k-(q^2+q)(q^2z;q^2)_k}.\]

Note that $(zq^2;q^2)_k=(z;q^2)_{k+1}/(1-z)$, thus
\[\sum_{\substack{n\geq 0 \\ \pi\in B_n}}\frac{z^n}{(p;q^2)_{n+1}}q^{\fmaj(\pi)}p^{\des_{\typeB}(\pi)}\]
\[=\sum_{k\geq 0}\frac{p^k(1-q)(1+q)(z;q^2)_k}
{(1-z)[(1+q)(z;q^2)_k-(q^2+q)(q^2z;q^2)_k}\]
\[=\sum_{k\geq 0}\frac{p^k(1-q)(z;q^2)_k}
{(1-z)(z;q^2)_k-q(z;q^2)_{k+1}}
=\sum_{k\geq 0}\frac{p^k(1-q)}
{(1-z)-q(1-zq^{2k})}\]
\[=\sum_{k\geq 0}\frac{p^k(1-q)}
{(1-q)-z(1-q^{2k+1})}
=\sum_{k\geq 0}\frac{p^k}{1-z[2k+1]_q}\]
\[=\sum_{n,k\geq 0}z^n[2k+1]_q^np^k.\]

Now extract the coefficient of $z^n$ from both sides to obtain the desired formula.

\end{proof}

\vspace{.5cm}
\section{Colored Eulerian quasisymmetric functions and the colored permutation group}

Consider the following ordered alphabet of the $N$-colored integers from 1 to $n$
\[\hspace{-.8cm}\mathcal{E}:=\left\{1^{N-1}<2^{N-1}...<n^{N-1}
<1^{N-2}<2^{N-2}...<n^{N-2}<...
<1^0<2^0<...<n^0\right\}.\]

If $\pi$ is a word over $\mathcal{E}$, we use $\pi(i)=\pi_i$ to denote the $i^{\text{th}}$ letter of the word $\pi$. We let $\left|\pi_i\right|$ denote the positive integer obtained by removing the superscript, and let $\epsilon_i\in\left\{0,1,...,N-1\right\}$ denote the superscript, or color, of the $i^{\text{th}}$ letter of the word. If $\pi$ is word of length $n$ over $\mathcal{E}$, we denote by $\left|\pi\right|$ the word
\[\left|\pi\right|:=\left|\pi_1\right|\left|\pi_2\right|...\left|\pi_n\right|.\]
Consider the wreath product $C_N\wr S_n$, where $C_N$ is the cyclic group of order $N$ and $S_n$ is the symmetric group of order $n!$. The group $C_N\wr S_n$ is also known as the colored permutation group, and can be viewed as the set of words over $\mathcal{E}$ defined by $\pi\in C_N\wr S_n$ iff $\left|\pi\right|\in S_n$.

For example, if we write a word in two-line notation as
\[\pi=\begin{bmatrix} 1 & 2 & 3 & 4 & 5 & \\
3^2 & 5^0 & 4^1 & 1^2 & 2^1\end{bmatrix}\in C_3\wr S_5,\]
then
\[\left|\pi\right|=\begin{bmatrix} 1 & 2 & 3 & 4 & 5 & \\
3 & 5 & 4 & 1 & 2\end{bmatrix}\in S_5.\]
In particular, $\left|\pi_3\right|=4$ and $\epsilon_3=1$ so that $\pi_3=\left|\pi_3\right|^{\epsilon_3}=4^1$.

We can also write colored permutations in cycle notation using the convention that $j^{\epsilon_j}$ follows $i^{\epsilon_i}$ means that $\pi_i=j^{\epsilon_j}$. It is easy to see that a colored permutation decomposes into a product of disjoint cycles. Continuing with the previous example, we can write it in cycle notation as
\[\pi=(1^2,3^2,4^1)(2^1,5^0).\]
Once we define the color vector below, we will be in a position to define a useful refinement of the notion of the cycle type of a colored permutation.

Our next objective is to define all of the relevant colored permutation statistics for $C_N\wr S_n$ that we will use. Using the order on the alphabet $\mathcal{E}$ given above, we define the descent set of $\pi$, denoted $\text{DES}(\pi)$, by 
\[\Des(\pi):=\left\{i\in [n-1]:\pi_i>\pi_{i+1}\right\}.\]
We define descent number and major index of a colored permutation, denoted des and maj respectively, by 
\[\text{des}(\pi):=\left|\text{DES}(\pi)\right|\] 
and 
\[\text{maj}(\pi):=\sum_{i\in\text{DES}(\pi)}i.\]

Similar to the descent set is the starred descent set of a colored permutation denoted by $\Des^*(\pi)$, and defined by
\[\text{DES}^*(\pi):=\left\{\begin{array}{lr}
\text{DES}(\pi) & \text{ if }\epsilon_1=0\\
\text{DES}(\pi)\cup\left\{0\right\} & \text{ if } \epsilon_1>0\end{array}\right..\]
Thus $\text{DES}^*(\pi)\subseteq
\left[n-1\right]\cup\left\{0\right\}$, and we define the starred descent number $\des^*(\pi)$ by
\[\text{des}^*(\pi):=\left|\text{DES}^*(\pi)\right|.\]
We note that when identifying the type B Coxeter group with $C_2\wr S_n$, the definition for the type B descent number uses the order
\[n^1<(n-1)^1<...<1^1<1^0<2^0<...<n^0,\]
where the letters with superscript 1 are identified with negative integers. So while $\des_{\typeB}$ does not agree with $\des^*$ for $N=2$, the simple bijection $\gamma:C_2\wr S_n\rightarrow C_2\wr S_n$ defined by rewriting each string of negative integers in reverse order shows that $\des_{\typeB}$ and $\des^*$ are equidistributed. For example if $\pi=2^0, 1^1, 5^1, 3^0, 8^0, 7^1, 4^1, 6^1$ then $\gamma(\pi)=2^0, 5^1, 1^1, 3^0, 8^0, 6^1, 4^1, 7^1$ and $\des_{\typeB}(\pi)=\des^*(\gamma(\pi))$. We also note that $\des^*$ agrees with the usual descent number for $C_1\wr S_n\cong S_n$.

We define the excedance set of a colored permutation $\pi$, denoted $\Exc(\pi)$, by 
\[\Exc(\pi):=\left\{i\in [n]:\pi_i>i^0\right\}.\]
We define the excedance number $\exc(\pi)$ by
\[\exc(\pi):=|\Exc(\pi)|.\]

Given a nonnegative integer $k$ such that $0\leq k\leq N-1$, we define the $k^{\text{th}}$ color fixed point set of $\pi\in C_N\wr S_n$, denoted $\Fix_k(\pi)$, by 
\[\Fix_k(\pi):=\left\{i\in [n]:\pi_i=i^k\right\}.\]
We define the $k^{\text{th}}$ color fixed point number $\fix_k(\pi)$ by
\[\fix_k(\pi):=|\Fix_k(\pi)|.\]
And we define the fixed point vector $\vec{\text{fix}}(\pi)\in\mathbb{N}^N$ by
\[\vec{\text{fix}}(\pi):=(\text{fix}_0(\pi),\text{fix}_1(\pi),
...,\text{fix}_{N-1}(\pi)).\]

Our last colored permutation statistic is the color vector. Given $m$ such that $1\leq m\leq N-1$, we define the $m^{\text{th}}$ color set of $\pi\in C_N\wr S_n$, denoted $\Col_m(\pi)$, by 
\[\Col_m(\pi):=\left\{i\in [n]:\epsilon_i=m\right\}.\]
We define the $m^{\text{th}}$ color number $\col_m(\pi)$ by
\[\col_m(\pi):=|\Col_m(\pi)|.\]
And we define the color vector $\vec{\text{col}}(\pi)\in\mathbb{N}^{N-1}$ by
\[\vec{\text{col}}(\pi):=(\text{col}_1(\pi),\text{col}_2(\pi),
...,\text{col}_{N-1}(\pi)).\]

For example if
\[\pi=\begin{bmatrix} 1 & 2 & 3 & 4 & 5 & 6 & 7 & 8\\
1^2 & 4^0 & 8^1 & 6^0 & 5^2 & 3^2 & 7^0 & 2^1 \end{bmatrix}\in C_3\wr S_8,\]
then $\Des(\pi)=\left\{2,4,5,7\right\}$, $\des(\pi)=4$, $\des^*(\pi)=5$, $\exc(\pi)=2$, $\vec{\fix}(\pi)=(1,0,2)$, and $\vec{\col}(\pi)=(2,3)$.

Now we define the cv-cycle type (short for color vector cycle type) of a colored permutation $\pi\in C_N\wr S_n$. As noted above, $\pi$ decomposes into a product of disjoint cycles. Let $\lambda=(\lambda_1\geq...\geq\lambda_k)$ be a partition of $n$. Let $\vec{\beta^1},...,\vec{\beta^k}$ be a sequence of vectors in $\mathbb{N}^{N-1}$ with each $\left|\vec{\beta^i}\right|\leq\lambda_i$, where the absolute value of a vector $\vec{\beta}\in\mathbb{N}^{N-1}$ is the sum of its components, i.e. $|\vec{\beta}|:=\beta_1+\beta_2+...+\beta_{N-1}$. Consider the multiset of pairs
\[\check{\lambda}=\left\{(\lambda_1,\vec{\beta^1}),(\lambda_2,\vec{\beta^2}),...,(\lambda_k,\vec{\beta^k})\right\}.\]
We say that $\pi$ has cv-cycle type $\check{\lambda}(\pi)=\check{\lambda}$ if each pair $(\lambda_i,\vec{\beta^i})$ corresponds to exactly one cycle of length $\lambda_i$ with color vector $\vec{\beta^i}$ in the decomposition of $\pi$. Note that $\vec{\text{col}}(\pi)=\vec{\beta^1}+...+\vec{\beta^k}$ using component wise addition. Consider the following example in $C_3\wr S_9$, let
\[\pi=\begin{bmatrix} 1 & 2 & 3 & 4 & 5 & 6 & 7 & 8 & 9 \\
3^2 & 4^1 & 1^1 & 2^2 & 8^0 & 9^2 & 5^1 & 7^1 & 6^0 \end{bmatrix}\]
\[=(1^1, 3^2) (2^2, 4^1) (6^0, 9^2) (5^1, 8^0, 7^1),\]
so $\check{\lambda}(\pi)=\left\{(3,(2,0)), (2,(1,1)), (2,(1,1)), (2,(0,1))\right\}$.

The cv-cycle type is actually a refinement of the cycle type, which determines the conjugacy classes of the colored permutation group. Given $\pi\in C_N\wr S_n$, we say $\pi$ has cycle type $\left\{(\lambda_1,j_1),(\lambda_2,j_2),...,(\lambda_k,j_k)\right\}$ where
\[j_i=\sum_{m=1}^{N-1}m\beta^i_m \mod N.\]
So for example the cycle type of the colored permutation above is $\left\{(3,2),(2,0),(2,0),(2,2)\right\}$. However we will only be using the cv-cycle type in this paper.

Next we want to define a subset of $\left[n-1\right]$ denoted by $\text{DEX}(\pi)$, which is a natural generalization of the set DEX appearing in \cite{sw}. We will use this set in our definition of colored Eulerian quasisymmetric functions. First, we construct a new ordered alphabet 
\[\mathcal{A}:=\left\{\widetilde{1^0}<\widetilde{2^0}<...<\widetilde{n^0}\right\}<\mathcal{E},\] 
where $\mathcal{E}$ has the same order as above, but now the letters with a tilde are less than the letters in $\mathcal{E}$.

Given any colored permutation $\pi\in C_N\wr S_n$, construct a word $\widetilde{\pi}$ of length $n$ over $\mathcal{A}$ as follows: if $i\in\text{EXC}(\pi)$, then replace $\pi_i$ by $\widetilde{\pi_i}$, otherwise leave $\pi_i$ alone. For example if $\pi=2^0,3^2,1^0,6^0,5^0,4^3\in C_4\wr S_6$, then $\text{EXC}(\pi)=\left\{1,4\right\}$ and $\widetilde{\pi}=\widetilde{2^0},3^2,1^0,\widetilde{6^0},5^0,4^3$. Then we define the set
\[\text{DEX}(\pi):=\text{DES}(\widetilde{\pi}),\]
Using the example above we have \[\text{DEX}(2^0,3^2,1^0,6^0,5^0,4^3)=\text{DES}(\widetilde{2^0},3^2,1^0,\widetilde{6^0},5^0,4^3)=\left\{3,5\right\}.\]

\vspace{.25cm}
Let $T\subseteq\left[n-1\right]:=\left\{1,2,...,n-1\right\}$ and recall that the fundamental quasisymmetric function of degree n is given by (see \cite{stanley2})
\[F_{T,n}(\textbf{x}):=\sum_{\substack{i_1\geq i_2\geq...\geq i_n\geq 1\\
i_j>i_{j+1}\text{ if }j\in T}}x_{i_1}x_{i_2}\cdots x_{i_n}.\]

\begin{defn}\label{defn 2.1}

Let $N$ be arbitrary but fixed, and given $n,j\in\mathbb{N},\vec{\alpha}\in\mathbb{N}^{N},\vec{\beta}\in\mathbb{N}^{N-1}$, we define
\[W_{n,j,\vec{\alpha},\vec{\beta}}:=\left\{\pi\in C_N\wr S_n:\exc(\pi)=j,\vec{\fix}(\pi)=\vec{\alpha},\vec{\col}(\pi)=\vec{\beta}\right\}.\]
We then define the \textit{fixed point colored Eulerian quasisymmetric functions} as
\[Q_{n,j,\vec{\alpha},\vec{\beta}}=
Q_{n,j,\vec{\alpha},\vec{\beta}}(\textbf{x}):=\sum_{\pi\in W_{n,j,\vec{\alpha},\vec{\beta}}}F_{\text{DEX}(\pi),n}(\textbf{x}).\] 
Given $j\in\mathbb{N}$ and a particular cv-cycle type $\check{\lambda}=\left\{(\lambda_1,\vec{\beta^1}),(\lambda_2,\vec{\beta^2}),...,(\lambda_k,\vec{\beta^k})\right\}$, we define
\[W_{\check{\lambda},j}:=\left\{\pi\in C_N\wr S_n:\check{\lambda}(\pi)=\check{\lambda},\exc(\pi)=j\right\},\]
where $\lambda_1+\lambda_2+...\lambda_k=n$.

We then define the \textit{cv-cycle type colored Eulerian quasisymmetric functions} by
\[Q_{\check{\lambda},j}=Q_{\check{\lambda},j}(\textbf{x})
:=\sum_{\pi\in W_{\check{\lambda},j}}
F_{\text{DEX}(\pi),n}(\textbf{x}).\]
\end{defn}

\vspace{.25cm}
It is convenient to define $\text{DEX}(\theta)=\emptyset$, $\text{des}(\theta)=\text{des}^*(\theta)=\text{maj}(\theta)=\text{exc}(\theta)=\vec{\text{fix}}(\theta)=\vec{\text{col}}(\theta)=0$, and $F_{\emptyset,0}=1$ where $\theta$ is the empty word. Thus $Q_{0,0,\vec{0},\vec{0}}=1$. We also note that the definitions of $Q_{n,j,\vec{\alpha},\vec{\beta}}$ and $Q_{\check{\lambda},j}$ agree with the definitions of $Q_{n,j,k}$ and $Q_{\lambda,j}$ in \cite{sw}, whenever $\vec{\beta}=\vec{0}$ or each $\vec{\beta^i}$ of $\check{\lambda}$ is the zero vector.

\vspace{.25cm}
Theorems \ref{thm 1.4} and \ref{thm 1.5} are obtained by applying certain ring homomorphisms to Theorem \ref{thm 1.1}. Let $\Lambda$ denote the stable principal specialization. That is, the ring homomorphism from the ring 
of quasisymmetric functions to the ring of formal power series in the variable $q$, defined by 
\[\Lambda(x_i)=q^{i-1}.\]
We will also need $\Lambda_l$, the principal specialization of order $l$ defined by
\[\Lambda_l(x_i)=\left\{\begin{array}{ll}
q^{i-1} & \text{ if } 1\leq i\leq l\\
0 & \text{ if } i>l\end{array}\right..\]

It is known (see \cite[Lemma 5.2]{gr}) that 
\[\Lambda(F_{T,n}(\textbf{x}))=\frac{q^{\sum_{i\in T}i}}{(q;q)_n}\]
and
\[\sum_{l\geq 0}\Lambda_l(F_{T,n}(\textbf{x}))p^l=\frac{p^{\left|T\right|+1}q^{\sum_{i\in T}i}}{(p;q)_{n+1}}.\]
Therefore, we will need the following lemma, whose proof is nearly identical to the proof of \cite[Lemma 2.2]{sw}.

\begin{lemma} \label{lemma 2.1}
For every $\pi\in C_N\wr S_n$ we have
\begin{equation}\left|\Exd(\pi)\right|=\left\{\begin{array}{lr}
\des^*(\pi)-1 & \text{ if }\pi_1\neq 1^0\\
\des^*(\pi) & \text{ if } \pi_1=1^0\end{array}\right.
\label{eq7}
\end{equation}

and
\begin{equation}\sum_{i\in\Exd(\pi)}i=\maj(\pi)-\exc(\pi).
\label{eq1}
\end{equation}
\end{lemma}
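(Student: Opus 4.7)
The plan is to carry out a position-by-position comparison of $\Des(\pi)$ and $\Exd(\pi)$, then combine the results via telescoping sums. First I would record two preliminary facts: (a) if $i \in \Exc(\pi)$ then $\pi_i$ has color $0$, since every letter of positive color lies below $i^0$ in the ordering of $\mathcal{E}$; and (b) $n \notin \Exc(\pi)$, since no letter of $\mathcal{E}$ exceeds $n^0$. For each $i \in [n-1]$ I would split on whether $i$ and $i+1$ belong to $\Exc(\pi)$. When the two positions are both inside or both outside $\Exc(\pi)$, the tildes are either both present or both absent, so the comparison of $\tilde\pi_i$ with $\tilde\pi_{i+1}$ in $\mathcal{A}$ agrees with that of $\pi_i$ with $\pi_{i+1}$ in $\mathcal{E}$; such positions contribute identically to $\Exd$ and $\Des$.

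In the two ``mismatched'' cases the outcome is forced. If $i \in \Exc$ and $i+1 \notin \Exc$ (call this position set $B$), then the tilde on $\pi_i$ makes $\tilde\pi_i < \tilde\pi_{i+1}$ in $\mathcal{A}$, so $i \notin \Exd$; meanwhile $\pi_i = |\pi_i|^0$ with $|\pi_i| \geq i+1$ strictly dominates every possible non-excedance value of $\pi_{i+1}$ (whether $\pi_{i+1}$ has positive color, or is color $0$ with $|\pi_{i+1}| \leq i+1$), forcing $i \in \Des$. A symmetric analysis handles $i \notin \Exc$, $i+1 \in \Exc$ (position set $C$): automatically $i \in \Exd$ and automatically $i \notin \Des$. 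Combining the four cases yields $|\Exd(\pi)| = \des(\pi) - |B| + |C|$ and $\sum_{i \in \Exd(\pi)} i = \maj(\pi) - \sum_{i \in B} i + \sum_{i \in C} i$.

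Writing $\chi_i := \chi(i \in \Exc(\pi))$, a one-line telescoping gives $|B| - |C| = \sum_{i=1}^{n-1}(\chi_i - \chi_{i+1}) = \chi_1 - \chi_n = \chi(1 \in \Exc(\pi))$, and Abel summation yields $\sum_{i \in B} i - \sum_{i \in C} i = \sum_{i=1}^{n-1} i\,(\chi_i - \chi_{i+1}) = \sum_{i=1}^{n} \chi_i = \exc(\pi)$; both computations use $\chi_n = 0$. This immediately gives \eqref{eq1}. For \eqref{eq7} I would split on $\pi_1$: if $\pi_1 = 1^0$ then $\chi_1 = 0$ and $\des^* = \des$; if $\pi_1 = j^0$ with $j > 1$ then $\chi_1 = 1$ and $\des^* = \des$; and if $\epsilon_1 > 0$ then $\chi_1 = 0$ and $\des^* = \des + 1$. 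All three cases collapse to $|\Exd| = \des^* - \chi(\pi_1 \neq 1^0)$, as claimed. The main technical obstacle will be verifying the ``always a descent'' and ``never a descent'' claims for $B$ and $C$: both rely on the interplay between the ordering of $\mathcal{E}$ (color-$0$ letters being largest within each absolute value and dominating all positively colored letters) and the excedance inequality $|\pi_i| > i$, and each requires handling the positive-color subcase for $\pi_{i+1}$ (resp.\ $\pi_i$) separately from the color-$0$ subcase.
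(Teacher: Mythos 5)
Your proposal is correct. It rests on the same fundamental decomposition as the paper's proof: you identify the ``mismatched'' positions $B=\{i: i\in\Exc, i+1\notin\Exc\}$ and $C=\{i: i\notin\Exc, i+1\in\Exc\}$ (the paper's $K$ and $J$ respectively), and establish $\Exd(\pi)=(\Des(\pi)\uplus C)\setminus B$. The divergence is in what you do with this. The paper writes $J=\{j_1<\dots<j_t\}$, $K=\{k_1<\dots<k_s\}$, splits into two cases according to whether $1\in\Exc(\pi)$, observes the forced interleaving pattern ($j_1<k_1<\dots<j_t<k_t$ or $k_1<j_1<\dots<j_t<k_{t+1}$), and in each case reads off $\exc(\pi)$ as an explicit alternating sum by decomposing $\Exc(\pi)$ into intervals. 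You instead introduce $\chi_i=\chi(i\in\Exc(\pi))$ and observe that both quantities of interest are summations against the discrete derivative $\chi_i-\chi_{i+1}$: telescoping gives $|B|-|C|=\chi_1-\chi_n=\chi_1$, and Abel summation gives $\sum_{i\in B}i-\sum_{i\in C}i=\sum_i\chi_i=\exc(\pi)$, both using $\chi_n=0$. This neatly unifies the paper's two cases for equation \eqref{eq1} into a single computation, and recovers $|\Exd(\pi)|=\des(\pi)-\chi_1$ directly. You then finish \eqref{eq7} with the same three-way split on $\pi_1$ that appears in the paper. A further small plus is that you prove the forced descent/non-descent behavior at positions in $B$ and $C$ from scratch (checking the color-$0$ versus positive-color subcases), whereas the paper delegates that step by reference to \cite[Lemma~2.2]{sw}.
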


\begin{proof}
First define the following sets
\[J:=\left\{i\in \left[n-1\right]:i\notin\text{EXC}(\pi)\text{ and }i+1\in\text{EXC}(\pi)\right\},\]
\[K:=\left\{i\in \left[n-1\right]:i\in\text{EXC}(\pi)\text{ and }i+1\notin\text{EXC}(\pi)\right\}.\]
As in the proof of \cite[Lemma 2.2]{sw} we have $K\subseteq \Des(\pi)$ and
\[\Exd(\pi)=\left(\Des(\pi)\biguplus J\right)-K.\]
Let $J=\left\{j_1<...<j_t\right\}$ and $K=\left\{k_1<...<k_s\right\}$ and we consider two cases.

\vspace{.5cm}
Case 1: Suppose $1\notin\text{EXC}(\pi)$.

Since $n$ is never an excedance position, it follows that $t=s$ and $j_1<k_1<j_2<k_2<...<j_t<k_t$, thus
\[\sum_{i\in\text{DEX}(\pi)}i=\sum_{i\in\text{DES}(\pi)}i-\sum_{m=1}^{t}(k_m-j_m).\]
Since
\[\text{EXC}(\pi)=\biguplus_{m=1}^{t}\left\{j_m+1,j_m+2,...,k_m\right\},\]
it follows that
\[\text{exc}(\pi)=\sum_{m=1}^{t}(k_m-j_m)\]
and $\eqref{eq1}$ holds.

\vspace{.5cm}
Case 2: Suppose $1\in\text{EXC}(\pi)$.

This implies that $s=t+1$ and that $k_1<j_1<k_2<...<j_t<k_{t+1}$. Again using the fact that $\text{DEX}(\pi)=\left(\text{DES}(\pi)\biguplus J\right)-K$, we write
\[\sum_{i\in\text{DEX}(\pi)}i=\left(\sum_{i\in\text{DES}(\pi)}i\right)-k_1
-\sum_{m=1}^{t}(k_{m+1}-j_m).\]
Since
\[\text{EXC}(\pi)=\left\{1,2,...,k_1\right\}\biguplus_{m=1}^{t}\left\{j_m+1,j_m+2,...,k_{m+1}\right\},\]
we have
\[\text{exc}(\pi)=k_1+\sum_{m=1}^{t}(k_{m+1}-j_m)\]
and $\eqref{eq1}$ holds again.

To prove $\eqref{eq7}$, first consider the case when $\pi_1>1^0$. As noted above, this implies that $s=t+1$ thus $\left|\text{DEX}(\pi)\right|=\text{des}(\pi)-1=\text{des}^*(\pi)-1$. If $\pi_1=1^0$, then $s=t$ thus $\left|\text{DEX}(\pi)\right|=\text{des}(\pi)=\text{des}^*(\pi)$. If $\pi_1<1^0$, then $s=t$ and $\left|\text{DEX}(\pi)\right|=\text{des}(\pi)=\text{des}^*(\pi)-1$.
\end{proof}

\begin{proof}[Proof of Theorem \ref{thm 1.4}]

Using Lemma \ref{lemma 2.1} and Definition \ref{defn 2.1} we have
\begin{equation}
\Lambda\left(Q_{n,j,\vec{\alpha},\vec{\beta}}\right)
=\sum_{\pi\in W_{n,j,\vec{\alpha},\vec{\beta}}}
\frac{q^{\text{maj}(\pi)}q^{-j}}{(q;q)_n}.
\label{eq5}
\end{equation}

Since $h_n=F_{\emptyset,n}$, it follows that
\begin{equation}
\Lambda\left(H(z(1-q))\right)=\exp_q(z)=
\sum_{n\geq 0}\frac{z^n}{\left[n\right]_q!},
\label{eq6}
\end{equation}
since $\left[n\right]_q!=\frac{(q;q)_n}{(1-q)^n}$. Also, since $e_n=F_{\left[n-1\right],n}$, it follows that
\[\Lambda\left(E(z(1-q))\right)=\text{Exp}_q(z)=\sum_{n\geq 0}\frac{z^nq^{{n\choose 2}}}
{\left[n\right]_q!}.\]
If we then set $z\mapsto z(1-q)$ and $t\mapsto tq$ in Theorem \ref{thm 1.1} and apply $\Lambda$ to both sides, we obtain the desired result.
\end{proof}

Proving Theorem \ref{thm 1.5} takes more work, but is similar to the proofs of \cite[Lemma 2.4 and Corollary 1.4]{sw}.

\begin{proof}[Proof of Theorem \ref{thm 1.5}]

Again, using Lemma \ref{lemma 2.1} and Definition \ref{defn 2.1} we have
\[\sum_{l\geq 0}\Lambda_l\left(Q_{n,j,\vec{\alpha},\vec{\beta}}\right)p^l
=\frac{1}{(p;q)_{n+1}}\sum_{\pi\in W_{n,j,\vec{\alpha},\vec{\beta}}}
p^{\left|\text{DEX}(\pi)\right|+1}q^{\text{maj}(\pi)-j}\]
\[=\frac{1}{(p;q)_{n+1}}\sum_{\substack{\pi\in W_{n,j,\vec{\alpha},\vec{\beta}}\\
\pi(1)\neq 1^0\\}}
p^{\text{des}^*(\pi)}q^{\text{maj}(\pi)-j}
+\frac{p}{(p;q)_{n+1}}\sum_{\substack{\pi\in W_{n,j,\vec{\alpha},\vec{\beta}}\\
\pi(1)=1^0\\}}
p^{\text{des}^*(\pi)}q^{\text{maj}(\pi)-j}.\]

If we define the following quantities
\[X_{n,j,\vec{\alpha},\vec{\beta}}(p,q):=\sum_{l\geq0}\Lambda_l\left(Q_{n,j,\vec{\alpha},\vec{\beta}}\right)p^l,\]
\[Y_{n,j,\vec{\alpha},\vec{\beta}}(p,q):=\frac{1}{(p;q)_{n+1}}\sum_{\substack{\pi\in W_{n,j,\vec{\alpha},\vec{\beta}}\\
\pi(1)=1^0\\}}
p^{\text{des}^*(\pi)}q^{\text{maj}(\pi)-j},\]
\[a_{n,j,\vec{\alpha},\vec{\beta}}(p,q):=\sum_{\pi\in W_{n,j,\vec{\alpha},\vec{\beta}}}
p^{\text{des}^*(\pi)}q^{\text{maj}(\pi)},\]

then they are related by the following equation
\begin{equation}
\frac{a_{n,j,\vec{\alpha},\vec{\beta}}(p,q)}{q^j(p;q)_{n+1}}
=X_{n,j,\vec{\alpha},\vec{\beta}}(p,q)+(1-p)Y_{n,j,\vec{\alpha},\vec{\beta}}(p,q).
\label{eq9}
\end{equation}

Define a bijection
\[\gamma:\left\{\pi\in W_{n,j,\vec{\alpha},\vec{\beta}} : \pi(1)=1^0 \right\} \rightarrow
W_{n-1,j,\vec{\alpha^{(1)}},\vec{\beta}},\]
where $\vec{\alpha^{(1)}}:=(\alpha_0-1,\alpha_1,\alpha_2,...,\alpha_{N-1})$, by setting
\[\gamma(\pi)(i)=\left(\left|\pi(i+1)\right|-1\right)^{\epsilon_{i+1}}\]
for $1\leq i\leq n-1$.

For example, if $\pi=1^0,3^1,2^1,4^2$ in one-line notation, then $\gamma(\pi)=2^1,1^1,3^2$. It is clear that $\gamma$ is well-defined and a bijection, we would also like to know how $\gamma$ changes the starred descent number and the major index. Let $\pi$ be any colored permutation in the domain of $\gamma$. Since $\pi(1)=1^0$, $0\notin\text{DES}^*(\pi)$. If $i\geq 2$, then $i\in\text{DES}^*(\pi)$ iff $i-1\in\text{DES}^*(\gamma(\pi))$. Also, $1\in\text{DES}^*(\pi)$ iff $\pi(2)<1^0$ iff $0\in\text{DES}^*(\gamma(\pi))$. It follows that $\text{des}^*(\pi)=\text{des}^*(\gamma(\pi))$ and $\text{maj}(\pi)=\text{maj}(\gamma(\pi))+\text{des}^*(\pi)$. Thus
\[Y_{n,j,\vec{\alpha},\vec{\beta}}(p,q)=\frac{1}{(p;q)_{n+1}}\sum_{\pi\in W_{n-1,j,\vec{\alpha^{(1)}},\vec{\beta}}}
p^{\text{des}^*(\pi)}q^{\text{maj}(\pi)+\text{des}^*(\pi)-j}\]
\[=\frac{a_{n-1,j,\vec{\alpha^{(1)}},\vec{\beta}}(qp,q)}{q^j(p;q)_{n+1}}.\]

Note that $(1-p)/(p;q)_{n+1}=1/(qp;q)_n$, so that when we substitute this expression for $Y_{n,j,\vec{\alpha},\vec{\beta}}(p,q)$ back in to $\eqref{eq9}$ we get
\[\frac{a_{n,j,\vec{\alpha},\vec{\beta}}(p,q)}{q^j(p;q)_{n+1}}
=X_{n,j,\vec{\alpha},\vec{\beta}}(p,q)+\frac{a_{n-1,j,\vec{\alpha^{(1)}},\vec{\beta}}(qp,q)}{q^j(qp;q)_n}.\]

Let $\vec{\alpha^{(h)}}:=(\alpha_0-h,\alpha_1,\alpha_2,...,\alpha_{N-1})$, so that we can iterate this recurrence relation to obtain
\[\frac{a_{n,j,\vec{\alpha},\vec{\beta}}(p,q)}{q^j(p;q)_{n+1}}
=\sum_{h=0}^{\alpha_0}X_{n-h,j,\vec{\alpha^{(h)}},\vec{\beta}}(q^hp,q).\]

Recalling the definition of $X_{n,j,\vec{\alpha},\vec{\beta}}(p,q)$, we have
\[a_{n,j,\vec{\alpha},\vec{\beta}}(p,q)=q^j(p;q)_{n+1}\sum_{h=0}^{\alpha_0}X_{n-h,j,\vec{\alpha^{(h)}},\vec{\beta}}(q^hp,q)\]
\[=(p;q)_{n+1}\sum_{l\geq 0}p^l\sum_{h=0}^{\alpha_0}
\Lambda_l\left(Q_{n-h,j,\vec{\alpha^{(h)}},\vec{\beta}}\right)q^{hl+j}.\]

Lastly, we need the fact that $\Lambda_l(H(z))=1/(z;q)_l$ and $\Lambda_l(E(z))=(-z;q)_l$ (see \cite{stanley2}) to complete the proof,
\[\sum_{\substack{n\geq 0 \\ \pi\in C_N\wr S_n}}\frac{z^n}{(p;q)_{n+1}}
t^{\text{exc}(\pi)}r^{\vec{\text{fix}}(\pi)}s^{\vec{\text{col}}(\pi)}
q^{\text{maj}(\pi)}p^{\text{des}^*(\pi)}\]
\[=\sum_{\substack{n,j\geq 0\\ \vec{\alpha}\in\mathbb{N}^N \\ \vec{\beta}\in\mathbb{N}^{N-1}}}\frac{z^n}{(p;q)_{n+1}}
t^jr^{\vec{\alpha}}s^{\vec{\beta}} a_{n,j,\vec{\alpha},\vec{\beta}}(p,q)\]
\[=\sum_{\substack{n,j\geq 0\\ \vec{\alpha}\in\mathbb{N}^N \\ \vec{\beta}\in\mathbb{N}^{N-1}}}z^nt^jr^{\vec{\alpha}}s^{\vec{\beta}}\sum_{l\geq 0}p^l
\sum_{h=0}^{\alpha_0}\Lambda_l\left(Q_{n-h,j,\vec{\alpha^{(h)}},\vec{\beta}}\right)q^{hl+j}\]
\[=\sum_{l\geq 0}p^l\sum_{h\geq 0}(zr_0q^l)^h\Lambda_l\left(
\sum_{\substack{n,j\geq 0\\ \vec{\alpha}\in\mathbb{N}^N,\alpha_0\geq h \\ \vec{\beta}\in\mathbb{N}^{N-1}}}Q_{n-h,j,\vec{\alpha^{(h)}},\vec{\beta}}z^{n-h}(tq)^jr^{\vec{\alpha^{(h)}}}s^{\vec{\beta}}\right).\]
By Theorem \ref{thm 1.1} this is equal to
\[=\sum_{l\geq 0}\frac{p^l}{1-zr_0q^l}\Lambda_l\left(
\frac{H(r_0z)(1-tq)\left(\prod_{m=1}^{N-1}E(-s_mz)H(r_ms_mz)\right)}{\left(1+\sum_{m=1}^{N-1}s_m\right)H(tqz)-\left(tq+\sum_{m=1}^{N-1}s_m\right)H(z)}\right)\]
\[=\sum_{l\geq 0}\frac{p^l\left(\frac{1}{(r_0z;q)_l}\right)(1-tq)\left(\prod_{m=1}^{N-1}(s_mz;q)_l\frac{1}{(r_ms_mz;q)_l}\right)}
{(1-zr_0q^l)\left[\left(1+\sum_{m=1}^{N-1}s_m\right)\frac{1}{(tqz;q)_l}-\left(tq+\sum_{m=1}^{N-1}s_m\right)\frac{1}{(z;q)_l}\right]}\]
\[\hspace{-1.5cm}=\sum_{l\geq 0}\frac{p^l(1-tq)(z;q)_l(tqz;q)_l\left(\prod_{m=1}^{N-1}(s_mz;q)_l\right)}
{\left(\prod_{m=1}^{N-1}(r_ms_mz;q)_l\right)(r_0z;q)_{l+1}\left[\left(1+\sum_{m=1}^{N-1}s_m\right)(z;q)_l-\left(tq+\sum_{m=1}^{N-1}s_m\right)(tqz;q)_l\right]}.\]
\end{proof}

\vspace{.25cm}
\section{Colored necklaces and colored ornaments}

As mentioned in the introduction, Sections 3 and 4 will be devoted to the proof of Theorem \ref{thm 1.1}. In this section we introduce colored necklaces and colored ornaments. They are a multicolored generalization of the bicolored necklaces and bicolored ornaments appearing in \cite{sw}, which are in turn generalizations of the monochromatic necklaces and ornaments in \cite{gr}. We construct a bijection which shows that the cv-cycle type colored Eulerian quasisymmetric functions can be expressed as weights of colored ornaments. This bijection and its proof are similar to the bicolored versions in \cite{sw}. We will conclude this section by showing that Theorem \ref{thm 1.1} is equivalent to a certain recurrence relation.

Let $s=(s_1\geq s_2\geq ...\geq s_n)$ be a weakly decreasing sequence of positive integers. Given $\pi\in C_N\wr S_n$, we say that $s$ is $\text{DEX}(\pi)$\textit{-compatible} if $i\in\text{DEX}(\pi)$ implies that $s_i>s_{i+1}$. Then define the set Com$(\check{\lambda},j)$ as follows
\[\hspace{-.2cm}\text{Com}(\check{\lambda},j):=\left\{(\pi,s): \check{\lambda}(\pi)=\check{\lambda},\exc(\pi)=j, \text{ and }s\text{ is } \text{DEX}(\pi)\text{-compatible}\right\}.\]
Define the \textit{weight} of the pair $(\pi,s)$, denoted wt$((\pi,s))$, to be the monomial
\[\text{wt}((\pi,s)):=x_{s_1}x_{s_2}\cdots x_{s_n}.\]
Thus we can express the colored Eulerian quasisymmetric functions as follows
\[Q_{\check{\lambda},j}=\sum_{(\pi,s)\in\text{Com}(\check{\lambda},j)}\text{wt}((\pi,s)).\]

Let $\mathcal{B}$ be an infinite totally ordered alphabet with letters and order given by
\[\mathcal{B}:=\left\{1^0<1^1<...<1^{N-1}<\bar{1^0}
<2^0<2^1<...<2^{N-1}<\bar{2^0}<...\right\}.\]


Let $u$ be any positive integer, we call $\bar{u}$ a \textit{barred} letter, while letters without a bar are called \textit{unbarred}. For $0\leq m \leq N-1$ we say a letter is $m$\textit{-colored} if it is of the form $u^m$, we also say that $\bar{u^0}$ is $0$-colored. Note that only $0$-colored letters may be barred. The \textit{absolute value} of a letter is the positive integer obtained be removing any colors or bars, so $\left|u^m\right|=\left|\bar{u^0}\right|=v$.

Next we review the notion of a circular primitive word. The cyclic group of order $n$ acts on the set of words of length $n$ by cyclic rotation. So if $z$ is a generator of this cyclic group and $v=v_1,v_2,...,v_n$, then $z\cdot v=v_2,v_3,...,v_n,v_1$. A \textit{circular word}, denoted $(v)$, is the orbit of $v$ under this action. A circular word $(v)$ is called \textit{primitive} if the size of the orbit is equal to the length of the word $v$. Equivalently, a word is not primitive if it is a proper power of another word. For example the circular word $(\bar{3^0},3^0,3^2)$ is primitive, while $(4^2,3^1,4^2,3^1)=(4^2,3^1)^2$ is not primitive. One can visualize $(v)$ as a circular arrangement of letters, called a necklace, obtained from $v$ by attaching the first and last letters together. For each position of this necklace one can read the letters in a clockwise direction to obtain an element from the orbit of the circular action (see \cite{gr},\cite{reiner},\cite{sw}).

\begin{defn} \label{defn 3.1}
A \textit{colored necklace} is a circular primitive word $v$ over the alphabet $\mathcal{B}$, such that all the following rules holds

1. Every barred letter is followed by a letter of lesser or equal absolute value.

2. Every $0$-colored unbarred letter is followed by a letter of greater or equal absolute value.

3. Words of length one may not consist of a single barred letter.

\vspace{.25cm}
Note that letters with color greater than zero may be followed by any letter from $\mathcal{B}$. Also note that if $(v)$ is a colored necklace, we define its color vector in the same way we did for colored permutations. That is,
\[\vec{\text{col}}((v))=\vec{\beta}\in\mathbb{N}^{N-1}\]
means that $v$ has exactly $\beta_i$ letters with color $i\in[N-1]$.

If $v=v_1,v_2,...,v_n$, then we define the \textit{weight} of the colored necklace $(v)$, denoted wt$((v))$, to be the monomial
\[\text{wt}((v)):=x_{\left|v_1\right|}x_{\left|v_2\right|}\cdots x_{\left|v_n\right|}.\]

A \textit{colored ornament} is a multiset of colored necklaces. Formally, a colored ornament $R$ is a map with finite support from the set $\eta$ of colored necklaces to $\mathbb{N}$. We define the \textit{weight} of a colored ornament $R$, denoted wt$(R)$, to be
\[\text{wt}(R):=\prod_{(v)\in\eta}\text{wt}((v))^{R((v))}.\]

Similar to the cv-cycle type of a colored permutation, the \textit{cv-cycle type }$\check{\lambda}(R)$ of a colored ornament R is the multiset 
\[\check{\lambda}(R)=\left\{(\lambda_1,\vec{\beta^1}),(\lambda_2,\vec{\beta^2}),
...,(\lambda_k,\vec{\beta^k})\right\}\]
where each colored necklace of $R$ corresponds to precisely one pair $(\lambda_i,\vec{\beta^i})$ where this colored necklace has length $\lambda_i$ and color vector $\vec{\beta^i}$. 
\end{defn}

For example let $N=4$ and
\[R=(\bar{5^0},\bar{5^0},5^2,3^0,3^0,6^1,\bar{7^0}),(3^3,3^1),(3^3,3^1),(4^0,\bar{5^0}),(2^0),(1^3).\]
Then
\[\hspace{-.8cm}\check{\lambda}(R)=\left\{(7,(1,1,0)), (2,(1,0,1)), (2,(1,0,1)), (2,(0,0,0),
(1,(0,0,0)), (1,(0,0,1))\right\}.\]

Let $\mathcal{R}(\check{\lambda},j)$ denote the set of all colored ornaments of cv-cycle type $\check{\lambda}$, and exactly $j$ barred letters. Also, we will often refer to colored necklaces as necklaces, and colored ornaments as ornaments.

\begin{thm} \label{thm 3.2}
There exists a weight preserving bijection $f:\Com(\check{\lambda},j)\rightarrow\mathcal{R}(\check{\lambda},j)$.
\end{thm}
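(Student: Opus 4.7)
The plan is to extend the Gessel--Reutenauer \cite{gr} bijection, in its bicolored refinement due to Shareshian--Wachs \cite{sw}, to the multicolored alphabet $\mathcal{B}$, proceeding cycle by cycle. Given $(\pi, s) \in \Com(\check\lambda, j)$, write each cycle of $\pi$ as $(a_1, a_2, \ldots, a_k)$ with $\pi_{a_r} = a_{r+1}^{\epsilon_{a_r}}$ (cyclic indices), and assign to it the circular word $(v_1, \ldots, v_k)$ over $\mathcal{B}$ defined by
\[ v_r = \begin{cases} \bar{s_{a_r}^{\,0}} & \text{if } a_r \in \Exc(\pi), \\ s_{a_r}^{\,\epsilon_{a_r}} & \text{otherwise}. \end{cases} \]
If $a_r \in \Exc(\pi)$ then $\epsilon_{a_r}$ is forced to be $0$, so the bar is legal. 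Set $f(\pi, s)$ to be the multiset of these circular words over the cycles of $\pi$.

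The first check is that $f$ lands in $\mathcal{R}(\check\lambda, j)$. Weight preservation is immediate from $|v_r| = s_{a_r}$. Cv-cycle type matches cycle by cycle because the word has the same length as the cycle, and the multiset $\{\epsilon_{a_1}, \ldots, \epsilon_{a_k}\}$, which equals the cycle's color vector, also records the colors of the $v_r$ (excedance positions only acquire a bar, still color $0$). The total bar count equals $|\Exc(\pi)| = j$. The necklace axioms (1)--(3) then follow from the weak decrease of $s$ alone: if $v_r$ is barred, then $a_{r+1} = |\pi_{a_r}| > a_r$ forces $s_{a_r} \geq s_{a_{r+1}}$; if $v_r$ is $0$-colored unbarred, then $\pi_{a_r} = a_{r+1}^{\,0}$ with $a_{r+1} \leq a_r$, so $s_{a_r} \leq s_{a_{r+1}}$; and a $1$-cycle is necessarily a fixed point, which is never an excedance.

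The inverse $g$ would be the standard Gessel--Reutenauer reconstruction transported to $\mathcal{B}$: totally order the $n$ letters of $R$ by first comparing them as elements of $\mathcal{B}$ and, in case of ties, by comparing the infinite cyclic words read forward from each letter lexicographically in $\mathcal{B}$. Assigning positions $1, 2, \ldots, n$ to the letters in weakly decreasing order of this rank, one takes $s_i$ to be the absolute value of the letter at position $i$ and defines $\pi$ so that each necklace becomes a cycle of $\pi$, with $\epsilon_i$ and the flag $i \in \Exc(\pi)$ read off from the color and bar data at position $i$.

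The main obstacles are two. First, one must show that every output word of $f$ is primitive, so that $f(\pi, s)$ is genuinely a multiset of necklaces: a nontrivial period $d < k$ in the output would produce equalities $s_{a_r} = s_{a_{r+d}}$ together with matching bar and color structure that, tracked through the tilding that defines $\Exd$, yield an index $i \in \Exd(\pi)$ with $s_i = s_{i+1}$, contradicting DEX-compatibility. Second, one must verify that the sequence $s = g(R)$ recovered from the ranking is $\Exd(g(R))$-compatible, which rests on the same principle in reverse: primitivity of the necklaces in $R$ makes the tie-breaking in the total order strict exactly at positions of $\Exd$. Once both are in place, $f \circ g$ and $g \circ f$ are identity maps by a direct unwinding of the definitions, parallel to the arguments of \cite{sw}; the main novelty over \cite{sw} is the multicolored bookkeeping, in which only $0$-colored letters are subject to rules (1) and (2), while higher-colored letters are freely placed in the cyclic order.
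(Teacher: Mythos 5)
Your map $f$ is exactly the paper's (assign to each cycle $(a_1,\ldots,a_k)$ the circular word $(v_1,\ldots,v_k)$ with $v_r = \overline{s_{a_r}^{\,0}}$ at excedance positions and $v_r = s_{a_r}^{\,\epsilon_{a_r}}$ otherwise), and your verification of rules 1--3 from the weak decrease of $s$ is clean and a bit more direct than the paper's, which first establishes the stronger claim that the whole sequence $\alpha$ is weakly decreasing in the $\mathcal B$-order. Your inverse $g$ is also the same ranking-by-infinite-suffix construction, modulo one omission: you need to fix a total order on repeated copies of a necklace to break the genuine ties $w_x = w_y$ that occur across identical necklaces (and then check, as the paper does, that $g$ is independent of that choice).

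The real gap is in the primitivity step. You say a nontrivial period $d$ would ``yield an index $i \in \mathrm{DEX}(\pi)$ with $s_i = s_{i+1}$, contradicting DEX-compatibility,'' but this is not what happens. Periodicity $\alpha_{a_1} = \alpha_{a_{1+d}}$ together with the weak decrease of $\alpha$ (which \emph{uses} DEX-compatibility) forces $\alpha$ to be \emph{constant} on $[a_1, a_{1+d}]$, which means $\mathrm{DEX}(\pi)$ misses that whole interval; so no index of DEX is created, and there is nothing to contradict at that stage. The contradiction instead comes from an iteration: the plateau forces $\sigma$ (i.e.\ $|\pi|$) to be strictly increasing on $[a_1, a_{1+d}]$, hence $a_2 = \sigma(a_1) < \sigma(a_{1+d}) = a_{2+d}$; repeating this argument around the cycle eventually yields $a_{k-d+1} < a_1$, contradicting the minimality of $a_1$ in its cycle. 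This iteration is the essential content of the primitivity proof (and, symmetrically, of showing that $f$ and $g$ are mutually inverse via the monotone comparison of the words $w_{p_i}$), so as written your sketch would not close. Everything else matches the paper's route.
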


\begin{proof}
Let $(\pi,s)\in\text{Com}(\check{\lambda},j)$ where $s=(s_1,s_2,...,s_n)$. First we map $(\pi,s)$ to the pair $(\sigma,\alpha)$ where $\sigma\in S_n$ and $\alpha$ is a weakly decreasing sequence of $n$ letters from $\mathcal{B}$. We let $\sigma=\left|\pi\right|$, and we obtain $\alpha$ from $s$ by replacing each $s_i$ with one of the following
\[s_i\mapsto \overline{s_i^0} \hspace{.5cm}\text{ if }i\in\text{EXC}(\pi),\]
\[\hspace{2.25cm}s_i\mapsto s_i^m \hspace{.5cm}\text{ if }\epsilon_i=m\text{ and }i\notin\text{EXC}(\pi).\]
Then for each cycle $(i_1,...,i_k)$ appearing in $\sigma$, add the necklace $(\alpha_{i_1},...,\alpha_{i_k})$ to the multiset $f((\pi,s))$.

When doing an example, it helps to write the identity permutation as word on top, below that the word for the colored permutation $\pi$, and below that the sequence $s$, as follows
\[\begin{array}{cc} \text{Id} & = \\ \pi & = \\ \widetilde{\pi} & = \\ s & = \end{array}
\begin{array}{llllllll}
1 & 2 & 3 & 4 & 5 & 6 & 7 & 8 \\
8^1 & 3^0 & 2^2 & 5^0 & 1^2 & 6^1 & 4^0 & 7^0 \\
8^1 & \widetilde{3^0} & 2^2 & \widetilde{5^0} & 1^2 & 6^1 & 4^0 & 7^0 \\
6 & 5 & 5 & 4 & 4 & 4 & 4 & 3\end{array}.\]
One can check that $\text{DEX}(\pi)=\left\{1,3\right\}$ so that $s$ is $\text{DEX}(\pi)$-compatible (note that $s$ has an optional decrease from $s_7$ to $s_8$). Then
\[\sigma=(1, 8, 7, 4, 5)(2, 3)(6),\]
\[\alpha= 6^1, \bar{5^0}, 5^2, \bar{4^0}, 4^2, 4^1, 4^0, 3^0,\]
and
\[f((\pi,s))=(6^1, 3^0, 4^0, \bar{4^0}, 4^2), (\bar{5^0}, 5^2), (4^1).\]

It is clear that $f$ preserves cv-cycle type, weight, and the number of excedances of $\pi$ is equal to the number of barred letters in $f((\pi,s))$. Since $f$ preserves cv-cycle type, and since fixed points of any color cannot be excendances, it is also clear that the necklaces in $f((\pi,s))$ obey rule 3 in Definition \ref{defn 3.1}. To prove that rules 1 and 2 are also obeyed, we first prove the following 

\underline{Claim:}
$\alpha$ is a weakly decreasing sequence with respect to the order on $\mathcal{B}$.

Indeed, since $\left|\alpha_i\right|=s_i$, we know that $\left|\alpha_i\right|=s_i \geq s_{i+1}=\left|\alpha_{i+1}\right|$. So suppose $s_i=s_{i+1}$ and $\alpha_i=s_i^{m_1}$ while $\alpha_{i+1}=\overline{s_{i+1}^0}$. This means that $i\notin\text{EXC}(\pi)$ while $i+1\in\text{EXC}(\pi)$. Thus $i\in\text{DEX}(\pi)$ but $s_i=s_{i+1}$, contradicting that $s$ is $\text{DEX}(\pi)$-compatible. Also, if $\alpha_{i+1}=s_{i+1}^{m_2}$ with $m_1<m_2$, then $i$ is again an element of $\text{DEX}(\pi)$. This proves the claim.

To check that rule 1 is obeyed, suppose $\alpha_i$ is a barred letter. Then if $\sigma(i)=j$, we must have $i<j$. By the claim above, $\alpha_i\geq\alpha_j$. To check rule 2, suppose $\alpha_i$ is $0$-colored and unbarred. Then $\sigma(i)=j$ with $i>j$ and the claim tells us that $\alpha_i\leq\alpha_j$.

To show that $f$ is well-defined, it remains to show that each word in $f((\pi,s))$ is primitive. Suppose $(\alpha_{i_1},\alpha_{i_2},...,\alpha_{i_k})$ is a nonprimitive necklace in $f((\pi,s))$ obtained from the cycle $(i_1,i_2,...,i_k)$ of $\sigma$, where $i_1$ is the smallest element of the cycle. Thus for some divisor $d$ of $k$ we have $(\alpha_{i_1},\alpha_{i_2},...,\alpha_{i_k})=(\alpha_{i_1},\alpha_{i_2},...,\alpha_{i_d})^{k/d}$. In particular we have
\begin{equation} \alpha_{i_1}=\alpha_{i_{d+1}} \text{ and } i_1<i_{d+1}.
\label{eq2}
\end{equation}
Since the sequence $\alpha$ is weakly decreasing, this implies that $\alpha_i=\alpha_{i_1}$ for all $i\in B:=\left\{i:i_1\leq i \leq i_{d+1}\right\}$, and $B\cap\text{DEX}(\pi)=\emptyset$. Moreover, either $B\cap\text{EXC}(\pi)=B$ or $\emptyset$, and $\epsilon_i=\epsilon_{i_1}$ for all $i\in B$. So in fact $B\cap\text{DES}(\pi)=\emptyset$ and
\begin{equation} \sigma(i_1)<\sigma(i_1+1)<\sigma(i_1+2)<...<\sigma(i_{d+1}).
\label{eq3}
\end{equation}

From $\eqref{eq3}$, we find that $i_2=\sigma(i_1)<\sigma(i_{d+1})=i_{d+2}$. Since $(\alpha_{i_1},\alpha_{i_2},...,\alpha_{i_k})=(\alpha_{i_1},\alpha_{i_2},...,\alpha_{i_d})^{k/d}$, we now have $\alpha_{i_2}=\alpha_{i_{d+2}}$ with $i_2<i_{d+2}$, similar to $\eqref{eq2}$. The same argument will show $i_3=\sigma(i_2)<\sigma(i_{d+2})=i_{d+3}$, and we can repeat this argument until $i_{k-d+1}=\sigma(i_{k-d})<\sigma(i_k)=i_1$, contradicting the minimality of $i_1$.

Thus far we have proved that $f:\text{Com}(\check{\lambda},j)\rightarrow\mathcal{R}(\check{\lambda},j)$ is well-defined. Next, we will describe the inverse map $g:\mathcal{R}(\check{\lambda},j)\rightarrow\text{Com}(\check{\lambda},j)$ and show that it is well-defined. Let $R\in\mathcal{R}(\check{\lambda},j)$ and if $R$ has any repeated necklaces, fix some total order on these repeated necklaces. For each position $x$ of each necklace, let $w_x$ denote the infinite word obtained by reading the necklace clockwise starting at position $x$. Let $w_x>w_y$ mean that $w_x$ is lexicographically larger than $w_y$, using the order on $\mathcal{B}$. If $w_x=w_y$ for distinct positions $x,y$, then it must be that $x,y$ are positions in distinct copies of a repeated necklace, since words are primitive. We can then break the tie using the total order on repeated necklaces.

This totally orders all the positions on all of the necklaces of $R$ by letting $x>y$ iff 

\vspace{.25cm}
(1) $w_x>w_y$

or
 
(2) $w_x=w_y$ and $x$ is in a necklace which is larger in the total order on these repeated necklaces.
\vspace{.25cm}

If $x$ is the $i^{\text{th}}$ largest position of $R$, then we replace the letter in position $x$ by $i$. After doing this for each position, we have a permutation denoted $\sigma(R)\in S_n$ written in cycle form. We then obtain a colored permutation denoted $\pi(R)$ by setting $\pi(R)(i)=(\sigma(R)(i))^{\epsilon_x}$ where $\epsilon_x$ is the color of the letter formerly occupying position $x$. A sequence $s(R)$ is obtained by simply taking the weakly decreasing rearrangement of the absolute values of all the letters appearing in $R$. We then set $g(R)=(\pi(R),s(R))$.

For example, consider the following ornament
\[\begin{array}{cccccccccccccc}
R=&(5^1,&\bar{5^0},&3^0)&<&(5^1,&\bar{5^0},&3^0),&(4^1,&3^0,&3^1,&4^1),&(3^1),&(3^0)
\end{array}.\]
By ranking each position, we obtain $\sigma(R)$ as follows
\[\begin{array}{rlllllllllllll}
R=&(5^1,&\bar{5^0},&3^0)&<&(5^1,&\bar{5^0},&3^0),&(4^1,&3^0,&3^1,&4^1),&(3^1),&(3^0) \\
\sigma(R) =&(4,&2,&10)& &(3,&1,&9)&(6,&11,&7,&5)&(8)&(12)
\end{array}.\]
So $g(R)$ is the pair
\[\begin{array}{rllllllllllll}
\text{Id}= & 1 & 2 & 3 & 4 & 5 & 6 & 7 & 8 & 9 & 10 & 11 & 12 \\
\pi(R)= & 9^0 & 10^0 & 1^1 & 2^1 & 6^1 & 11^1 & 5^1 & 8^1 & 3^0 & 4^0 & 7^0 & 12^0 \\
s(R)= & 5 & 5 & 5 & 5 & 4 & 4 & 3 & 3 & 3 & 3 & 3 & 3
\end{array}.\]

It is easy to see that $\check{\lambda}(\pi(R))=\check{\lambda}$, and that $g$ does not depend on the ordering of repeated necklaces in $R$. Also, it follows from rules 1 and 2 in Definition \ref{defn 3.1} that if $x$ is the $i^{\text{th}}$ largest position, then the letter in position $x$ is barred iff $i\in\text{EXC}(\pi(R))$, thus $\text{exc}(\pi(R))=j$.

To show that $g$ is well-defined, it remains to show that the sequence $s$ is $\text{DEX}(\pi)$-compatible. Suppose $s_i=s_{i+1}$. Let $x$ be the $i^{\text{th}}$ largest position in $R$, and $y$ be the $(i+1)^{\text{th}}$ largest position, in particular $x>y$. Given any word $w$, let $F(w)$ denote the first letter of the word. So $s_i=s_{i+1}$ means that $\left|F(w_x)\right|=\left|F(w_y)\right|$. If $F(w_x)>F(w_y)$, then one can easily check that $i\notin\text{DEX}(\pi)$ as desired.

So assume $F(w_x)=F(w_y)$. Let $u$ denote the position immediately following $x$ cyclicly, and let $v$ denote the position immediately following $y$. Since $x>y$ it follows that $u>v$. Since $\sigma(R)(i)$ is equal to the rank of position $u$, and $\sigma(R)(i+1)$ is is equal to the rank of position $v$, we have $\sigma(R)(i)<\sigma(R)(i+1)$. Since $F(w_x)=F(w_y)$, then $\epsilon_x=\epsilon_y$ and this implies that $i\notin\Des(\pi(R))$. Moreover, since either $i,i+1\in\text{EXC}(\pi(R))$ or $i,i+1\notin\text{EXC}(\pi(R))$ we have $i\notin\text{DEX}(\pi(R))$. Thus the map $g$ is well-defined.

The proof of Theorem \ref{thm 3.2} will be complete once we show that $f\circ g=g\circ f=\text{id}$. It not hard to see that $f\circ g=\text{id}$, and that if we apply $g\circ f$ to $(\pi,s)$ we will recover the sequence $s$. So we need to prove that applying $g\circ f$ to $(\pi,s)$ will also bring us back to the colored permutation $\pi$.

Let $(\pi,s)\mapsto(\sigma,\alpha)$ in the first step of $f$, and let $p_i$ be the position occupied by $\alpha_i$ in $f((\pi,s))$. Order the cycles of $\sigma$ from largest to smallest so that the minimum elements of the cycles increase. Use this to order repeated necklaces in $f((\pi,s))$ so that we know how to break ties if $w_{p_i}=w_{p_j}$. We want to show the following:

\vspace{.5cm}
(i) if $i<j$, then $w_{p_i}\geq w_{p_j}$,

\vspace{.5cm}
and

\vspace{.5cm}
(ii) if $i<j$ and $w_{p_i}=w_{p_j}$, then $i$ is in a cycle of $\sigma$ whose minimum element is less than the minimum element of the cycle containing $j$.

\vspace{.5cm}
In order to prove both (i) and (ii), we first establish that

\vspace{.5cm}
(iii) If $i<j$ and $w_{p_i}\leq w_{p_j}$, then $\alpha_i=\alpha_j$ and $\sigma(i)<\sigma(j)$.

\vspace{.5cm}
Indeed, $\alpha$ is weakly decreasing so that $i<j$ implies $\alpha_i\geq\alpha_j$. And $w_{p_i}\leq w_{p_j}$ implies that $\alpha_i=F(w_{p_i})\leq F(w_{p_j})=\alpha_j$, so $\alpha_i=\alpha_j$. This implies that $\alpha_i=\alpha_{i+1}=...=\alpha_j$, which means $s_i=s_{i+1}=...=s_j$, $\epsilon_i=\epsilon_{i+1}=...=\epsilon_j$, and all the letters  $\alpha_i,\alpha_{i+1},...,\alpha_j$ are either all barred or all unbarred. Since $s$ is $\Exd(\pi)$-compatible, $k\notin\text{DEX}(\pi)$ for $i\leq k\leq j-1$. This implies that $\left|\pi(i)\right|<\left|\pi(i+1)\right|<...<\left|\pi(j)\right|$, which means that $\sigma(i)<\sigma(i+1)<...<\sigma(j)$. This establishes (iii).

To prove (i), suppose $i<j$ but $w_{p_i}<w_{p_j}$. Using (iii), we have $\sigma(i)<\sigma(j)$ and $\alpha_i=\alpha_j$. Since $F(w_{p_i})=F(w_{p_j})$, we must have $w_{p_{\sigma(i)}}<w_{p_{\sigma(j)}}$. Now apply (iii) again with $\sigma(i),\sigma(j)$ taking the role of $i,j$. Then $\sigma^2(i)<\sigma^2(j)$ and $\alpha_{\sigma(i)}=\alpha_{\sigma(j)}$, which implies $w_{p_{\sigma^2(i)}}<w_{p_{\sigma^2(j)}}$. Apply (iii) again to obtain $\sigma^3(i)<\sigma^3(j)$, $\alpha_{\sigma^2(i)}=\alpha_{\sigma^2(j)}$ and $w_{p_{\sigma^3(i)}}<w_{p_{\sigma^3(j)}}$. By repeating this argument, we see that $\alpha_{\sigma^m(i)}=\alpha_{\sigma^m(j)}$ for all $m$, but this implies that $w_{p_i}=w_{p_j}$, a contradiction.

To prove (ii), suppose $i<j$ and $w_{p_i}=w_{p_j}$. Using (iii) we have $\sigma(i)<\sigma(j)$, and $w_{p_i}=w_{p_j}$ implies $w_{p_{\sigma(i)}}=w_{p_{\sigma(j)}}$. Applying (iii) again we have $\sigma^2(i)<\sigma^2(j)$ and $w_{p_{\sigma^2(i)}}=w_{p_{\sigma^2(j)}}$. Repeating this argument, we have $\sigma^m(i)<\sigma^m(j)$ for all $m$. Thus the cycle of $\sigma$ containing $i$ has a smaller minimum element, than the cycle containing $j$.

This completes the proof that $f:\text{Com}(\check{\lambda},j)\rightarrow\mathcal{R}(\check{\lambda},j)$ is a bijection.
\end{proof}

Previously, we had expressed the cv-cycle type colored Eulerian quasisymmetric functions as a sum of weights of pairs $(\pi,s)$. Using Theorem \ref{thm 3.2} we can now express is it as a sum of weights of ornaments. 

\begin{cor} \label{cor 3.3}
\[Q_{\check{\lambda},j}=\sum_{(\pi,s)\in\Com(\check{\lambda},j)}\wt((\pi,s))
=\sum_{R\in\mathcal{R}(\check{\lambda},j)}\wt(R).\]
\end{cor}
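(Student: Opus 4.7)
The corollary really is a direct bookkeeping consequence of the definitions together with Theorem~\ref{thm 3.2}, so the proof plan is short. My plan is to prove each equality separately: the first equality by unraveling definitions, and the second by invoking the bijection $f$ that has just been constructed.

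For the first equality, I would start from the defining expression
\[Q_{\check{\lambda},j}=\sum_{\pi\in W_{\check{\lambda},j}}F_{\Exd(\pi),n}(\xx),\]
and then substitute the definition of the fundamental quasisymmetric function, namely
\[F_{\Exd(\pi),n}(\xx)=\sum_{\substack{s_1\geq s_2\geq\cdots\geq s_n\geq 1\\ s_i>s_{i+1}\ \text{if}\ i\in\Exd(\pi)}}x_{s_1}x_{s_2}\cdots x_{s_n}.\]
The inner condition on $s$ is exactly the condition that $s$ be $\Exd(\pi)$-compatible, and the monomial $x_{s_1}\cdots x_{s_n}$ is by definition $\wt((\pi,s))$. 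Exchanging the order of summation turns the double sum into a single sum over pairs $(\pi,s)$ with $\pi\in W_{\check{\lambda},j}$ and $s$ a $\Exd(\pi)$-compatible weakly decreasing sequence, i.e.\ exactly $\Com(\check{\lambda},j)$. This yields
\[Q_{\check{\lambda},j}=\sum_{(\pi,s)\in\Com(\check{\lambda},j)}\wt((\pi,s)).\]

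For the second equality, I would apply Theorem~\ref{thm 3.2}, which provides a weight-preserving bijection $f\colon\Com(\check{\lambda},j)\to\mathcal{R}(\check{\lambda},j)$. Weight-preservation means $\wt((\pi,s))=\wt(f((\pi,s)))$ for every pair, and the bijectivity lets us reindex the sum by $R=f((\pi,s))$ to obtain
\[\sum_{(\pi,s)\in\Com(\check{\lambda},j)}\wt((\pi,s))=\sum_{R\in\mathcal{R}(\check{\lambda},j)}\wt(R),\]
completing the proof.

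There is really no hard step here: all the serious work has already been done in verifying that $f$ is a well-defined weight-preserving bijection (Theorem~\ref{thm 3.2}). The only mild subtlety is making sure that the condition appearing inside $F_{\Exd(\pi),n}$ is recognized as the \emph{same} compatibility condition used in defining $\Com(\check{\lambda},j)$, but this is immediate from the definitions given before the statement.
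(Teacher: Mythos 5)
Your proof is correct and matches the paper's approach: the first equality is exactly the observation the paper makes immediately before Theorem~\ref{thm 3.2} (unfolding the definition of $F_{\Exd(\pi),n}$ and recognizing the $\Exd(\pi)$-compatibility condition), and the second is the direct consequence of the weight-preserving bijection $f$ from Theorem~\ref{thm 3.2}.
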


\begin{remark} \label{remark1}
It is possible to use Corollary \ref{cor 3.3} to prove that $Q_{\check{\lambda},j}$ is also a symmetric function. One method is to use the ornament description of $Q_{\check{\lambda},j}$ to derive a colored analog of \cite[Corollary 6.1]{sw}, which involves plethysm (see \cite{wachs}). Another possible method is a bijective approach as in \cite[Theorem 5.8]{sw}. We plan to presents the details of both proofs in a forthcoming paper.
\end{remark}

From Corollary \ref{cor 3.3}, we obtain the following results concerning the fixed point colored Eulerian quasisymmetric functions.

\begin{cor} \label{cor 3.4}
\[Q_{n,j,\vec{\alpha},\vec{\beta}}=Q_{n-\left|\vec{\alpha}\right|,j,\vec{0},\vec{\beta}-(\alpha_1,\alpha_2,...,\alpha_{N-1})}\prod_{k=0}^{N-1}h_{\alpha_k},\]
where $\left|\vec{\alpha}\right|:=\sum_{k=0}^{N-1}\alpha_k$, and recall that $h_{\alpha_k}$ is the complete homogeneous symmetric function of degree $\alpha_k$.
\end{cor}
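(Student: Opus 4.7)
The plan is to pass from colored permutations to colored ornaments via Corollary~\ref{cor 3.3}, and then split off the fixed points as length-one necklaces. First, I would sum Corollary~\ref{cor 3.3} over all cv-cycle types $\check{\lambda}$ that have exactly $\alpha_0$ parts equal to $(1,\vec{0})$ and exactly $\alpha_k$ parts equal to $(1,\vec{e}_k)$ for $1\le k\le N-1$, with the remaining parts of length $\ge 2$ summing in color to $\vec{\beta} - (\alpha_1,\dots,\alpha_{N-1})$. This expresses $Q_{n,j,\vec{\alpha},\vec{\beta}}$ as $\sum_R \wt(R)$, where $R$ runs over all ornaments of total length $n$ with $j$ barred letters, total color vector $\vec{\beta}$, and exactly $\alpha_k$ length-one necklaces of color $k$ (noting that rule~3 of Definition~\ref{defn 3.1} forces every length-one necklace to be unbarred, and under the bijection of Theorem~\ref{thm 3.2} these length-one necklaces correspond precisely to the fixed points of the colored permutation).

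Next, I would decompose each such ornament uniquely as $R = R_{\ge 2} \uplus R_1$, where $R_1$ collects the length-one necklaces and $R_{\ge 2}$ the necklaces of length $\ge 2$. Since fixed points are never excedances, all $j$ barred letters must lie in $R_{\ge 2}$; hence $R_{\ge 2}$ is an ornament of total length $n - |\vec{\alpha}|$, with $j$ barred letters, total color vector $\vec{\beta} - (\alpha_1,\dots,\alpha_{N-1})$, and no length-one parts. Invoking Corollary~\ref{cor 3.3} in reverse identifies $\sum_{R_{\ge 2}} \wt(R_{\ge 2})$ with $Q_{n-|\vec{\alpha}|,\, j,\, \vec{0},\, \vec{\beta} - (\alpha_1,\dots,\alpha_{N-1})}$, since the vanishing of $\vec{\fix}$ on the right-hand side is exactly what rules out length-one cycles under the bijection.

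Finally, $R_1$ consists, for each color $k$, of an independent multiset of $\alpha_k$ length-one necklaces of that color, each of the form $(u^k)$ with $u \in \mathbb{P}$ and weight $x_u$. Summing over multisets of size $\alpha_k$ produces $h_{\alpha_k}(\mathbf{x})$, so $\sum_{R_1} \wt(R_1) = \prod_{k=0}^{N-1} h_{\alpha_k}$, and multiplying the two independent generating functions yields the claimed factorization. There is no serious obstacle; the only bookkeeping point is tracking how the color vector decreases when length-one necklaces of positive color are pulled out, and that reduction is exactly $(\alpha_1,\dots,\alpha_{N-1})$, matching the residual color index on the right-hand side.
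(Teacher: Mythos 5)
Your proof is correct and follows exactly the route the paper intends; the paper merely asserts Corollary~\ref{cor 3.4} as a consequence of Corollary~\ref{cor 3.3} without spelling out the argument, and what you've written in is the standard ornament-splitting: isolate the length-one necklaces (necessarily unbarred by rule~3, so they carry no excedances), observe that their weight generating function factors as $\prod_{k=0}^{N-1}h_{\alpha_k}$, and identify the complementary sub-ornament with $Q_{n-|\vec{\alpha}|,j,\vec{0},\vec{\beta}-(\alpha_1,\dots,\alpha_{N-1})}$. The bookkeeping of the color vector is handled correctly, and the independence of the two factors follows from the fact that ornaments are unconstrained multisets of necklaces with additive invariants.
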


\begin{cor} \label{cor 3.5}
Theorem \ref{thm 1.1} is equivalent to
\begin{equation}
\sum_{\substack{n,j\geq 0 \\ \vec{\beta}\in\mathbb{N}^{N-1}}} Q_{n,j,\vec{0},\vec{\beta}}z^nt^js^{\vec{\beta}}=\frac{(1-t)\prod_{m=1}^{N-1}E(-s_mz)}{\left(1+\sum_{m=1}^{N-1}s_m\right)H(tz)-\left(t+\sum_{m=1}^{N-1}s_m\right)H(z)}.
\label{eq cor 3.5}
\end{equation}
\end{cor}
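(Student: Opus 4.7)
The plan is to use Corollary~\ref{cor 3.4} to strip the $\vec{\alpha}$-dependence from the left-hand side of Theorem~\ref{thm 1.1} and then to observe that the variables $r_0,r_1,\ldots,r_{N-1}$ enter both sides of Theorem~\ref{thm 1.1} only through the common factor $H(r_0z)\prod_{m=1}^{N-1}H(r_ms_mz)$. Dividing this factor out will yield \eqref{eq cor 3.5}, and because the factor is a nonzero formal power series, the derivation is reversible.

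First I substitute Corollary~\ref{cor 3.4} into the LHS of Theorem~\ref{thm 1.1}. A fixed point of color $k\geq 1$ is itself a letter of color $k$, so $Q_{n,j,\vec{\alpha},\vec{\beta}}=0$ unless $\beta_k\geq\alpha_k$ for each $1\leq k\leq N-1$; this justifies the change of variables $n=n'+|\vec{\alpha}|$ and $\vec{\beta}=\vec{\beta}'+(\alpha_1,\ldots,\alpha_{N-1})$, with $n'\in\mathbb{N}$ and $\vec{\beta}'\in\mathbb{N}^{N-1}$ ranging freely. Under this substitution the monomial factorizes as
\[
z^n t^j r^{\vec{\alpha}} s^{\vec{\beta}} \;=\; z^{n'} t^j s^{\vec{\beta}'}\cdot (r_0 z)^{\alpha_0}\prod_{k=1}^{N-1}(r_k s_k z)^{\alpha_k},
\]
so the sum over $\vec{\alpha}\in\mathbb{N}^N$ decouples completely into $N$ independent sums, each of the form $\sum_{i\geq 0}h_i y^i = H(y)$. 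This produces a total factor of $H(r_0 z)\prod_{m=1}^{N-1}H(r_m s_m z)$, and the LHS of Theorem~\ref{thm 1.1} becomes
\[
H(r_0 z)\prod_{m=1}^{N-1}H(r_m s_m z)\cdot\sum_{\substack{n,j\geq 0\\ \vec{\beta}\in\mathbb{N}^{N-1}}} Q_{n,j,\vec{0},\vec{\beta}}\, z^n t^j s^{\vec{\beta}}.
\]

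Next I inspect the RHS of Theorem~\ref{thm 1.1} and observe that the identical factor $H(r_0 z)\prod_{m=1}^{N-1}H(r_m s_m z)$ sits in its numerator, with no other occurrence of the $r_k$ anywhere in the expression. Cancelling this factor from both sides converts Theorem~\ref{thm 1.1} into \eqref{eq cor 3.5} and, running the argument backward, converts \eqref{eq cor 3.5} into Theorem~\ref{thm 1.1}. There is no real obstacle here: the entire content of the corollary is Corollary~\ref{cor 3.4} plus the elementary observation that a color-$k$ fixed point contributes to $\col_k$, which is what legitimizes both the recursion from Corollary~\ref{cor 3.4} and the subsequent re-indexing.
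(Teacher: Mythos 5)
Your proof is correct and essentially the same as the paper's. The paper obtains one direction trivially by setting $r_0=\cdots=r_{N-1}=0$, and for the other direction multiplies both sides of \eqref{eq cor 3.5} by $H(r_0z)\prod_{m=1}^{N-1}H(r_ms_mz)$ and invokes Corollary~\ref{cor 3.4}; you run the same Corollary~\ref{cor 3.4} factorization in the opposite order and note that the $H$-factor is an invertible power series, which makes the single computation reversible and handles both directions at once. This is a slightly cleaner packaging of the identical idea.
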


\begin{proof}
In one direction, take the formula from Theorem \ref{thm 1.1} and simply set $r_0=r_1=...=r_{N-1}=0$. For the other direction, start with \eqref{eq cor 3.5} and multiply both sides by $H(r_0z)\prod_{m=1}^{N-1}H(r_ms_mz)$. The left hand side becomes
\[\left(\sum_{n\geq 0}r_0^nz^nh_n\right)
\left(\prod_{m=1}^{N-1}\sum_{n\geq 0}(r_ms_m)^nz^nh_n\right)\left(\sum_{\substack{n,j\geq 0 \\ \vec{\beta}\in\mathbb{N}^{N-1}}}Q_{n,j,\vec{0},\vec{\beta}}z^nt^js^{\vec{\beta}}\right)\]
\[=\sum_{n\geq 0}z^n \sum_{\substack{j\geq 0 \\
\vec{\beta}\in\mathbb{N}^{N-1} \\
\vec{\alpha}\in\mathbb{N}^N }}
Q_{n-\left|\vec{\alpha}\right|,j,\vec{0},\vec{\beta}}t^js^{\vec{\beta}+(\alpha_1,...,\alpha_{N-1})}r^{\vec{\alpha}}\prod_{k=0}^{N-1}h_{\alpha_k}.\]
By Corollary \ref{cor 3.4}, this is equal to the left hand side of Theorem \ref{thm 1.1}.
\end{proof}

\begin{cor} \label{cor 3.6}
\eqref{eq cor 3.5} is equivalent to the recurrence relation
\[Q_{n,j,\vec{0},\vec{\beta}}=
\sum_{\substack{0\leq i\leq n-2 \\ j-n+i<k<j}}Q_{i,k,\vec{0},\vec{\beta}}h_{n-i}\]
\[+\sum_{m=1}^{N-1}\chi\left(\beta_m>0\right)\left(\sum_{\substack{0\leq i\leq n-1 \\ j-n+i<k\leq j}}Q_{i,k,\vec{0},\vec{\beta}(\hat{m})}h_{n-i}\right)\]
\[+\chi\left(j=0\right)\chi\left(\left|\vec{\beta}\right|=n\right)(-1)^n
\prod_{m=1}^{N-1}e_{\beta_m},\]

where if $\vec{\beta}=(\beta_1,\beta_2,...,\beta_{N-1})$ then \[\vec{\beta}(\hat{m}):=(\beta_1,...,\beta_{m-1},\beta_m-1,\beta_{m+1},...,\beta_{N-1}),\]

and $\chi(P)=0$ if the statement $P$ is false, and $\chi(P)=1$ if the statement $P$ is true.
\end{cor}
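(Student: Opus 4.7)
My plan is to cross-multiply the identity \eqref{eq cor 3.5}, cancel the common factor of $(1-t)$ from both sides, and then extract the coefficient of $z^n t^j s^{\vec{\beta}}$. The resulting coefficient identity will be precisely the recurrence stated in Corollary \ref{cor 3.6}. Since the passage between a formal power series identity and the corresponding system of coefficient identities is a bijection, this single calculation establishes equivalence in both directions at once.

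Set $S := \sum_{m=1}^{N-1} s_m$ and split the denominator of \eqref{eq cor 3.5} as $(1+S)H(tz) - (t+S)H(z) = [H(tz) - tH(z)] + S[H(tz) - H(z)]$. Using $(t^i - t)/(1-t) = -\sum_{k=1}^{i-1} t^k$ and $(t^i - 1)/(1-t) = -\sum_{k=0}^{i-1} t^k$, a factor of $(1-t)$ can be extracted from each bracket:
\[H(tz) - tH(z) = (1-t)\Bigl[1 - \sum_{i \geq 2} h_i z^i \sum_{k=1}^{i-1} t^k\Bigr], \qquad H(tz) - H(z) = -(1-t) \sum_{i \geq 1} h_i z^i \sum_{k=0}^{i-1} t^k.\]
Substituting these into \eqref{eq cor 3.5} and cancelling $(1-t)$ (a unit in the formal power series ring since it has constant term $1$), the identity becomes, after rearrangement,
\[F = F \cdot \Bigl[\sum_{i \geq 2} h_i z^i \sum_{k=1}^{i-1} t^k + \sum_{m=1}^{N-1} s_m \sum_{i \geq 1} h_i z^i \sum_{k=0}^{i-1} t^k\Bigr] + \prod_{m=1}^{N-1} E(-s_m z),\]
where $F$ denotes the left-hand side of \eqref{eq cor 3.5}.

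Next, I would extract $[z^n t^j s^{\vec{\beta}}]$ from both sides. The LHS contributes $Q_{n,j,\vec{0},\vec{\beta}}$. For the first summand on the RHS, the substitutions $a = n - i$ and $k' = j - k$ convert the bounds $i \geq 2$, $1 \leq k \leq i-1$ into $0 \leq a \leq n-2$ and $j - n + a < k' < j$, producing exactly the first sum in the recurrence. For the second summand, extracting the factor $s_m$ forces $\chi(\beta_m > 0)$ and replaces $\vec{\beta}$ by $\vec{\beta}(\hat{m})$; the same substitutions convert $i \geq 1$, $0 \leq k \leq i-1$ into $0 \leq a \leq n-1$ and $j - n + a < k' \leq j$, matching the second sum. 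Finally, since $\prod_{m=1}^{N-1} E(-s_m z) = \sum_{\vec{k}} (-1)^{|\vec{k}|} \bigl(\prod_m e_{k_m}\bigr) s^{\vec{k}} z^{|\vec{k}|}$, its $[z^n t^j s^{\vec{\beta}}]$ coefficient is $\chi(j=0)\chi(|\vec{\beta}|=n)(-1)^n \prod_{m=1}^{N-1} e_{\beta_m}$, giving the remaining term of the recurrence.

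The argument is essentially routine bookkeeping, and I foresee no substantive obstacle. The delicate points are the empty-sum convention (for instance, $\sum_{k=1}^{0} = 0$ when $i = 1$, which is exactly what makes the first sum of the recurrence start at $i = 2$ rather than $i = 1$) and the convention $Q_{a,k,\vec{0},\vec{\beta}} = 0$ for $k < 0$ (which lets the corollary write the unrestricted lower bound $k > j - n + i$ rather than $k \geq \max(0, j-n+i+1)$).
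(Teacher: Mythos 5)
Your proposal is correct and follows essentially the same route as the paper: both hinge on factoring $(1-t)$ out of the denominator via the geometric-sum identities for $t^i-t$ and $t^i-1$ (the paper phrases these as $t[n-1]_t$ and $[n]_t$), then matching coefficients of $z^n t^j s^{\vec{\beta}}$ after cross-multiplication. The paper simply runs the same algebra in the opposite direction — summing the recurrence into a generating-function identity and solving for $I$ — but as you note, each step is invertible, so the two presentations prove the same equivalence.
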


\begin{proof}
Let
\[I=\sum_{\substack{n,j\geq 0 \\ \vec{\beta}\in\mathbb{N}^{N-1}}} Q_{n,j,\vec{0},\vec{\beta}}z^nt^js^{\vec{\beta}}.\]

Then the recurrence relation is equivalent to
\[I=\sum_{\substack{n,j\geq 0 \\ \vec{\beta}\in\mathbb{N}^{N-1}}} 
z^n\sum_{\substack{0\leq i\leq n-2 \\ j-n+i<k<j}}
Q_{i,k,\vec{0},\vec{\beta}}h_{n-i}t^js^{\vec{\beta}}\]
\[+\sum_{m=1}^{N-1}\sum_{\substack{n,j\geq 0 \\ \vec{\beta}\in\mathbb{N}^{N-1}}} 
z^n\sum_{\substack{0\leq i\leq n-1 \\ j-n+i<k\leq j}}
Q_{i,k,\vec{0},\vec{\beta}(\hat{m})}h_{n-i}t^js^{\vec{\beta}(\hat{m})}s_m
+\sum_{\vec{\beta}\in\mathbb{N}^{N-1}}
(-z)^{\left|\vec{\beta}\right|}s^{\vec{\beta}}\prod_{m=1}^{N-1}e_{\beta_m}\]

\[=\sum_{\substack{n,k\geq 0 \\ \vec{\beta}\in\mathbb{N}^{N-1}}} 
z^n\sum_{0\leq i\leq n-2}
Q_{i,k,\vec{0},\vec{\beta}}s^{\vec{\beta}}h_{n-i}
\sum_{j=k+1}^{k+n-i-1}t^j\]
\[+\sum_{m=1}^{N-1}\sum_{\substack{n,k\geq 0 \\ \vec{\beta}\in\mathbb{N}^{N-1}}} 
z^n\sum_{0\leq i\leq n-1}
Q_{i,k,\vec{0},\vec{\beta}(\hat{m})}s^{\vec{\beta}(\hat{m})}
h_{n-i}s_m\sum_{j=k}^{k+n-i-1}t^j+\prod_{m=1}^{N-1}E(-s_mz)\]

\[=\sum_{\substack{n,k\geq 0 \\ \vec{\beta}\in\mathbb{N}^{N-1}}} 
z^n\sum_{0\leq i\leq n-2}
Q_{i,k,\vec{0},\vec{\beta}}t^ks^{\vec{\beta}}th_{n-i}
\left[n-i-1\right]_t\]
\[+\sum_{m=1}^{N-1}\sum_{\substack{n,k\geq 0 \\ \vec{\beta}\in\mathbb{N}^{N-1}}} 
z^n\sum_{0\leq i\leq n-1}
Q_{i,k,\vec{0},\vec{\beta}(\hat{m})}t^ks^{\vec{\beta}(\hat{m})}
s_mh_{n-i}\left[n-i\right]_t+\prod_{m=1}^{N-1}E(-s_mz)\]

\[=I\sum_{n\geq 2}t\left[n-1\right]_th_nz^n
+\sum_{m=1}^{N-1}I\sum_{n\geq 1}s_m\left[n\right]_th_nz^n
+\prod_{m=1}^{N-1}E(-s_mz).\]

If we let
\[A:=\sum_{n\geq 2}t\left[n-1\right]_th_nz^n\]
and
\[B:=\sum_{m=1}^{N-1}\sum_{n\geq 1}s_m\left[n\right]_th_nz^n,\]
then
\begin{equation}
I=\frac{\prod_{m=1}^{N-1}E(-s_mz)}{1-A-B}.
\label{eq4}
\end{equation}

Next we compute the denominator of this expression,
\[1-A-B=1+\sum_{n\geq 2}h_nz^nt\left(\frac{t^{n-1}-1}{1-t}\right)
+\sum_{m=1}^{N-1}\sum_{n\geq 1} h_nz^ns_m\left(\frac{t^n-1}{1-t}\right)\]
\[=\frac{1}{1-t}\left[1-t+H(tz)-tz-1-t(H(z)-z-1)
+\sum_{m=1}^{N-1}(H(tz)-H(z))s_m\right]\]
\[=\frac{1}{1-t}\left[\left(1+\sum_{m=1}^{N-1}s_m\right)H(tz)
-\left(t+\sum_{m=1}^{N-1}s_m\right)H(z)\right].\]

Substituting this back into $\eqref{eq4}$ gives the desired result.
\end{proof}

\section{Colored banners}

The previous section has shown that Theorem \ref{thm 1.1} is equivalent to the recurrence relation appearing in Corollary \ref{cor 3.6}. This section will be devoted to establishing this recurrence relation, thus proving Theorem \ref{thm 1.1}. There are two cases which will be treated separately, the case $\left|\vec{\beta}\right|=n$ and the case $\left|\vec{\beta}\right|<n$ (recall that the absolute value of a vector $\vec{\beta}\in\mathbb{N}^{N-1}$ is $\left|\vec{\beta}\right|=\sum_{m=1}^{N-1}\beta_m$).

First we consider the case $\left|\vec{\beta}\right|=n$, and define
\[D_{n,\vec{\beta}}(\xx):=Q_{n,0,\vec{0},\vec{\beta}}(\xx).\]
(Note that $\xx:=\left\{x_1,x_2,...\right\}$ denotes our usual set of commuting variables which we often omit, but we include here for the sake of clarity in the proof of Theorem \ref{cor 4.2}). Our goal is to compute the following recurrence relation for $D_{n,\vec{\beta}}(\xx)$,
and one can then check that it agrees with the recurrence relation appearing in Corollary \ref{cor 3.6} in the case when $\left|\vec{\beta}\right|=n$.

\begin{thm} \label{cor 4.2}
\[D_{n,\vec{\beta}}(\xx)=(-1)^n\left(\prod_{m=1}^{N-1}e_{\beta_m}(\xx)\right)
+\sum_{m=1}^{N-1}\chi(\beta_m>0)D_{n-1,\vec{\beta}(\hat{m})}(\xx)h_1(\xx),\]
recalling that $\vec{\beta}(\hat{m})=
(\beta_1,...,\beta_{m-1},\beta_m-1,\beta_{m+1},...,\beta_{N-1})$.
\end{thm}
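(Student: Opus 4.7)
The plan is to recast the recurrence as a closed-form identity for the generating function
\[D(z,\vec{s}) \;:=\; \sum_{n\geq 0}\,\sum_{\substack{\vec{\beta}\in\N^{N-1}\\ |\vec{\beta}|=n}} D_{n,\vec{\beta}}(\xx)\,z^n\,s^{\vec{\beta}}, \qquad Y\;:=\;\sum_{m=1}^{N-1}s_m,\]
and to derive that identity from the ornament description of $D_{n,\vec{\beta}}$. Extracting the coefficient of $z^n\,s^{\vec{\beta}}$ with $|\vec{\beta}|=n$ on both sides, the claimed recurrence is seen to be equivalent to the single identity
\[D(z,\vec{s}) \;=\; \prod_{m=1}^{N-1}E(-zs_m) \;+\; z\,h_1\,Y\,D(z,\vec{s}),\]
so the task reduces to establishing this closed form.

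First I would unpack $D_{n,\vec{\beta}}$ via Corollary \ref{cor 3.3}. Since $j=\exc=0$ forces every letter in the ornament to be unbarred (the bijection of Theorem \ref{thm 3.2} produces barred letters only at excedance positions), $\vec{\fix}=\vec{0}$ forbids length-$1$ necklaces (which correspond to fixed points of some color), and $|\vec{\beta}|=n$ forces every letter to carry a nonzero color, the three rules of Definition \ref{defn 3.1} become vacuous. Thus $D_{n,\vec{\beta}}(\xx)$ is the generating function, by total length $n$ and color vector $\vec{\beta}$, for multisets of primitive circular words of length $\geq 2$ over the sub-alphabet $\mathcal{B}^*:=\{u^m:u\in\PP,\,1\leq m\leq N-1\}$.

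Second, weighting each letter $u^m\in\mathcal{B}^*$ by $z\,x_u\,s_m$ and invoking the Chen--Fox--Lyndon factorization in the style of \cite{gr, sw}, the bijection between multisets of primitive necklaces and words over a totally ordered alphabet yields
\[\sum_{\substack{\text{multisets of primitive}\\ \text{necklaces, any length}}}\wt \;=\; \sum_{\text{words over }\mathcal{B}^*}\wt \;=\; \frac{1}{1-zh_1 Y}.\]
The multisets of length-$1$ necklaces are simply multisets of single letters, with generating function $\prod_{m,u}(1-zs_m x_u)^{-1}=\prod_{m=1}^{N-1}H(zs_m)$; dividing it off gives
\[D(z,\vec{s})\;=\;\frac{1}{(1-zh_1 Y)\,\prod_{m=1}^{N-1}H(zs_m)}.\]
Since $E(-t)H(t)=1$ implies $\prod_m H(zs_m)^{-1}=\prod_m E(-zs_m)$, this matches the displayed identity above. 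Reading off coefficients of $z^n\,s^{\vec{\beta}}$ then yields the recurrence: $\prod_m E(-zs_m)$ contributes $(-1)^n\prod_m e_{\beta_m}$, and $zh_1 Y\,D(z,\vec{s})$ contributes $h_1\sum_m\chi(\beta_m>0)D_{n-1,\vec{\beta}(\hat m)}$.

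The main obstacle I anticipate is the Chen--Fox--Lyndon step, namely verifying that the standard Lyndon-word bijection, specialized to $\mathcal{B}^*$ with the order inherited from $\mathcal{B}$, is simultaneously compatible with the weight $\wt$ and the color-vector tracking $s^{\vec{\beta}}$. This follows the same pattern as the monochromatic and bicolored cases treated in \cite{gr, sw}, so an appropriate citation should suffice; an alternative self-contained route would be a direct bijective argument via colored banners (as suggested by the section title) peeling off one letter at a time, with the signed term $(-1)^n\prod_m e_{\beta_m}$ correcting the behavior on ornaments consisting of a single length-$2$ necklace with two distinct letters.
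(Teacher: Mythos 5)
Your proof is correct and rests on the same core observation as the paper: when $|\vec{\beta}|=n$ the rules of Definition~\ref{defn 3.1} are vacuous, so $\mathcal{R}(n,0,\vec{0},\vec{\beta})$ is exactly the set of Gessel--Reutenauer ornaments over the positively colored alphabet, i.e.\ multisets of primitive necklaces of length at least two. The only real difference is that the paper reaches the recurrence by substituting $x_{i,j}\mapsto x_is_j$ into the derangement quasisymmetric function $D_n(X)$ and citing recurrence (8.2) of \cite{gr}, whereas you re-derive the equivalent closed-form identity $D(z,\vec{s})=\bigl((1-zh_1\sum_{m}s_m)\prod_{m}H(zs_m)\bigr)^{-1}$ directly from the Lyndon-factorization bijection between multisets of primitive necklaces and words --- the same Gessel--Reutenauer machinery, cited versus re-proved.
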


\begin{proof}

Similar to the definition of $\mathcal{R}(\check{\lambda},j)$, we let $\mathcal{R}(n,j,\vec{\alpha},\vec{\beta})$ denote the set of all ornaments of size $n$ with $j$ barred letters, $\beta_i$ letters of color $i\in[N-1]$, and $\alpha_i$ necklaces consisting of a single $i$-colored letter where $0\leq i\leq N-1$ (as usual, $N$ is arbitrary but fixed). Hence
\[D_{n,\vec{\beta}}(\xx)=\sum_{R\in\mathcal{R}(n,0,\vec{0},\vec{\beta})}\wt(R).\]

Since $\left|\vec{\beta}\right|=n$, the key fact is that the necklace rules of Definition \ref{defn 3.1} present no restrictions, since there are no $0$-colored letters in this case. Therefore $\mathcal{R}(n,0,\vec{0},\vec{\beta})$ can be viewed as a set of Gessel-Reutenauer ornaments as in \cite{gr}, but over the alphabet
\[\left\{1^1,1^2,...,1^{N-1},2^1,2^2,...,2^{N-1},...\right\}.\]
Given a necklace $(v)$ over this alphabet where $v=v_1^{\epsilon_1},v_2^{\epsilon_2},...,v_n^{\epsilon_n}$ with $v_i\in\mathbb{P}$ and $\epsilon_i\in[N-1]$, we define a new weight by
\[\wtt((v)):=x_{v_1,\epsilon_1}x_{v_2,\epsilon_2}\cdots x_{v_n,\epsilon_n},\]
where
\[X:=\left\{x_{1,1},x_{1,2},...,x_{1,N-1},x_{2,1},x_{2,2},...,x_{2,N-1},...\right\}\]
is a set of commuting variables.

Then by \cite[Theorem 3.6]{gr} we have
\[\sum_{\substack{\vec{\beta}\in\mathbb{N}^{N-1} \\ \left|\vec{\beta}\right|=n}}
D_{n,\vec{\beta}}(\xx)s^{\vec{\beta}}
=\sum_{\substack{\vec{\beta}\in\mathbb{N}^{N-1} \\ \left|\vec{\beta}\right|=n}}
\sum_{R\in\mathcal{R}(n,0,\vec{0},\vec{\beta})}\wtt(R)\bigg|_{x_{i,j}=x_is_j}
=D_n(X)\bigg|_{x_{i,j}=x_is_j},\]
where $D_n$ is the quasisymmetric generating function for derangements in $S_n$, as described in \cite[Section 8]{gr}. By Equation (8.2) of \cite{gr}, $D_n$ satisfies the following recurrence
\[D_n(X)=h_1(X)D_{n-1}(X)+(-1)^ne_n(X).\]

Next we compute the right hand side of this equation evaluated at $x_{i,j}=x_is_j$. First we have
\[D_{n-1}(X)\bigg|_{x_{i,j}=x_is_j}=\sum_{\substack{\vec{\beta}\in\mathbb{N}^{N-1} \\ \left|\vec{\beta}\right|=n-1}}
D_{n-1,\vec{\beta}}(\xx)s^{\vec{\beta}}.\]

Next,
\[h_1(X)\bigg|_{x_{i,j}=x_is_j}=\sum_{i\geq 1}x_i\left(\sum_{m=1}^{N-1}s_m\right)=h_1(\xx)\sum_{m=1}^{N-1}s_m.\]

And finally
\[e_n(X)\bigg|_{x_{i,j}=x_is_j}=\sum_{\substack{\vec{\beta}\in\mathbb{N}^{N-1} \\ \left|\vec{\beta}\right|=n}}
\prod_{m=1}^{N-1}s_m^{\beta_m}e_{\beta_m}(\xx)
=\sum_{\substack{\vec{\beta}\in\mathbb{N}^{N-1} \\ \left|\vec{\beta}\right|=n}}s^{\vec{\beta}}
\prod_{m=1}^{N-1}e_{\beta_m}(\xx).\]

Thus 
\[\sum_{\substack{\vec{\beta}\in\mathbb{N}^{N-1} \\ \left|\vec{\beta}\right|=n}}
D_{n,\vec{\beta}}(\xx)s^{\vec{\beta}}=D_n(X)\bigg|_{x_{i,j}=x_is_j}\]
\[=h_1(\xx)\left(\sum_{m=1}^{N-1}s_m\sum_{\substack{\vec{\beta}\in\mathbb{N}^{N-1} \\ \left|\vec{\beta}\right|=n-1}}
D_{n-1,\vec{\beta}}(\xx)s^{\vec{\beta}}\right)
+(-1)^n\sum_{\substack{\vec{\beta}\in\mathbb{N}^{N-1} \\ \left|\vec{\beta}\right|=n}}s^{\vec{\beta}}
\prod_{m=1}^{N-1}e_{\beta_m}(\xx).\]

Extracting the coefficient of $s^{\vec{\beta}}$ from both sides gives the desired result.

\end{proof}

It now remains to consider the case $\left|\vec{\beta}\right|<n$ for establishing the recurrence relation in Corollary \ref{cor 3.6}. For this we introduce colored banners, which are a generalization of the banners introduced in \cite{sw}.

\begin{defn} \label{defn 4.3}
A \textit{colored banner} (or simply banner) is a word $B$ over the alphabet $\mathcal{B}$ such that 

1. if $B(i)$ is barred then $\left|B(i)\right|\geq\left|B(i+1)\right|,$

2. if $B(i)$ is $0$-colored and unbarred, then $\left|B(i)\right|\leq \left|B(i+1)\right|$ or $i$ equals the length of $B$,

3. the last letter of $B$ is unbarred.

\end{defn}

Recall that a Lyndon word over an ordered alphabet is a word that is strictly lexicographically larger than all its circular rearrangements. And a Lyndon factorization of a word is a factorization into a lexicographically weakly increasing sequence of Lyndon words. It is a fact that every word has a unique Lyndon factorization. We say that a word of length $n$ has Lyndon type $\lambda$ (where $\lambda$ is a partition of $n$) if parts of $\lambda$ equal the lengths of the words in the Lyndon factorization (see \cite[Theorem 5.1.5]{lothaire}).

We will apply Lyndon factorization to banners, but we will do so using a new order $<_B$ on the alphabet $\mathcal{B}$ as follows
\[1^1<_B 1^2<_B ... <_B 1^{N-1}\]
\[<_B 2^1<_B 2^2<_B ... <_B 2^{N-1}\]
\[<_B 3^1<_B 3^2<_B ... <_B 3^{N-1}\]
\[\vdots\]
\[1^0<_B\bar{1^0}<_B 2^0<_B\bar{2^0}<_B 3^0<_B\bar{3^0}
<_B ...\]
(The reason for choosing this order will become apparent in the proof of Theorem \ref{thm 4.6}). We define the weight wt$(B)$ of a banner $B(i),...,B(n)$ to be the monomial $x_{\left|B(1)\right|}...x_{\left|B(n)\right|}$. And we define the cv-cycle type of a banner B to be the multiset 
\[\check{\lambda}(B)=\left\{(\lambda_1,\vec{\alpha^1}),...,(\lambda_k,\vec{\alpha^k})\right\}\]
if $B$ has Lyndon type $\lambda$ with respect to $<_B$, and the corresponding word of length $\lambda_i$ in the Lyndon factorization has color vector $\vec{\alpha^i}$. Then $K(\check{\lambda},j)$ will denote the set of all banners of cv-cycle type $\check{\lambda}$ with exactly $j$ barred letters.

\begin{thm} \label{thm 4.4}
There exists a weight preserving bijection from $\mathcal{R}(\check{\lambda},j)$ to $K(\check{\lambda},j)$, consequently
\[Q_{\check{\lambda},j}=\sum_{B\in K(\check{\lambda},j)}
\wt(B).\]
\end{thm}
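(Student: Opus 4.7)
The plan is to construct the bijection in the style of Gessel--Reutenauer as refined in \cite[Theorem 3.6]{sw}, with the multicolored order $<_B$ now playing the role that the bicolored order plays there. The forward map $\Phi\colon\mathcal{R}(\check\lambda,j)\to K(\check\lambda,j)$ sends an ornament $R$ to the word obtained by replacing each necklace $(v)$ of $R$ with its unique Lyndon rotation under $<_B$ (unique because $v$ is primitive) and then concatenating these Lyndon words in weakly lex-increasing order $L_1\leq_B L_2\leq_B\cdots\leq_B L_k$. The candidate inverse $\Psi\colon K(\check\lambda,j)\to\mathcal{R}(\check\lambda,j)$ takes the unique Lyndon factorization of the banner $B$ under $<_B$ and returns the multiset of cyclic closures of the Lyndon factors.

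I would first establish a structural lemma: the last letter of any Lyndon word under $<_B$ is unbarred. The point is that $<_B$ places every non-zero-colored letter strictly below every $0$-colored letter and each $\bar{u^0}$ immediately above $u^0$, so the automatic inequality ``last letter $\leq_B$ first letter'' of a Lyndon word, together with the rules governing the transitions of absolute value, forbids a barred last letter. This lemma immediately gives banner rule 3 for $\Phi(R)$; and chaining ``last letter of $L_i\leq_B$ first letter of $L_i\leq_B$ first letter of $L_{i+1}$'' with the monotonicity in absolute value of $<_B$ among $0$-colored letters converts banner rule 2 at each junction into an automatic inequality. Banner rules 1 and 2 inside each $L_i$ are direct translations of the necklace rules of $(v)$ at those positions, so $\Phi(R)$ is a banner; exactly the same lemma and monotonicity, applied to the seam $a_na_1$ of each Lyndon factor produced by $\Psi$, show that each cyclic closure $(L_i)$ satisfies necklace rules 1 and 2.

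The step I expect to be the main obstacle is necklace rule 3 for $\Psi(B)$: a solitary barred letter is never a Lyndon factor of a banner. Suppose otherwise, that $L_i=\bar{u^0}$. Banner rule 3 prevents $L_i$ from being the last factor, and banner rule 1 at $\bar{u^0}$ together with $L_i\leq_B L_{i+1}$ forces the first letter of $L_{i+1}$ to itself be $\bar{u^0}$, since the only letter $\geq_B\bar{u^0}$ with absolute value at most $u$ is $\bar{u^0}$. Iterating, one eventually reaches an index $j\geq i$ for which $L_j=\bar{u^0}$ and $L_{j+1}$ begins with $\bar{u^0}$ but has length at least $2$. A direct computation then shows that $L_jL_{j+1}=\bar{u^0}\cdot L_{j+1}$ is itself Lyndon (its maximal initial run of $\bar{u^0}$'s is strictly longer than that of any of its rotations, so it beats every rotation at the first position where they differ), contradicting the uniqueness of the Lyndon factorization. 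With this obstacle cleared, $\Phi$ and $\Psi$ are visibly mutually inverse: both preserve the multiset of letters, and they match necklace lengths and color vectors to Lyndon-factor lengths and color vectors. Preservation of $\wt$, cv-cycle type, and the number of barred letters is then immediate, and the formula $Q_{\check\lambda,j}=\sum_{B\in K(\check\lambda,j)}\wt(B)$ follows from Corollary \ref{cor 3.3}.
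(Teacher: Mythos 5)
Your approach is the same as the paper's: the paper disposes of this theorem in one sentence by citing the Lyndon--factorization bijection of Shareshian--Wachs \cite[Theorem 3.6]{sw}, and what you have written is essentially a spelling-out of that argument under $<_B$. The map $\Phi$ (Lyndon rotation of each necklace, concatenate in order), the inverse $\Psi$ (Lyndon factorization of the banner, cyclically close each factor), the observation that the only non-automatic necklace condition is rule~3 (no solitary barred letter as a factor), and the iteration ``$L_i=\bar{u^0}$ forces $L_{i+1}$ to begin with $\bar{u^0}$, so eventually some $L_{j+1}$ of length $\geq 2$ begins with $\bar{u^0}$'' are all correct and are exactly the right things to check.

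The one place that needs tightening is the final sentence. Having shown that $L_j L_{j+1}$ is Lyndon, you conclude ``contradicting the uniqueness of the Lyndon factorization.'' That step does not follow by itself: replacing $L_j,L_{j+1}$ by the single factor $L_jL_{j+1}$ need not produce another \emph{valid} factorization, because one must still check that $L_jL_{j+1}$ sits correctly between $L_{j-1}$ and $L_{j+2}$ in the required monotone order, and this is not automatic (indeed $L_jL_{j+1}$ is strictly ``larger'' than $L_{j+1}$). The correct justification is the converse of the Lyndon concatenation lemma: if $u,v$ are Lyndon (max-rotation convention) and $uv$ is Lyndon, then necessarily $u$ exceeds $v$ in the order used for the factorization, so $u$ and $v$ cannot appear consecutively in that order in a Lyndon factorization. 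Alternatively, and more directly, once you know $L_j=\bar{u^0}$ and $L_{j+1}=\bar{u^0}X$ with $X$ nonempty and every letter of $X$ at most $\bar{u^0}$, the required inequality between $L_j$ and $L_{j+1}$ already fails under the comparison actually used for Chen--Fox--Lyndon factorizations (one compares $L_j^\omega$ with $L_{j+1}^\omega$, or equivalently works with the reversed alphabet, where a proper prefix is not automatically ``smaller''), and the $L_jL_{j+1}$-is-Lyndon computation becomes unnecessary. A related caution: the ``structural lemma'' as you stated it (``the last letter of any Lyndon word under $<_B$ is unbarred'') is false for the single-letter Lyndon word $\bar{u^0}$; what you actually prove and use is the statement for Lyndon rotations of colored necklaces (and, on the other side, for Lyndon factors of banners), where necklace rule~3 and the iteration, respectively, supply the missing hypothesis. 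With these two points made precise, the argument is complete and is a faithful rendering of what the paper invokes from \cite{sw}.
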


\begin{proof}
The proof uses Lyndon factorization and is identical to the proof of \cite[Theorem 3.6]{sw}.
\end{proof}

\begin{defn} \label{defn 4.5}
A \textit{$0$-colored marked sequence}, denoted $(\omega,b,0)$, is a weakly increasing sequence $\omega$ of positive integers, together with a positive integer $b$, which we call the mark, such that $1\leq b< \text{length}(\omega)$. The set of all $0$-colored marked sequences with length$(\omega)=n$ and mark equal to $b$ will be denoted $M(n,b,0)$.

For $1\leq m\leq N-1$, an \textit{$m$-colored marked sequence}, denoted $(\omega,b,m)$, is a weakly increasing sequence $\omega$ of positive integers, together with a nonnegative integer $b$ such that $0\leq b< \text{length}(\omega)$. The set of all $m$-colored marked sequences with length$(\omega)=n$ and mark equal to $b$ will be denoted $M(n,b,m)$.

We will use colored marked sequences in Theorem \ref{thm 4.6} below, where one can think of the map $\gamma$ as removing a colored marked sequence $(\omega,b,m)$ from a banner. The sequence $\omega$ corresponds to the absolute values of the letters removed, $b$ corresponds to the number of barred letters removed, and one of the letters removed has color $m$ while the rest of the letters removed all have color 0.

Let $K_0(n,j,\vec{\beta})$ denote the set of all banners of length $n$, with Lyndon type having no parts of size 1, color vector equal to $\vec{\beta}$, and $j$ bars. For $m\in [N-1]$ and $\beta_m>0$, define
\[X_m:=
\biguplus_{\substack{0\leq i\leq n-1 \\
j-n+i< k\leq j}}
K_0(i,k,\vec{\beta}(\hat{m}))\times M(n-i,j-k,m),\]
and let $X_m:=0$ if $\beta_m=0$. We also define
\[X_0=:\biguplus_{\substack{0\leq i\leq n-2 \\ j-n+i<k<j}}
K_0(i,k,\vec{\beta})\times M(n-i,j-k,0).\]

\end{defn}

\begin{thm} \label{thm 4.6}
If $\left|\vec{\beta}\right|<n$, then there is a bijection
\[\gamma:K_0(n,j,\vec{\beta})\rightarrow\biguplus_{m=0}^{N-1}X_m\]
such that if $\gamma(B)=(B',(\omega,b))$, then $\text{wt}(B)=\text{wt}(B')\text{wt}(\omega)$
where the weight of any sequence of positive integers is the monomial $x_{\omega_1}x_{\omega_2}\cdots x_{\omega_{n-i}}$.
\end{thm}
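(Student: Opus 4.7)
The plan is to construct $\gamma$ by removing the last Lyndon factor of $B$ with respect to the order $<_B$. Given $B \in K_0(n,j,\vec\beta)$, I take its unique Lyndon factorization $B = L_1 L_2 \cdots L_k$ in $<_B$; since $B$ has no Lyndon factors of length $1$, each $|L_r| \geq 2$. I then set $B' := L_1 \cdots L_{k-1}$, which has Lyndon factorization $L_1 \cdots L_{k-1}$ and hence still lies in the appropriate $K_0$, and encode $L_k$ as a triple $(\omega, b, m)$: $\omega$ is the weakly increasing rearrangement of the absolute values of the letters of $L_k$, $b$ is the number of barred letters in $L_k$, and $m$ is the unique nonzero color appearing in $L_k$ (with $m = 0$ if $L_k$ consists entirely of $0$-colored letters).

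The essential structural claim is that $L_k$ contains at most one letter of nonzero color. Here I use that in $<_B$ every nonzero-colored letter is strictly smaller than every $0$-colored letter, so a Lyndon word containing any $0$-colored letter must begin with one; combined with the banner conditions (barred $0$-colored letters weakly decrease to their successor, unbarred $0$-colored letters weakly increase to their successor, nonzero-colored letters impose no constraint), a second nonzero-colored letter in the same factor would produce a circular shift of $L_k$ that is $\geq L_k$ in $<_B$, contradicting the Lyndon property. Once this is in hand, the mark bounds $1 \leq b < n - i$ for $m = 0$ and $0 \leq b < n - i$ for $m \geq 1$ follow: the last letter of $L_k$ is the last letter of $B$ and therefore unbarred, while in the all-$0$-colored case the Lyndon property forces the first letter of $L_k$ to be barred, since an unbarred $0$-colored letter at the start would force an ascent to an equal or larger $0$-colored letter, contradicting that the first letter is at least as large as every other letter in the word.

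For the inverse, I reconstruct $L_k$ from $(\omega, b, m)$ as the unique Lyndon word in $<_B$ with the prescribed multiset of letters and colors. In the all-$0$-colored case the canonical form is $\overline{\omega_{n-i}^0}, \overline{\omega_{n-i-1}^0}, \ldots, \overline{\omega_{n-i-b+1}^0}, \omega_1^0, \omega_2^0, \ldots, \omega_{n-i-b}^0$: the $b$ largest absolute values appear first as barred letters in weakly decreasing order, then the remaining absolute values as unbarred letters in weakly increasing order. The case $m \geq 1$ proceeds analogously, with the unique nonzero-colored letter inserted at the position forced by the banner and Lyndon conditions. Weight preservation is immediate from $\wt(B) = \wt(B') \cdot x_{\omega_1} \cdots x_{\omega_{n-i}}$. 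The hard part will be the detailed case analysis for $m \geq 1$: verifying that $(\omega, b, m)$ uniquely determines a Lyndon word of the correct form and that appending it to $B'$ yields a banner whose Lyndon factorization truly has $L_k$ as its last factor; this requires a delicate balance of the cyclic comparisons in $<_B$ against the unconstrained behavior of nonzero-colored letters in the banner definition, and will form the technical heart of the proof.
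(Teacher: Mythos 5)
Your proposal hinges on the structural claim that the last Lyndon factor $L_k$ of a banner $B\in K_0(n,j,\vec\beta)$ with $|\vec\beta|<n$ contains at most one nonzero-colored letter. This claim is false. Consider $B=(\bar{3^0},1^1,2^1)$ with $N\geq 2$: rule $1$ of Definition~\ref{defn 4.3} holds since $3\geq 1$, rule $2$ is vacuous (there is no unbarred $0$-colored letter), and rule $3$ holds since $2^1$ is unbarred. In $<_B$ every nonzero-colored letter is strictly below every $0$-colored letter, so both nontrivial rotations $(1^1,2^1,\bar{3^0})$ and $(2^1,\bar{3^0},1^1)$ start with a letter strictly smaller than $\bar{3^0}$; hence $B$ is Lyndon and its Lyndon factorization is the single factor $B$ of length $3$. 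Thus $B\in K_0(3,1,\vec\beta)$ with $|\vec\beta|=2<3=n$ and its last (only) Lyndon factor contains two nonzero-colored letters. Your argument that a second nonzero-colored letter would force some rotation to be $\geq_B L_k$ breaks down precisely because a rotation starting from a nonzero-colored position is automatically strictly below a rotation starting from a $0$-colored position in lexicographic order, so the nonzero-colored letters can never help a rotation compete with the one starting at the largest $0$-colored letter.

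This is not just a gap in one step but a structural obstruction to your whole decomposition. The codomain $\biguplus_{m=0}^{N-1}X_m$ encodes removing exactly one $m$-colored letter (for $m\geq 1$) or none (for $m=0$), together with some number of $0$-colored letters; removing a whole Lyndon factor, which can carry arbitrarily many nonzero-colored letters, cannot land in this disjoint union. The paper instead invokes Lemma~\ref{lemma 4.8} to pass to the \emph{increasing factorization}, a strict refinement of the Lyndon factorization into blocks of the form $a^p\cdot u$, and defines $\gamma$ by peeling off a carefully chosen subset of letters from the last such block $B_d$ (sometimes a suffix, sometimes a single embedded nonzero-colored letter, sometimes a rearranged run of barred and unbarred $0$-colored letters), with the remaining piece $\tilde{B_d}$ reattached to $B_1\cdots B_{d-1}$. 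That finer control is exactly what is needed to guarantee at most one nonzero-colored letter is removed per step. Your proposal would need to be reworked around the increasing factorization and a case analysis in the spirit of the paper's Cases~1--5 for $\gamma$.
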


\begin{cor} \label{cor 4.7}
Theorem \ref{thm 4.6} establishes the recurrence relation appearing in Corollary \ref{cor 3.6} in the case that $\left|\vec{\beta}\right|<n$.
\end{cor}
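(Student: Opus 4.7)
The plan is to translate the recurrence of Corollary 3.6 (in the case $|\vec{\beta}| < n$) into a statement about banners, and then read it off from the bijection $\gamma$ of Theorem \ref{thm 4.6}. The starting point is the observation that cv-cycle types $\check{\lambda}$ with $\vec{\alpha} = \vec{0}$ are exactly those whose underlying partition has no part of length $1$ (a length-$1$ cycle of color $k$ being precisely a fixed point of color $k$). Summing the banner formula of Theorem \ref{thm 4.4} over all such $\check{\lambda}$ with total color vector $\vec{\beta}$ and exactly $j$ bars then gives
\[ Q_{n,j,\vec{0},\vec{\beta}} \;=\; \sum_{B \in K_0(n,j,\vec{\beta})} \wt(B). \]

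Next I would apply $\gamma$ and partition the sum according to which $X_m$ each image lies in:
\[ Q_{n,j,\vec{0},\vec{\beta}} \;=\; \sum_{m=0}^{N-1} \;\sum_{(B',(\omega,b)) \in X_m} \wt(B')\,\wt(\omega). \]
For each fixed $m$ and each allowed index $(i,k)$, the banner factor $B'$ ranges over $K_0(i,k,\vec{\beta})$ when $m=0$ and over $K_0(i,k,\vec{\beta}(\hat m))$ when $m \geq 1$, so applying Theorem \ref{thm 4.4} a second time identifies its contribution with $Q_{i,k,\vec{0},\vec{\beta}}$ or $Q_{i,k,\vec{0},\vec{\beta}(\hat m)}$ respectively. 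The marked-sequence factor then decouples: once $(i,k)$ and $m$ are fixed, the mark $b$ is pinned at $j-k$, and $\omega$ ranges freely over weakly increasing positive-integer sequences of length $n-i$, whose weight generating function is the complete homogeneous symmetric function $h_{n-i}(\x)$.

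Matching summation ranges against Definition \ref{defn 4.5} finishes the job. The $0$-colored constraint $1 \leq b < n-i$ with $b = j-k$ becomes $j-n+i < k < j$ together with $0 \leq i \leq n-2$, which is exactly the range of the first sum in Corollary \ref{cor 3.6}; the $m$-colored constraint $0 \leq b < n-i$ becomes $j-n+i < k \leq j$ with $0 \leq i \leq n-1$, matching the second sum, and the convention $X_m = \emptyset$ when $\beta_m = 0$ reproduces the $\chi(\beta_m > 0)$ factor. Since the hypothesis $|\vec{\beta}| < n$ forces $\chi(|\vec{\beta}|=n)=0$, the derangement term $(-1)^n \prod_m e_{\beta_m}$ in Corollary \ref{cor 3.6} drops out, and the two sides agree. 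The most delicate step --- the one I would be most careful about --- is the initial identification of $K_0(n,j,\vec{\beta})$ with banners having $\vec{\alpha}=\vec{0}$, since this rests on tracking a length-$1$ Lyndon block through the Lyndon-factorization bijection of Theorem \ref{thm 4.4} back to a singleton necklace and thence (via Theorem \ref{thm 3.2}) to a fixed point of the underlying colored permutation; once this bookkeeping is in place, the rest of the argument is a routine matching of indices.
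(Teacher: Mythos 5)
Your proposal is correct and follows the same route as the paper: the paper's proof of this corollary is a one-liner citing exactly the fact you isolate, that $\sum_{(\omega,j-k)\in M(n-i,j-k,m)}\wt(\omega)=h_{n-i}$, with the rest of the bookkeeping (the identification $Q_{n,j,\vec{0},\vec{\beta}}=\sum_{B\in K_0(n,j,\vec{\beta})}\wt(B)$ via Theorem \ref{thm 4.4}, the decomposition of $K_0(n,j,\vec{\beta})$ via $\gamma$ into the $X_m$, and the translation of the mark constraints into the index ranges of Corollary \ref{cor 3.6}) left implicit in the surrounding setup. You spell that bookkeeping out carefully, including the correct handling of the $\chi(\beta_m>0)$ factor, the vanishing of the derangement term when $|\vec{\beta}|<n$, and the correspondence between length-one Lyndon factors, singleton necklaces, and colored fixed points.
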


\begin{proof}
This follows from the fact that
\[\sum_{(\omega,j-k)\in M(n-i,j-k,m)}\text{wt}(\omega)
=h_{n-i}.\]
\end{proof}

In order to prove Theorem \ref{thm 4.6}, we will need the following Lemma (see \cite[Lemma 4.3]{dw}).

\begin{lemma} \label{lemma 4.8}
Let $B$ be a banner. If the Lyndon type of $B$ has no parts of size one, then $B$ has a unique increasing factorization (with respect to $<_B$). By increasing factorization of $B$, we mean that $B$ has the form $B=B_1\cdot B_2\cdots B_d$ where each $B_i$ has the form 
\[B_i=(\underbrace{a_i,...,a_i}_{p_i\text{ times}})\cdot u_i,\]
where $a_i\in\mathcal{B}$, $p_i>0$, and $u_i$ is a word of positive length over the alphabet $\mathcal{B}$ whose letters are all strictly less than $a_i$ with respect to $<_B$, and $a_1\leq_B a_2\leq_B ...\leq_B a_d$. Note that the increasing factorization is a refinement of the Lyndon factorization.
\end{lemma}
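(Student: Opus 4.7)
The plan is to separate existence and uniqueness. For existence, I would invoke the Chen--Fox--Lyndon theorem to factor $B = L_1 L_2 \cdots L_r$ uniquely as a lexicographically weakly $<_B$-increasing product of Lyndon words; by the hypothesis on the Lyndon type of $B$, each $|L_i|\geq 2$. For each $L_i$ let $a_i$ denote its largest letter under $<_B$. Two structural properties of a Lyndon word $L$ of length at least $2$ (in the ``strictly lex-largest'' convention of the paper) are crucial: its first letter equals its largest letter, and its last letter is strictly smaller. The first is immediate because every rotation of $L$ begins with some letter of $L$ and $L$ dominates all its rotations. For the second, if the last letter of $L$ were the maximum $a$, then comparing $L$ position-by-position with the rotation that cycles the final letter to the front would force every letter of $L$ to equal $a$; then $L = a^{|L|}$ would coincide with all its rotations, contradicting the Lyndon condition.

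Granted these facts, I would split each $L_i$ at its maximal runs of $a_i$, writing
\[ L_i \;=\; a_i^{p_{i,1}} v_{i,1}\, a_i^{p_{i,2}} v_{i,2}\cdots a_i^{p_{i,k_i}} v_{i,k_i}, \]
with each $p_{i,j}\geq 1$ and each $v_{i,j}$ a nonempty word over $\mathcal{B}$ whose letters are all strictly $<_B a_i$; nonemptiness of $v_{i,k_i}$ is exactly the last-letter observation, and nonemptiness of the other $v_{i,j}$ follows from maximality of the runs. Concatenating these decompositions for $i=1,\dots,r$ produces a factorization of $B$ of the form demanded by the lemma. The monotonicity of the leading letters $a_{i,j}$ is automatic inside a single $L_i$ (they all equal $a_i$); across the boundary from $L_i$ to $L_{i+1}$ it follows from $L_i\leq_B L_{i+1}$ combined with the fact that each $L_j$ begins with its maximum letter, so the sequence of leading letters is weakly increasing throughout.

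For uniqueness I would argue by induction on $|B|$ that the first block is forced; this half does not actually need the hypothesis. Suppose $B = B_1 B_2\cdots = B'_1 B'_2\cdots$ are two increasing factorizations. Both $B_1$ and $B'_1$ are prefixes of $B$ and must share its first letter $a$, so write $B_1 = a^{p_1} u_1$ and $B'_1 = a^{p'_1} u'_1$. If $p'_1 > p_1$, then $B(p_1+1)$ is simultaneously $a$ (it lies in the initial $a$-run of $B'_1$) and $u_1(1)$, but $u_1(1)<_B a$ by the definition of $u_1$, a contradiction; hence $p_1 = p'_1$. If then, without loss of generality, $u_1\subsetneq u'_1$, the first letter of $B_2$ lies inside $u'_1$ and is therefore strictly $<_B a$, contradicting $a_2\geq_B a_1 = a$. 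Thus $B_1 = B'_1$, and induction on the remaining suffix completes the proof.

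The subtlest ingredient is the structural claim used in the existence step, namely that a Lyndon word of length at least $2$ cannot end in its maximum letter. Once this is in place, all remaining work in both halves is a direct manipulation of the definitions together with the standard uniqueness of the Lyndon factorization, so I do not anticipate conceptual obstacles beyond that observation.
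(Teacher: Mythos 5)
Your proof is correct. The paper itself does not prove Lemma~\ref{lemma 4.8}; it is stated with a pointer to D\'esarm\'enien and Wachs \cite[Lemma 4.3]{dw}, so there is no internal argument to compare against. Your reconstruction is the natural one and it holds up: the two structural facts you isolate for a Lyndon word $L$ of length at least two under the ``strictly lex-largest'' convention (the first letter of $L$ is its maximum letter, and the last letter is strictly smaller) are exactly what is needed, and both of your sketches for them are sound. Splitting each Chen--Fox--Lyndon block $L_i$ at the maximal runs of its leading letter $a_i$ produces blocks of the required form with $v_{i,j}$ nonempty (the interior ones by maximality, the last one because $L_i$ does not end in $a_i$), and the leading letters are weakly increasing both inside each $L_i$ and across consecutive $L_i, L_{i+1}$ since $L_i \leq_B L_{i+1}$ and each begins with its maximum. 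For uniqueness, your two-stage comparison (first forcing $p_1 = p_1'$ because $u_1$ contains no $a_1$, then forcing $|u_1| = |u_1'|$ because the leading letter of $B_2$ must be $\geq_B a_1$) pins down $B_1 = B_1'$, and induction on the suffix finishes; you are also right that this half does not use the hypothesis on the Lyndon type, which only enters in existence to guarantee each $|L_i| \geq 2$ so that the ``last letter is strictly smaller'' observation applies.
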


For example, the Lyndon factorization of the word \[(6^1,1^2,5^1,\bar{4^0},\bar{4^0},4^1,4^0,\bar{4^0},3^2,5^0,7^1)\]
is
\[(6^1,1^2,5^1)\cdot(\bar{4^0},\bar{4^0},4^1,4^0,\bar{4^0},3^2)\cdot(5^0,7^1),\]
which has no parts of size one, so its increasing factorization is
\[(6^1,1^2,5^1)\cdot(\bar{4^0},\bar{4^0},4^1,4^0)\cdot(\bar{4^0},3^2)\cdot(5^0,7^1).\]

\vspace{.25cm}
Next we prove Theorem \ref{thm 4.6}. In the case $N=1$, this proof reduces to the proof of \cite[Theorem 3.7]{sw}. In the general case that $N>1$ (and $\left|\vec{\beta}\right|<n$), the proof is inspired by \cite[Theorem 3.7]{sw}, but significantly more complicated. 

\begin{proof}[Proof of Theorem \ref{thm 4.6}]
Describing $\gamma$ (and its inverse) requires us to consider many different cases. For convenience we will make a note of which case $\gamma(B)$ falls under when considering $\gamma^{-1}(\gamma(B))$ (and vice versa) so that one can check that $\gamma$ is indeed a bijection.

First, we take the increasing factorization of $B$, say $B=B_1\cdot B_2\cdots B_d$. Let
\[B_d=(\underbrace{a,...,a}_{p\text{ times}})\cdot u,\]
where $a\in\mathcal{B}$, $p>0$, and $u$ is a word of positive length over the alphabet $\mathcal{B}$ whose letters are all strictly less than $a$ with respect to the order $<_B$. We observe that $\left|\vec{\beta}\right|<n$ implies that $a$ is $0$-colored, since we have taken the increasing factorization with respect to $<_B$. For ease of notation, we will write
\[B_d=(\underbrace{a,...,a}_{p\text{ times}})\cdot u=a^p\cdot u,\]
where it is understood that $a$ is $0$-colored, and the superscript $p$ means that the letter $a$ is repeated $p$ times.


\vspace{.25cm}
\textbf{Case 1, $\gamma$}\hspace{.5cm}(Case 1.1, $\gamma^{-1}$)

$B_d=a^pc$ where $a$ is unbarred and $c\in\mathcal{B}$. Since the banner rules in Definition \ref{defn 4.3} require that $\left|c\right|\geq\left|a\right|$, this can only happen if $c$ has positive color, say $c$ has color $m>0$. Then set
\[\gamma(B):=(B',(\omega,b,m)),\]
where
\[B'=:B_1\cdot ...\cdot B_{d-1},\]
\[\omega:=(\underbrace{\left|a\right|,...,\left|a\right|}_{p\text{ times}},\left|c\right|),
\text{ and }b:=0.\]

For example if
\[B_d=(4^0,4^0,4^0,4^0,9^2),\]
then
\[(\omega,b,m)=((4,4,4,4,9),0,2).\]

\vspace{.25cm}
\textbf{Case 2, $\gamma$}

$B_d=a^pc,i_1,i_2,...i_l$ where $a$ is unbarred, $c\in\mathcal{B}$, and $i_1$ is unbarred. Again since $a$ is unbarred, $c$ must have positive color. Next, we find the index $s$ such that $1\leq s\leq l$ and either one of the following subcases hold:

\vspace{.25cm}
\textbf{Case 2.1, $\gamma$}\hspace{.5cm}(Case 1.1, $\gamma^{-1}$)

$i_1\leq_B i_2\leq_B ...\leq_B i_l$ are all $0$-colored and unbarred. We then take $s=l$ and set
\[\gamma(B):=(B',(\omega,b,m)),\]
where $c$ has color $m>0$, and where
\[B'=:B_1\cdot ...\cdot B_{d-1},\]
\[\omega:=(\left|i_1\right|,...,\left|i_l\right|,\underbrace{\left|a\right|,...,\left|a\right|}_{p\text{ times}},\left|c\right|),
\text{ and } b:=0.\]

For example if
\[B_d=(4^0,4^0,9^1,2^0,2^0,3^0),\]
then
\[(\omega,b,m)=((2,2,3,4,4,9),0,1).\]

\vspace{.25cm}
\textbf{Case 2.2, $\gamma$}\hspace{.5cm}(Case 4.2, $\gamma^{-1}$)

$i_1\leq_B ...\leq_B i_{s-1}$ are all $0$-colored and unbarred while $i_s$ is barred. Then set
\[\gamma(B):=(B',(\omega,b,0)),\]
where
\[B'=:B_1\cdot ...\cdot B_{d-1}\cdot\tilde{B_d},\]
\[\tilde{B_d}:=a^pci_{s+1}...i_l,\]
\[\omega=:(\left|i_1\right|,...,\left|i_s\right|),
\text{ and }b:=1.\]

For example if
\[B_d=(5^0,5^0,8^3,1^0,4^0,\bar{4^0},2^0,7^1),\]
then
\[\tilde{B_d}=(5^0,5^0,8^3,2^0,7^1),\]
\[(\omega,b,0)=((1,4,4),1,0).\]

\vspace{.25cm}
\textbf{Case 2.3, $\gamma$}\hspace{.5cm}(Cases 1.2 and 2.1, $\gamma^{-1}$)

$i_1\leq_B ...\leq_B i_{s-1}$ are all $0$-colored and unbarred while $i_s$ is positively colored, say $i_s$ has color $m>0$. Then set
\[\gamma(B):=(B',(\omega,b,m)),\]
where
\[B'=:B_1\cdot ...\cdot B_{d-1}\cdot\tilde{B_d},\]
\[\tilde{B_d}:=a^pci_{s+1}...i_l,\]
\[\omega:=(\left|i_1\right|,...,\left|i_s\right|),
\text{ and }b:=0.\]

For example if
\[B_d=(5^0,5^0,8^1,1^0,4^0,7^3,6^2),\]
then
\[\tilde{B_d}=(5^0,5^0,8^1,6^2),\]
\[(\omega,b,m)=((1,4,7),0,3).\]

\vspace{.25cm}
\textbf{Case 3, $\gamma$}

$B_d=a^pc,i_1,i_2,...i_l$ where $a$ is unbarred, $c\in\mathcal{B}$, and $i_1$ is barred. Again this implies $c$ must have positive color. First, find the index $r$ such that $i_1\geq_B ...\geq_B i_{r-1}$ are all barred while $i_r$ is unbarred (note $1<r\leq l$). Then find the index $s$ such that $r\leq s\leq l$ and either one of the following subcases hold:

\vspace{.25cm}
\textbf{Case 3.1, $\gamma$}\hspace{.5cm}(Case 4.3, $\gamma^{-1}$)

$i_r\leq_B ...\leq_B i_s$ are all $0$-colored, unbarred, and $\left|i_s\right|\leq\left|i_{r-1}\right|$, while $\left|i_{s+1}\right|>\left|i_{r-1}\right|$ or $s=l$. Then set
\[\gamma(B):=(B',(\omega,b,0)),\]
where
\[B':=B_1\cdot ...\cdot B_{d-1}\cdot\tilde{B_d},\]
\[\tilde{B_d}:=a^pci_{s+1}...i_l,\]
\[\omega:=(\left|i_r\right|,\left|i_{r+1}\right|,...,\left|i_s\right|,\left|i_{r-1}\right|,\left|i_{r-2}\right|,...,\left|i_1\right|),
\text{ and }b:=r-1.\]

For example if
\[B_d=(4^0,4^0,6^1,\bar{3^0},\bar{3^0},\bar{2^0},1^0,2^0,3^0),\]
then
\[\tilde{B_d}=(4^0,4^0,6^1,3^0),\]
\[(\omega,b,0)=((1,2,2,3,3),3,0).\]

\vspace{.25cm}
\textbf{Case 3.2, $\gamma$}\hspace{.5cm}(Case 4.2, $\gamma^{-1}$)

$i_r\leq_B ...\leq_B i_{s-1}$ are all $0$-colored, unbarred, and $\left|i_{s-1}\right|\leq\left|i_{r-1}\right|$, while $i_s$ is barred and $\left|i_{s}\right|\leq\left|i_{r-1}\right|$. Then set
\[\gamma(B):=(B',(\omega,b,0)),\]
where
\[B'=:B_1\cdot ...\cdot B_{d-1}\cdot\tilde{B_d},\]
\[\tilde{B_d}:=a^pci_{s+1}...i_l,\]
\[\omega:=(\left|i_r\right|,\left|i_{r+1}\right|,...,\left|i_s\right|,\left|i_{r-1}\right|,\left|i_{r-2}\right|,...,\left|i_1\right|),
\text{ and }b:=r.\]

For example if
\[B_d=(6^0,6^0,8^2,\bar{5^0},\bar{4^0},\bar{4^0},2^0,4^0,\bar{4^0},1^0,9^1),\]
then
\[\tilde{B_d}=(6^0,6^0,8^2,1^0,9^1),\]
\[(\omega,b,0)=((2,4,4,4,4,5),4,0).\]

\vspace{.25cm}
\textbf{Case 3.3, $\gamma$}\hspace{.5cm}(Case 3.2, $\gamma^{-1}$)

$i_r\leq_B ...\leq_B i_{s-1}$ are all $0$-colored, unbarred, and $\left|i_{s-1}\right|\leq\left|i_{r-1}\right|$, while $i_s$ is positively colored, say $i_s$ has color $m>0$, and $\left|i_{s}\right|\leq\left|i_{r-1}\right|$. Then set
\[\gamma(B):=(B',(\omega,b,m)),\]
where $c$ has color $m>0$, and where
\[B':=B_1\cdot ...\cdot B_{d-1}\cdot\tilde{B_d},\]
\[\tilde{B_d}:=a^pci_{s+1}...i_l,\]
\[\omega:=(\left|i_r\right|,\left|i_{r+1}\right|,...,\left|i_s\right|,\left|i_{r-1}\right|,\left|i_{r-2}\right|,...,\left|i_1\right|),
\text{ and }b:=r-1.\]

For example if
\[B_d=(5^0,8^1,\bar{4^0},\bar{2^0},1^0,1^1,1^3),\]
then
\[\tilde{B_d}=(5^0,8^1,1^3),\]
\[(\omega,b,m)=((1,1,2,4),2,1).\]

\vspace{.25cm}
\textbf{Case 4, $\gamma$}

$B_d=a^pi_1,i_2,...i_l$ where $a$ is barred and $i_1$ is unbarred. Then find the index $s$ such that $1\leq s\leq l$ and either one of the following subcases hold:

\vspace{.25cm}
\textbf{Case 4.1, $\gamma$}\hspace{.5cm}(Case 4.1, $\gamma^{-1}$)

$i_1\leq_B ...\leq_B i_l$ are all $0$-colored and unbarred, so we take $s=l$ and set
\[\gamma(B):=(B',(\omega,b,0)),\]
where
\[B':=B_1\cdot ...\cdot B_{d-1},\]
\[\omega:=(\left|i_1\right|,\left|i_2\right|,...,\left|i_l\right|,\underbrace{\left|a\right|,...,\left|a\right|}_{p\text{ times}}),
\text{ and }b:=p.\]

For example if
\[B_d=(\bar{5^0},\bar{5^0},\bar{5^0},1^0,3^0,5^0),\]
then
\[(\omega,b,0)=((1,3,5,5,5,5),3,0).\]

\vspace{.25cm}
\textbf{Case 4.2, $\gamma$}\hspace{.5cm}(Case 4.4, $\gamma^{-1}$)

$i_1\leq_B ...\leq_B i_{s-1}$ are all $0$-colored and unbarred while $i_s$ is barred. Then set
\[\gamma(B):=(B',(\omega,b,0)),\]
where
\[B':=B_1\cdot ...\cdot B_{d-1}\cdot\tilde{B_d},\]
\[\tilde{B_d}:=a^pi_{s+1}...i_l,\]
\[\omega:=(\left|i_1\right|,\left|i_2\right|,...,\left|i_s\right|),
\text{ and }b:=1.\]

For example if
\[B_d=(\bar{5^0},\bar{5^0},\bar{5^0},1^0,3^0,\bar{4^0},3^1,7^2),\]
then
\[\tilde{B_d}=(\bar{5^0},\bar{5^0},\bar{5^0},3^1,7^2),\]
\[(\omega,b,0)=((1,3,4),1,0).\]

\vspace{.25cm}
\textbf{Case 4.3, $\gamma$}\hspace{.5cm}(Cases 1.3 and 2.2, $\gamma^{-1}$)

$i_1\leq_B ...\leq_B i_{s-1}$ are all $0$-colored and unbarred while $i_s$ is positively colored, say $i_s$ has color $m>0$, and $\left|i_{s+1}\right|\leq\left|a\right|$. Then set
\[\gamma(B):=(B',(\omega,b,m)),\]
where
\[B':=B_1\cdot ...\cdot B_{d-1}\cdot\tilde{B_d},\]
\[\tilde{B_d}:=a^pi_{s+1}...i_l,\]
\[\omega:=(\left|i_1\right|,\left|i_2\right|,...,\left|i_s\right|),
\text{ and }b:=0.\]

For example if
\[B_d=(\bar{6^0},\bar{6^0},2^0,2^0,4^0,8^1,\bar{5^0},4^2),\]
then
\[\tilde{B_d}=(\bar{6^0},\bar{6^0},\bar{5^0},4^2),\]
\[(\omega,b,m)=((2,2,4,8),0,1).\]

\vspace{.25cm}
\textbf{Case 4.4, $\gamma$}\hspace{.5cm}(Case 3.1, $\gamma^{-1}$)

$i_1\leq_B ...\leq_B i_{l-1}$ are all $0$-colored and unbarred while $i_l$ is positively colored, say $i_l$ has color $m>0$, and $\left|i_l\right|\leq\left|a\right|$. Take $s=l$ and set
\[\gamma(B):=(B',(\omega,b,m)),\]
where
\[B':=B_1\cdot ...\cdot B_{d-1},\]
\[\omega:=(\left|i_1\right|,\left|i_2\right|,...,\left|i_l\right|,\underbrace{\left|a\right|,...,\left|a\right|}_{p\text{ times}}),
\text{ and }b:=p.\]

For example if
\[B_d=(\bar{6^0},\bar{6^0},2^0,2^0,4^0,5^3),\]
then
\[(\omega,b,m)=((2,2,4,5,6,6),2,3).\]

\vspace{.25cm}
\textbf{Case 4.5, $\gamma$}\hspace{.5cm}(Case 2.3, $\gamma^{-1}$)

$i_1\leq_B ...\leq_B i_{l-1}$ are all $0$-colored and unbarred while $i_l$ is positively colored, say $i_l$ has color $m>0$, and $\left|i_l\right|>\left|a\right|$. Take $s=l$ and set
\[\gamma(B):=(B',(\omega,b,m)),\]
where
\[B':=B_1\cdot ...\cdot B_{d-1}\cdot\tilde{B_d},\]
\[\tilde{B_d}:=a^pi_1...i_{l-1},\]
\[\omega:=(\left|i_l\right|),
\text{ and }b:=0.\]

For example if
\[B_d=(\bar{6^0},\bar{6^0},2^0,2^0,4^0,7^3),\]
then
\[\tilde{B_d}=(\bar{6^0},\bar{6^0},2^0,2^0,4^0),\]
\[(\omega,b,m)=((7),0,3).\]

\vspace{.25cm}
\textbf{Case 4.6, $\gamma$}\hspace{.5cm}(Case 2.3, $\gamma^{-1}$)

$i_1\leq_B ...\leq_B i_{s-1}$ are all $0$-colored and unbarred while $i_s, i_{s+1}$ are both positively colored, say $i_{s+1}$ has color $m>0$, and $\left|i_{s+1}\right|>\left|a\right|$. Then set
\[\gamma(B):=(B',(\omega,b,m)),\]
where
\[B':=B_1\cdot ...\cdot B_{d-1}\cdot\tilde{B_d},\]
\[\tilde{B_d}:=a^pi_1...i_si_{s+2}i_{s+3}...i_{l},\]
\[\omega:=(\left|i_{s+1}\right|),
\text{ and }b:=0.\]

For example if
\[B_d=(\bar{6^0},\bar{6^0},2^0,2^0,4^0,7^3,8^2),\]
then
\[\tilde{B_d}=(\bar{6^0},\bar{6^0},2^0,2^0,4^0,7^3),\]
\[(\omega,b,m)=((8),0,2).\]

\vspace{.25cm}
\textbf{Case 5, $\gamma$}

$B_d=a^pi_1,i_2,...i_l$ where $a$ and $i_1$ are barred. First, find the index $r$ such that $i_1\geq_B ...\geq_B i_{r-1}$ are all barred while $i_r$ is unbarred (note $1<r\leq l$). Then find the index $s$ such that $r\leq s\leq l$ and either one of the following subcases hold:

\vspace{.25cm}
\textbf{Case 5.1, $\gamma$}\hspace{.5cm}(Case 4.1, $\gamma^{-1}$)

$i_r\leq_B ...\leq_B i_l$ are all $0$-colored, unbarred, and $\left|i_l\right|\leq\left|i_{r-1}\right|$. Then we take $s=l$ and set
\[\gamma(B):=(B',(\omega,b,0)),\]
where
\[B':=B_1\cdot ...\cdot B_{d-1},\]
\[\omega:=((\left|i_r\right|,\left|i_{r+1}\right|,...,\left|i_l\right|,\left|i_{r-1}\right|,\left|i_{r-2}\right|,...,\left|i_1\right|,\underbrace{\left|a\right|,...,\left|a\right|}_{p\text{ times}}),
\text{ and }b:=p+r-1.\]

For example if
\[B_d=(\bar{7^0},\bar{7^0},\bar{6^0},\bar{4^0},1^0,2^0,4^0),\]
then
\[(\omega,b,0)=((1,2,4,4,6,7,7),4,0).\]

\vspace{.25cm}
\textbf{Case 5.2, $\gamma$}\hspace{.5cm}(Case 4.5, $\gamma^{-1}$)

$i_r\leq_B ...\leq_B i_s$ are all $0$-colored, unbarred, $\left|i_s\right|\leq\left|i_{r-1}\right|$, and $\left|i_{r-1}\right|<\left|i_{s+1}\right|\leq\left|a\right|$. Then set
\[\gamma(B):=(B',(\omega,b,0)),\]
where
\[B':=B_1\cdot ...\cdot B_{d-1}\cdot\tilde{B_d},\]
\[\tilde{B_d}:=a^pi_{s+1}...i_l,\]
\[\omega:=((\left|i_r\right|,\left|i_{r+1}\right|,...,\left|i_s\right|,\left|i_{r-1}\right|,\left|i_{r-2}\right|,...,\left|i_1\right|),
\text{ and }b:=r-1.\]

For example if
\[B_d=(\bar{7^0},\bar{7^0},\bar{6^0},\bar{4^0},1^0,2^0,5^0,8^2,1^0),\]
then
\[\tilde{B_d}:=(\bar{7^0},\bar{7^0},5^0,8^2,1^0),\]
\[(\omega,b,0)=((1,2,4,6),2,0).\]

\vspace{.25cm}
\textbf{Case 5.3, $\gamma$}\hspace{.5cm}(Case 2.4, $\gamma^{-1}$)

$i_r\leq_B ...\leq_B i_s$ are all $0$-colored, unbarred, $\left|i_s\right|\leq\left|i_{r-1}\right|$, and $i_{s+1}$ is positively colored, say $i_{s+1}$ has color $m>0$, with $\left|i_{s+1}\right|>\left|a\right|$. If $\left|i_{r-1}\right|\geq\left|i_{s+2}\right|$, then set
\[\gamma(B):=(B',(\omega,b,m)),\]
where
\[B':=B_1\cdot ...\cdot B_{d-1}\cdot\tilde{B_d},\]
\[\tilde{B_d}:=a^pi_1...i_{r-2}i_ri_{r+1}...i_si_{r-1}i_{s+2}i_{s+3}...i_l,\]
\[\omega:=(\left|i_{s+1}\right|),
\text{ and }b:=0.\]

For example if
\[B_d=(\bar{7^0},\bar{7^0},\bar{6^0},\bar{4^0},1^0,2^0,8^1,1^0),\]
then
\[\tilde{B_d}=(\bar{7^0},\bar{7^0},\bar{6^0},1^0,2^0,\bar{4^0},1^0),\]
\[(\omega,b,m)=((8),0,1).\]

\vspace{.25cm}
\textbf{Case 5.4, $\gamma$}\hspace{.5cm}(Case 2.3, $\gamma^{-1}$)

$i_r\leq_B ...\leq_B i_s$ are all $0$-colored, unbarred, $\left|i_s\right|\leq\left|i_{r-1}\right|$, and $i_{s+1}$ is positively colored, say $i_{s+1}$ has color $m>0$, with $\left|i_{s+1}\right|>\left|a\right|$. If $\left|i_{r-1}\right|<\left|i_{s+2}\right|$ or if $s+1=l$, then set
\[\gamma(B):=(B',(\omega,b,m)),\]
where
\[B':=B_1\cdot ...\cdot B_{d-1}\cdot\tilde{B_d},\]
\[\tilde{B_d}:=a^pi_1...i_si_{s+2}i_{s+3}...i_l,\]
\[\omega:=(\left|i_{s+1}\right|),
\text{ and }b:=0.\]

For example if
\[B_d=(\bar{7^0},\bar{7^0},\bar{6^0},\bar{4^0},1^0,2^0,8^1,5^0),\]
then
\[\tilde{B_d}=(\bar{7^0},\bar{7^0},\bar{6^0},\bar{4^0},1^0,2^0,5^0),\]
\[(\omega,b,m)=((8),0,1).\]

\vspace{.25cm}
\textbf{Case 5.5, $\gamma$}\hspace{.5cm}(Case 4.4, $\gamma^{-1}$)

$i_r\leq_B ...\leq_B i_{s-1}$ are all $0$-colored and unbarred while $i_s$ is barred with $\left|i_s\right|\leq\left|i_{r-1}\right|$. Then set
\[\gamma(B):=(B',(\omega,b,0)),\]
where
\[B':=B_1\cdot ...\cdot B_{d-1}\cdot\tilde{B_d},\]
\[\tilde{B_d}:=a^pi_{s+1}...i_l,\]
\[\omega:=(\left|i_r\right|,\left|i_{r+1}\right|,...,\left|i_s\right|,\left|i_{r-1}\right|,\left|i_{r-2}\right|,...,\left|i_1\right|),
\text{ and }b:=r.\]

For example if
\[B_d=(\bar{6^0},\bar{6^0},\bar{5^0},1^0,2^0,2^0,\bar{4^0},\bar{2^0},2^0),\]
then
\[\tilde{B_d}=(\bar{6^0},\bar{6^0},\bar{2^0},2^0),\]
\[(\omega,b,0)=((1,2,2,4,5),2,0).\]

\vspace{.25cm}
\textbf{Case 5.6, $\gamma$}\hspace{.5cm}(Case 3.1, $\gamma^{-1}$)

$i_r\leq_B ...\leq_B i_{l-1}$ are all $0$-colored and unbarred while $i_l$ is positively colored, say $i_l$ has color $m>0$, with $\left|i_l\right|\leq\left|i_{r-1}\right|$. Then take $s=l$ and set
\[\gamma(B):=(B',(\omega,b,m)),\]
where
\[B':=B_1\cdot ...\cdot B_{d-1},\]
\[\omega:=(\left|i_r\right|,\left|i_{r+1}\right|,...,\left|i_l\right|,\left|i_{r-1}\right|,\left|i_{r-2}\right|,...,\left|i_1\right|,\underbrace{\left|a\right|,...,\left|a\right|}_{p\text{ times}}),
\text{ and }b:=p+r-1.\]

For example if
\[B_d=(\bar{6^0},\bar{6^0},\bar{5^0},1^0,2^0,2^0,4^5),\]
then
\[(\omega,b,m)=((1,2,2,4,5,6,6),3,5).\]

\vspace{.25cm}
\textbf{Case 5.7, $\gamma$}\hspace{.5cm}(Case 3.3, $\gamma^{-1}$)

$i_r\leq_B ...\leq_B i_{s-1}$ are all $0$-colored and unbarred while $i_s$ is positively colored, say $i_s$ has color $m>0$, with $\left|i_s\right|\leq\left|i_{r-1}\right|$. If $\left|i_{s+1}\right|\leq\left|a\right|$, then set
\[\gamma(B):=(B',(\omega,b,m)),\]
where
\[B'=:B_1\cdot ...\cdot B_{d-1}\cdot\tilde{B_d},\]
\[\tilde{B_d}:=a^pi_{s+1}...i_l,\]
\[\omega:=(\left|i_r\right|,\left|i_{r+1}\right|,...,\left|i_s\right|,\left|i_{r-1}\right|,\left|i_{r-2}\right|,...,\left|i_1\right|),
\text{ and }b:=r-1.\]

For example if
\[B_d=(\bar{8^0},\bar{8^0},\bar{5^0},1^0,2^0,2^0,4^5,7^3),\]
then
\[\tilde{B_d}=(\bar{8^0},\bar{8^0},7^3),\]
\[(\omega,b,m)=((1,2,2,4,5),1,5).\]

\vspace{.25cm}
\textbf{Case 5.8, $\gamma$}\hspace{.5cm}(Case 2.3, $\gamma^{-1}$)

$i_r\leq_B ...\leq_B i_{s-1}$ are all $0$-colored and unbarred while $i_s$ is positively colored with $\left|i_s\right|\leq\left|i_{r-1}\right|$. If $\left|i_{s+1}\right|>\left|a\right|$, then $i_{s+1}$ must be positively colored, say $i_{s+1}$ has color $m>0$. Then set
\[\gamma(B):=(B',(\omega,b,m)),\]
where
\[B':=B_1\cdot ...\cdot B_{d-1}\cdot\tilde{B_d},\]
\[\tilde{B_d}:=a^pi_1...i_si_{s+2}i_{s+3}...i_l,\]
\[\omega:=(\left|i_{s+1}\right|),
\text{ and }b:=0.\]

For example if
\[B_d=(\bar{8^0},\bar{8^0},\bar{5^0},1^0,2^0,2^0,4^5,9^3),\]
then
\[\tilde{B_d}=(\bar{8^0},\bar{8^0},\bar{5^0},1^0,2^0,2^0,4^5),\]
\[(\omega,b,m)=((9),0,3).\]

\vspace{.25cm}
This completes the description of the map $\gamma$. Next we describe $\gamma^{-1}$. Suppose we are given a banner $B$ with increasing factorization $B=B_1\cdot ...\cdot B_d$ where $B_d=a^pj_1...j_l$, and an $m$-colored marked sequence $(\omega,b,m)$ where $0\leq m\leq N-1$ and $\omega=(\omega_1,...,\omega_q)$. Here the letter $a$ may have any color, and we do not specify its color. For this letter only we use the superscript $p$ to denote that $a$ is repeated $p$ times where $p>0$. 

\vspace{.25cm}
\textbf{Case 1, $\gamma^{-1}$}

Suppose $m>0$, $b=0$, $q>1$, and one of the following subcases hold:

\vspace{.25cm}
\textbf{Case 1.1, $\gamma^{-1}$}\hspace{.5cm}(Cases 1 and 2.1, $\gamma$)

$\omega_{q-1}^0\geq_B a$. If $\omega_{q-1}$ appears $q-1$ or $q$ times in the sequence $\omega$, then set
\[B_{d+1}:=\underbrace{\omega_{q-1}^0...\omega_{q-1}^0}_{q-1 \text{ times}}\omega_q^m.\]
Otherwise, $\omega_{q-1}$ appears $r$ times with $r<q-1$ and we set
\[B_{d+1}:=\underbrace{\omega_{q-1}^0...\omega_{q-1}^0}_{r\text{ times}}\omega_q^m\omega_1^0\omega_2^0...\omega_{q-r-1}^0.\]
In either case set
\[\gamma^{-1}\left(B,(\omega,b,m)\right):=B_1\cdot ...\cdot B_d\cdot B_{d+1}.\]

For an example of this case (and for most of the cases below), refer to corresponding case of $\gamma$.

\vspace{.25cm}
\textbf{Case 1.2, $\gamma^{-1}$}\hspace{.5cm}(Case 2.3, $\gamma$)

$\omega_{q-1}^0<_B a$, and $a$ is unbarred. Then set
\[\tilde{B_d}:=a^pj_1\omega_1^0\omega_2^0...\omega_{q-1}^0\omega_q^mj_2...j_l,\]
\[\gamma^{-1}\left(B,(\omega,b,m)\right):=B_1\cdot ...\cdot B_{d-1}\cdot\tilde{B_d}.\]

\vspace{.25cm}
\textbf{Case 1.3, $\gamma^{-1}$}\hspace{.5cm}(Case 4.3, $\gamma$)

$\omega_{q-1}^0<_B a$, and $a$ is barred. Then set
\[\tilde{B_d}:=a^p\omega_1^0\omega_2^0...\omega_{q-1}^0\omega_q^mj_1j_2...j_l,\]
\[\gamma^{-1}\left(B,(\omega,b,m)\right):=B_1\cdot ...\cdot B_{d-1}\cdot\tilde{B_d}.\]

\vspace{.25cm}
\textbf{Case 2, $\gamma^{-1}$}

Suppose $m>0$, $b=0$, $q=1$, and one of the following subcases hold:

\vspace{.25cm}
\textbf{Case 2.1, $\gamma^{-1}$}\hspace{.5cm}(Case 2.3, $\gamma$)

$a$ is unbarred. Then set
\[\tilde{B_d}:=a^pj_1\omega_1^mj_2...j_l,\]
\[\gamma^{-1}\left(B,(\omega,b,m)\right):=B_1\cdot ...\cdot B_{d-1}\cdot\tilde{B_d}.\]

For example if
\[B_d=(5^0,5^0,5^0,8^1,2^3,4^1),\]
\[(\omega,b,m)=((4),0,2),\]
then
\[\tilde{B_d}=(5^0,5^0,5^0,8^1,4^2,2^3,4^1).\]

\vspace{.25cm}
\textbf{Case 2.2, $\gamma^{-1}$}\hspace{.5cm}(Case 4.3, $\gamma$)

$a$ is barred and $\omega_1\leq\left|a\right|$. Then set
\[\tilde{B_d}:=a^p\omega_1^mj_1j_2...j_l,\]
\[\gamma^{-1}\left(B,(\omega,b,m)\right):=B_1\cdot ...\cdot B_{d-1}\cdot\tilde{B_d}.\]

For example if
\[B_d=(\bar{6^0},\bar{6^0},\bar{4^0},2^2,1^0,3^1),\]
\[(\omega,b,m)=((5),0,3),\]
then
\[\tilde{B_d}=(\bar{6^0},\bar{6^0},5^3,\bar{4^0},2^2,1^0,3^1).\]

\vspace{.25cm}
\textbf{Case 2.3, $\gamma^{-1}$}\hspace{.5cm}(Cases 4.5, 4.6, 5.4, 5.8, $\gamma$)

$a$ is barred, $\omega_1>\left|a\right|$, and we find the index $s$ such that $1\leq s\leq l$ and one of the following subcases hold:

\textbf{Case 2.3.1} $j_1\leq_B ...\leq_B j_l$ are all $0$-colored and unbarred, so we take $s=l$.

\textbf{Case 2.3.2} $j_1\leq_B ...\leq_B j_{s-1}$ are all $0$-colored and unbarred while $j_s$ is positively colored.

\textbf{Case 2.3.3} $j_1\geq_B ...\geq_B j_{r-1}$ are all barred while $j_r\leq_B ...\leq_B j_l$ are all $0$-colored, unbarred, and $|j_l|\leq|j_{r-1}|$. Then take $s=l$.

\textbf{Case 2.3.4} $j_1\geq_B ...\geq_B j_{r-1}$ are all barred while $j_r\leq_B ...\leq_B j_s$ are all $0$-colored, unbarred, $|j_s|\leq|j_{r-1}|,$ and $|j_{s+1}|>|j_{r-1}|$.


\textbf{Case 2.3.5} $j_1\geq_B ...\geq_B j_{r-1}$ are all barred while $j_r\leq_B ...\leq_B j_{s-1}$ are all $0$-colored, unbarred, and $j_s$ is positively colored with $|j_s|\leq|j_{r-1}|$.

Once the index $s$ is found, we set
\[\tilde{B_d}:=a^pj_1j_2...j_s\omega_1^mj_{s+1}j_{s+2}...j_l,\]
and
\[\gamma^{-1}\left(B,(\omega,b,m)\right):=B_1\cdot ...\cdot B_{d-1}\cdot\tilde{B_d}.\]
Note that in Cases 2.3.1-2.3.5, $j_s$ is an unbarred letter, thus the banner rules in Definition \ref{defn 4.3} are not violated.

\vspace{.25cm}
\textbf{Case 2.4, $\gamma^{-1}$}\hspace{.5cm}(Case 5.3, $\gamma$)

$a$ is barred, $\omega_1>\left|a\right|$, and we find the index $s$ such that $1\leq s\leq l$ and one of the following subcases hold:


\textbf{Case 2.4.1} $j_1\leq_B ...\leq_B j_{s-1}$ are all $0$-colored and unbarred while $j_s$ is barred. Then set

\[\tilde{B_d}:=a^pj_sj_1j_{2}...j_{s-1}\omega_1^mj_{s+1}j_{s+2}...j_l,\]
\[\gamma^{-1}\left(B,(\omega,b,m)\right):=B_1\cdot ...\cdot B_{d-1}\cdot\tilde{B_d}.\]

\textbf{Case 2.4.2} $j_1\geq_B ...\geq_B j_{r-1}$ are all barred while $j_r\leq_B ...\leq_B j_{s-1}$ are all $0$-colored, unbarred, and $j_s$ is barred with $|j_s|\leq|j_{r-1}|$. Then set

\[\tilde{B_d}:=a^pj_1j_2...j_{r-1}j_sj_rj_{r+1}...j_{s-1}\omega_1^mj_{s+1}j_{s+2}...j_l,\]
\[\gamma^{-1}\left(B,(\omega,b,m)\right):=B_1\cdot ...\cdot B_{d-1}\cdot\tilde{B_d}.\]

\vspace{.25cm}
\textbf{Case 3, $\gamma^{-1}$}

Suppose $m>0$, $b>0$, and one of the following subcases hold:

\vspace{.25cm}
\textbf{Case 3.1, $\gamma^{-1}$}\hspace{.5cm}(Cases 4.4 and 5.6, $\gamma$)

$\bar{\omega^0}_q\geq_B a$, then set
\[B_{d+1}:=\bar{\omega^0}_q\bar{\omega^0}_{q-1}...\bar{\omega^0}_{q-b+1}\omega_1^0\omega_2^0...\omega_{q-b-1}^0\omega_{q-b}^m,\]
\[\gamma^{-1}\left(B,(\omega,b,m)\right):=B_1\cdot ...\cdot B_d\cdot B_{d+1}.\]

\vspace{.25cm}
\textbf{Case 3.2, $\gamma^{-1}$}\hspace{.5cm}(Case 3.3, $\gamma$)

$\bar{\omega^0}_q<_B a$, and $a$ is unbarred. Then set
\[\tilde{B_d}:=a^pj_1\bar{\omega^0}_q\bar{\omega^0}_{q-1}...\bar{\omega^0}_{q-b+1}\omega_1^0\omega_2^0...\omega_{q-b-1}^0\omega_{q-b}^mj_2...j_l,\]
\[\gamma^{-1}\left(B,(\omega,b,m)\right):=B_1\cdot ...\cdot B_{d-1}\cdot\tilde{B_d}.\]

\vspace{.25cm}
\textbf{Case 3.3, $\gamma^{-1}$}\hspace{.5cm}(Case 5.7, $\gamma$)

$\bar{\omega^0}_q<_B a$, and $a$ is barred. Then set
\[\tilde{B_d}:=a^p\bar{\omega^0}_q\bar{\omega^0}_{q-1}...\bar{\omega^0}_{q-b+1}\omega_1^0\omega_2^0...\omega_{q-b-1}^0\omega_{q-b}^mj_1j_2...j_l,\]
\[\gamma^{-1}\left(B,(\omega,b,m)\right):=B_1\cdot ...\cdot B_{d-1}\cdot\tilde{B_d}.\]

\vspace{.25cm}
\textbf{Case 4, $\gamma^{-1}$}

Suppose $m=0$, and one of the following subcases hold:

\vspace{.25cm}
\textbf{Case 4.1, $\gamma^{-1}$}\hspace{.5cm}(Cases 4.1 and 5.1, $\gamma$)

$\bar{\omega^0}_q\geq_B a$, then set
\[B_{d+1}:=\bar{\omega^0}_q\bar{\omega^0}_{q-1}...\bar{\omega^0}_{q-b+1}\omega_1^0\omega_2^0...\omega_{q-b}^0,\]
\[\gamma^{-1}\left(B,(\omega,b,m)\right):=B_1\cdot ...\cdot B_d\cdot B_{d+1}.\]

\vspace{.25cm}
\textbf{Case 4.2, $\gamma^{-1}$}\hspace{.5cm}(Cases 2.2 and 3.2, $\gamma$)

$\bar{\omega^0}_q<_B a$, $a$ is unbarred, and $\omega_{q-b+1}\geq\left|j_2\right|$. Then set
\[\tilde{B_d}:=a^pj_1\bar{\omega^0}_q\bar{\omega^0}_{q-1}...\bar{\omega^0}_{q-b+2}\omega_1^0\omega_2^0...\omega_{q-b}^0\bar{\omega^0}_{q-b+1}j_2...j_l,\]
\[\gamma^{-1}\left(B,(\omega,b,m)\right):=B_1\cdot ...\cdot B_{d-1}\cdot\tilde{B_d}.\]

\vspace{.25cm}
\textbf{Case 4.3, $\gamma^{-1}$}\hspace{.5cm}(Case 3.1, $\gamma$)

$\bar{\omega^0}_q<_B a$, $a$ is unbarred, and $\omega_{q-b+1}<\left|j_2\right|$. Then set
\[\tilde{B_d}:=a^pj_1\bar{\omega^0}_q\bar{\omega^0}_{q-1}...\bar{\omega^0}_{q-b+1}\omega_1^0\omega_2^0...\omega_{q-b}^0j_2...j_l,\]
\[\gamma^{-1}\left(B,(\omega,b,m)\right):=B_1\cdot ...\cdot B_{d-1}\cdot\tilde{B_d}.\]

\vspace{.25cm}
\textbf{Case 4.4, $\gamma^{-1}$}\hspace{.5cm}(Cases 4.2 and 5.5, $\gamma$)

$\bar{\omega^0}_q<_B a$, $a$ is barred, and $\omega_{q-b+1}\geq\left|j_1\right|$. Then set
\[\tilde{B_d}:=a^p\bar{\omega^0}_q\bar{\omega^0}_{q-1}...\bar{\omega^0}_{q-b+2}\omega_1^0\omega_2^0...\omega_{q-b}^0\bar{\omega^0}_{q-b+1}j_1j_2...j_l,\]
\[\gamma^{-1}\left(B,(\omega,b,m)\right):=B_1\cdot ...\cdot B_{d-1}\cdot\tilde{B_d}.\]

\vspace{.25cm}
\textbf{Case 4.5, $\gamma^{-1}$}\hspace{.5cm}(Case 5.2, $\gamma$)

$\bar{\omega^0}_q<_B a$, $a$ is barred, and $\omega_{q-b+1}<\left|j_1\right|$. Then set
\[\tilde{B_d}:=a^p\bar{\omega^0}_q\bar{\omega^0}_{q-1}...\bar{\omega^0}_{q-b+1}\omega_1^0\omega_2^0...\omega_{q-b}^0j_1j_2...j_l,\]
\[\gamma^{-1}\left(B,(\omega,b,m)\right):=B_1\cdot ...\cdot B_{d-1}\cdot\tilde{B_d}.\]

This completes the description of $\gamma^{-1}$. One can check case by case that both maps are well-defined and in fact inverses of each other.
\end{proof}

\section{Recurrence and closed formulas}

In this section we present some recurrence and closed form formulas which are equivalent to Theorems \ref{thm 1.1} and \ref{thm 1.4}.

\begin{cor} \label{rec1}
Let $Q_n(t,r,s)$ denote
\[Q_n(t,r,s):=\sum_{\substack{j\geq 0 \\ \vec{\alpha}\in\mathbb{N}^N \\
\vec{\beta}\in\mathbb{N}^{N-1}}}
Q_{n,j,\vec{\alpha},\vec{\beta}}t^jr^{\vec{\alpha}}s^{\vec{\beta}}.\]

Then for $n\geq 1$, $Q_n(t,r,s)$ satisfies the following recurrence relation
\[Q_n(t,r,s)=\left(
\sum_{\substack{\vec{\mu}\in\mathbb{N}^{N} \\
\vec{\nu}\in\mathbb{N}^{N-1} \\
\left|\vec{\mu}\right|+\left|\vec{\nu}\right|=n}}
(-1)^{\left|\vec{\nu}\right|}
h_{\mu}e_{\nu}r^{\vec{\mu}}\prod_{m=1}^{N-1}s_m^{\nu_m+\mu_m}\right)\]
\[+\sum_{k=0}^{n-1}Q_k(t,r,s)h_{n-k}\left(t\left[n-k-1\right]_t
+\left[n-k\right]_t\left(\sum_{m=1}^{N-1}s_m\right)\right).\]

\end{cor}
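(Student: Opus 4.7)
The plan is to read the recurrence as the coefficient extraction, with respect to $z^n$, of an equivalent restatement of the generating function identity in Theorem~\ref{thm 1.1}. Define $F(z):=\sum_{n\geq 0}Q_n(t,r,s)\,z^n$, so that Theorem~\ref{thm 1.1} asserts
\[
F(z)\Bigl[(1+\textstyle\sum_{m=1}^{N-1}s_m)H(tz)-(t+\sum_{m=1}^{N-1}s_m)H(z)\Bigr]
= (1-t)\,H(r_0z)\prod_{m=1}^{N-1}E(-s_mz)H(r_ms_mz).
\]
The first step is to invoke the denominator computation carried out inside the proof of Corollary~\ref{cor 3.6}, which showed
\[
\bigl(1+\textstyle\sum_{m=1}^{N-1}s_m\bigr)H(tz)-\bigl(t+\sum_{m=1}^{N-1}s_m\bigr)H(z) = (1-t)(1-A-B),
\]
with $A:=\sum_{n\geq 2}t[n-1]_t h_nz^n$ and $B:=\sum_{m=1}^{N-1}\sum_{n\geq 1}s_m[n]_t h_nz^n$. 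Since the factor $(1-t)$ appears on both sides of the identity above, it cancels, leaving
\[
F(z)(1-A-B)=H(r_0z)\prod_{m=1}^{N-1}E(-s_mz)H(r_ms_mz).
\]

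Second, I would rearrange this as $F(z)=H(r_0z)\prod_{m=1}^{N-1}E(-s_mz)H(r_ms_mz)+F(z)(A+B)$ and extract the coefficient of $z^n$ on both sides. The contribution from $F(z)(A+B)$ yields exactly
\[
\sum_{k=0}^{n-1}Q_k(t,r,s)\,h_{n-k}\Bigl(t[n-k-1]_t+[n-k]_t\sum_{m=1}^{N-1}s_m\Bigr),
\]
since $[z^{n-k}](A+B)=h_{n-k}(t[n-k-1]_t+[n-k]_t\sum s_m)$ (with $t[{-1}]_t=0$ and $t[0]_t=0$ handled correctly for $k=n-1$). This matches the second line of the stated recurrence verbatim.

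Third, for the non-recursive ``boundary'' term, I would expand the product $H(r_0z)\prod_{m=1}^{N-1}E(-s_mz)H(r_ms_mz)$ as
\[
\sum_{\mu_0\geq 0}h_{\mu_0}r_0^{\mu_0}z^{\mu_0}\prod_{m=1}^{N-1}\Bigl(\sum_{\nu_m\geq 0}(-1)^{\nu_m}e_{\nu_m}s_m^{\nu_m}z^{\nu_m}\Bigr)\Bigl(\sum_{\mu_m\geq 0}h_{\mu_m}r_m^{\mu_m}s_m^{\mu_m}z^{\mu_m}\Bigr),
\]
and collect the coefficient of $z^n$ by summing over pairs $(\vec\mu,\vec\nu)\in\mathbb{N}^N\times\mathbb{N}^{N-1}$ with $|\vec\mu|+|\vec\nu|=n$. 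This produces $\sum(-1)^{|\vec\nu|}h_{\vec\mu}e_{\vec\nu}r^{\vec\mu}\prod_{m=1}^{N-1}s_m^{\nu_m+\mu_m}$, matching the first line of the recurrence.

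There is no real obstacle here: the content is the denominator identity already proved in Corollary~\ref{cor 3.6}, together with bookkeeping of a straightforward coefficient extraction. The only minor subtlety to flag explicitly is that the $(1-t)$ in the numerator of Theorem~\ref{thm 1.1} is what cancels against the $(1-t)$ factored out of the denominator, which is what allows the boundary term to take the clean product form appearing in the statement.
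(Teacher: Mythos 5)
Your proof is correct and follows essentially the same route as the paper's: both start from Theorem~\ref{thm 1.1}, use the denominator identity proved in Corollary~\ref{cor 3.6}, and extract the coefficient of $z^n$. The only cosmetic difference is that you cancel the $(1-t)$ and isolate $Q_n(t,r,s)$ before extracting coefficients, whereas the paper keeps the $k=n$ term in the sum by adopting the convention $[-1]_t := -t^{-1}$ and then solves for $Q_n$; your parenthetical remark that ``$t[-1]_t=0$'' is harmless but unnecessary, since in your rearranged form the $k=n$ term never appears.
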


\begin{proof}
From Theorem \ref{thm 1.1} and the proof of Corollary \ref{cor 3.6} we have
\[\sum_{n\geq 0}Q_n(t,r,s)z^n
=\frac{-H(r_0z)\left(\prod_{m=1}^{N-1}E(-s_mz)H(r_ms_mz)\right)}
{\sum_{n\geq 0}\left(t\left[n-1\right]_t+\left[n\right]_t
\left(\sum_{m=1}^{N-1}s_m\right)\right)h_nz^n},\]
where $\left[-1\right]_t:=-t^{-1}$. Thus
\[\left(\sum_{n\geq 0}Q_n(t,r,s)z^n\right)
\left(\sum_{n\geq 0}\left(t\left[n-1\right]_t+\left[n\right]_t
\left(\sum_{m=1}^{N-1}s_m\right)\right)h_nz^n\right)\]
\[=-H(r_0z)\left(\prod_{m=1}^{N-1}E(-s_mz)H(r_ms_mz)\right).\]

Next we take the coefficient of $z^n$ on both sides,
\[\sum_{k=0}^{n}Q_k(t,r,s)h_{n-k}\left(t\left[n-k-1\right]_t
+\left[n-k\right]_t\left(\sum_{m=1}^{N-1}s_m\right)\right)\]
\[=\sum_{\substack{\vec{\mu}\in\mathbb{N}^{N} \\
\vec{\nu}\in\mathbb{N}^{N-1} \\
\left|\vec{\mu}\right|+\left|\vec{\nu}\right|=n}}
(-1)^{\left|\vec{\nu}\right|+1}
h_{\mu}e_{\nu}r^{\vec{\mu}}\prod_{m=1}^{N-1}s_m^{\nu_m+\mu_m}.\]

Solving for $Q_n(t,r,s)$ yields the desired recurrence.

\end{proof}

\begin{cor} \label{closed1}

For $n\geq 1$ we have
\[Q_n(t,r,s)=\sum_{l=0}^{n-1}\sum_{\substack{k_0,...,k_l\geq 1 \\
\sum k_i=n}}
P_{k_l}\left(\prod_{j=0}^{l-1}h_{k_j}C_{k_j}\right)
+\left(\prod_{i=0}^{l}h_{k_i}C_{k_i}\right),\]
where
\[P_k=P_k(r,s,\xx):=\sum_{\substack{\vec{\mu}\in\mathbb{N}^{N} \\
\vec{\nu}\in\mathbb{N}^{N-1} \\
\left|\vec{\mu}\right|+\left|\vec{\nu}\right|=k}}
(-1)^{\left|\vec{\nu}\right|}
h_{\mu}e_{\nu}r^{\vec{\mu}}\prod_{m=1}^{N-1}s_m^{\nu_m+\mu_m}\]
and
\[C_k=C_k(t,s):=t\left[k-1\right]_t+\left[k\right]_t\left(\sum_{m=1}^{N-1}s_m\right).\]

\end{cor}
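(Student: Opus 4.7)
The plan is to convert the recurrence from Corollary \ref{rec1} into a generating function identity and then expand geometrically. I would set $F(z):=\sum_{n\geq 0} Q_n(t,r,s)\,z^n$, $P(z):=\sum_{n\geq 0} P_n\,z^n$, and $D(z):=\sum_{m\geq 1} h_m C_m\,z^m$. First I would verify that $P_0=1=Q_0$ (the only term with $|\vec\mu|+|\vec\nu|=0$ is the trivial one, and the empty word gives $Q_0=1$), so the recurrence of Corollary \ref{rec1} extends to $n=0$ with the empty-sum convention. Translated to generating functions this reads $F(z)=P(z)+D(z)F(z)$, hence
\[F(z)=\frac{P(z)}{1-D(z)}=P(z)\sum_{l\geq 0}D(z)^l.\]

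To match the exact form stated in Corollary \ref{closed1}, I would split $P(z)=1+\widehat P(z)$ where $\widehat P(z):=\sum_{n\geq 1}P_n z^n$, producing two series $\sum_{l\geq 0}D(z)^l$ and $\widehat P(z)\sum_{l\geq 0}D(z)^l$. Since $D$ and $\widehat P$ both vanish at $z=0$, for fixed $n\geq 1$ only $l\leq n-1$ contribute to $[z^n]$ in each piece. Extracting $[z^n]$ from the first series and reindexing $l\mapsto l+1$ gives the ``all-$hC$'' contribution $\prod_{i=0}^{l}h_{k_i}C_{k_i}$ summed over compositions $(k_0,\ldots,k_l)$ of $n$ with each $k_i\geq 1$. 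Extracting $[z^n]$ from the second series and letting the $\widehat P$-factor contribute the index $k_l$ while the $l$ copies of $D(z)$ contribute $k_0,\ldots,k_{l-1}$ gives $P_{k_l}\prod_{j=0}^{l-1}h_{k_j}C_{k_j}$ over exactly the same set of compositions. Combining the two contributions under a single double sum yields the displayed formula.

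There is essentially no obstacle here: the heavy lifting has already been done in Corollary \ref{rec1}, and what remains is the standard $1/(1-D(z))$ expansion applied to a linear recurrence. The one point that requires care is the bookkeeping to arrange the two kinds of summands (those ending in $P_{k_l}$ and those consisting purely of $hC$-products) so that they are indexed by the same set of compositions $(k_0,\ldots,k_l)$ of $n$; this matching is what forces the somewhat unusual presentation as a single double sum with two terms inside.
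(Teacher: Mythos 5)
Your proof is correct, and it takes a genuinely different route from the paper's. The paper proves the closed form by \emph{verification}: it substitutes the claimed right-hand side into the recurrence of Corollary~\ref{rec1} (rewritten in the form $Q_n=P_n+h_nC_n+\sum_{k=1}^{n-1}Q_k h_{n-k}C_{n-k}$) and checks, by reindexing and recombining the double sums, that the expression reproduces itself. This is essentially a strong-induction check that presupposes knowing the answer. You instead \emph{derive} the closed form: you pass to the ordinary generating functions $F(z)=\sum Q_n z^n$, $P(z)=\sum P_n z^n$, $D(z)=\sum_{m\geq1}h_mC_m z^m$, note $P_0=Q_0=1$ so the recurrence gives $F=P+DF$, and expand $F=P/(1-D)$ as a geometric series, then split $P=1+\widehat P$ to separate the two kinds of summands and extract $[z^n]$. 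The bookkeeping step you flag — matching the pure $hC$-compositions (reindexed from $l\geq1$ down to $l'\geq0$) with the $P_{k_l}$-terminated compositions so both range over the same compositions $(k_0,\ldots,k_l)$ of $n$ with $0\leq l\leq n-1$ — is exactly right and is what produces the paper's slightly unusual presentation as a single double sum with two terms inside. Your derivational approach is arguably cleaner and explains where the formula comes from; the paper's approach avoids generating function manipulation but requires the formula to be guessed first. Both are standard and both are correct.
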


\begin{proof}

Using the above notation, Corollary \ref{rec1} can be written as
\[Q_n(t,r,s)=P_n+\sum_{k=0}^{n-1}Q_{k}(t,r,s)h_{n-k}C_{n-k}\]
\[=P_n+h_nC_n+\sum_{k=1}^{n-1}Q_{k}(t,r,s)h_{n-k}C_{n-k}.\]

And now we show that the right hand side of Corollary \ref{closed1} satisfies the same recurrence. Indeed
\[\hspace{0cm} P_n+h_nC_n+\sum_{k=1}^{n-1}\left[
\sum_{l=1}^{k}\sum_{\substack{k_1,...,k_l\geq 1 \\ \sum k_i=k}}
P_{k_l}\left(\prod_{j=1}^{l-1}h_{k_j}C_{k_j}\right)
+\left(\prod_{i=1}^{l}h_{k_i}C_{k_i}\right)\right]h_{n-k}C_{n-k}\]
\[\hspace{0cm} =P_n+h_nC_n+\sum_{k=1}^{n-1}\left[
\sum_{l=1}^{n-k}\sum_{\substack{k_1,...,k_l\geq 1 \\ \sum k_i=n-k}}
P_{k_l}\left(\prod_{j=1}^{l-1}h_{k_j}C_{k_j}\right)
+\left(\prod_{i=1}^{l}h_{k_i}C_{k_i}\right)\right]h_{k}C_{k}\]
\[=\sum_{l=0}^{n-1}\sum_{\substack{k_0,...,k_l\geq 1 \\
\sum k_i=n}}
P_{k_l}\left(\prod_{j=0}^{l-1}h_{k_j}C_{k_j}\right)
+\left(\prod_{i=0}^{l}h_{k_i}C_{k_i}\right).\]



\end{proof}

Let 
\[A_n^{\maj,\exc,\vec{\fix},\vec{\col}}(q,t,r,s)
:=\sum_{\pi\in C_N\wr S_n}q^{\maj(\pi)}t^{\exc(\pi)}r^{\vec{\fix}(\pi)}
s^{\vec{\col}(\pi)},\]
\[\left[\begin{array}{c} n \\ k \end{array}\right]_q:=
\frac{[n]_q!}{[n-k]_q![k]_q!},\]
\[\left[\begin{array}{c} n \\
k_0,...,k_l \end{array}\right]_q:=
\frac{[n]_q!}{[k_0]_q![k_1]_q!...[k_l]_q!},\]
\[\left[\begin{array}{c} n \\
\vec{\mu},\vec{\nu} \end{array}\right]_q:=
\frac{[n]_q!}{[\mu_0]_q![\mu_1]_q!...[\mu_{N-1}]_q!
[\nu_1]_q![\nu_1]_q!...[\nu_{N-1}]_q!}\]
if $\vec{\mu}\in\mathbb{N}^{N},
\vec{\nu}\in\mathbb{N}^{N-1}$ and $|\vec{\mu}|+|\vec{\nu}|=n$. We now apply the stable principal specialization to Corollaries \ref{rec1} and \ref{closed1} to obtain a recurrence and closed form formula for $A_n^{\maj,\exc,\vec{\fix},\vec{\col}}(q,t,r,s)$.

\begin{cor} \label{rec and closed}

For $n\geq 1$ we have
\[A_n^{\maj,\exc,\vec{\fix},\vec{\col}}(q,t,r,s)\]
\[=\left(\sum_{\substack{\vec{\mu}\in\mathbb{N}^{N} \\
\vec{\nu}\in\mathbb{N}^{N-1} \\
\left|\vec{\mu}\right|+\left|\vec{\nu}\right|=n}}
(-1)^{|\vec{\nu}|}\left[\begin{array}{c} n \\
\vec{\mu},\vec{\nu} \end{array}\right]_q
r^{\vec{\mu}}\left(\prod_{m=1}^{N-1}q^{\nu_m \choose 2}s_m^{\nu_m+\mu_m}
\right)\right)\]
\[+\sum_{k=0}^{n-1}\left[\begin{array}{c} n \\ k \end{array}\right]_q A_k^{\maj,\exc,\vec{\fix},\vec{\col}}(q,t,r,s)\left(
tq[n-k-1]_{tq}+[n-k]_{tq} \sum_{m=1}^{N-1}s_m\right),\]

\vspace{.75cm}
and

\[A_n^{\maj,\exc,\vec{\fix},\vec{\col}}(q,t,r,s)\]
\[=\sum_{l=0}^{n-1}\sum_{\substack{k_0,...,k_l\geq 1 \\ 
\sum k_i=n}}\left(\sum_{\substack{\vec{\mu}\in\mathbb{N}^{N} \\
\vec{\nu}\in\mathbb{N}^{N-1} \\
\left|\vec{\mu}\right|+\left|\vec{\nu}\right|=k_l}}
(-1)^{\left|\vec{\nu}\right|}
\left[\begin{array}{c} n \\
k_0,...,k_{l-1},\vec{\mu},\vec{\nu} \end{array}\right]_q
r^{\vec{\mu}}\prod_{m=1}^{N-1}
q^{{\nu_m\choose 2}}s_m^{\nu_m+\mu_m}\right)\]
\[\hspace{-.7cm} \times\left(\prod_{j=0}^{l-1}tq[k_j-1]_{tq}+[k_j]_{tq}
\sum_{m=1}^{N-1}s_m\right)
+\left[\begin{array}{c} n \\
k_0,...,k_l \end{array}\right]_q\left(\prod_{i=0}^{l}
tq[k_i-1]_{tq}+[k_i]_{tq}\sum_{m=1}^{N-1}s_m\right).\]

\end{cor}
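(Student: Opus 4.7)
The plan is to derive both formulas by applying the stable principal specialization $\Lambda$ to the symmetric-function identities in Corollaries~\ref{rec1} and \ref{closed1}, after the substitution $t \mapsto tq$. The starting point is the observation, already implicit in \eqref{eq5}--\eqref{eq6} and the proof of Theorem~\ref{thm 1.4}, that
\[
(q;q)_n\,\Lambda\!\left(Q_n(tq,r,s)\right) \;=\; A_n^{\maj,\exc,\vec\fix,\vec\col}(q,t,r,s).
\]
Indeed, from Lemma~\ref{lemma 2.1} and Definition~\ref{defn 2.1} we have $\Lambda(Q_{n,j,\vec\alpha,\vec\beta}) = (q;q)_n^{-1}\sum_{\pi\in W_{n,j,\vec\alpha,\vec\beta}} q^{\maj(\pi)-j}$, and sending $t\mapsto tq$ absorbs the $q^{-j}$ to give $(tq)^{j} q^{-j} = t^{\exc(\pi)}$ times $q^{\maj(\pi)}$.

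For the recurrence, I would apply $\Lambda$ to the identity of Corollary~\ref{rec1} with $t$ replaced by $tq$, and then multiply through by $(q;q)_n$. On the left we obtain $A_n^{\maj,\exc,\vec\fix,\vec\col}(q,t,r,s)$ as above. The convolution term $\sum_{k=0}^{n-1} Q_k(t,r,s)\,h_{n-k}\,C_{n-k}$ becomes
\[
\sum_{k=0}^{n-1}\frac{(q;q)_n}{(q;q)_k\,(q;q)_{n-k}}\,A_k^{\maj,\exc,\vec\fix,\vec\col}(q,t,r,s)\,C_{n-k}\bigl|_{t\mapsto tq},
\]
where $\Lambda(h_{n-k}) = 1/(q;q)_{n-k}$ produces the Gaussian binomial ${n \brack k}_q$, and $C_{n-k}|_{t\mapsto tq} = tq[n-k-1]_{tq} + [n-k]_{tq}\sum_m s_m$. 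The $P_n$ term uses $\Lambda(h_{\mu_i}) = 1/(q;q)_{\mu_i}$ and $\Lambda(e_{\nu_m}) = q^{\binom{\nu_m}{2}}/(q;q)_{\nu_m}$; multiplying by $(q;q)_n$ collapses the denominators into the $q$-multinomial ${n \brack \vec\mu,\vec\nu}_q$, matching the stated formula exactly.

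For the closed formula, the same two ingredients — specializing each $h_{k_j}$ and each factor of $h_\mu e_\nu$ inside $P_{k_l}$, then multiplying by $(q;q)_n$ — do all the work. For a composition $k_0+\cdots+k_{l-1}+|\vec\mu|+|\vec\nu|=n$, the quotient
\[
\frac{(q;q)_n}{\prod_{j=0}^{l-1}(q;q)_{k_j}\cdot\prod_i (q;q)_{\mu_i}\cdot\prod_m (q;q)_{\nu_m}}
\]
is precisely ${n \brack k_0,\ldots,k_{l-1},\vec\mu,\vec\nu}_q$, and the $\prod q^{\binom{\nu_m}{2}}$ factors come from the elementary generating functions. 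The all-$C$ summand $\prod_{i=0}^{l} h_{k_i}C_{k_i}$ yields ${n \brack k_0,\ldots,k_l}_q \prod (tq[k_i-1]_{tq} + [k_i]_{tq}\sum_m s_m)$ after the same substitution. Aside from careful bookkeeping of which indices carry which specialization, there is no real obstacle here: the content is entirely contained in Corollaries~\ref{rec1} and~\ref{closed1}, and the hardest part of the calculation is simply verifying that the multinomial denominators assemble correctly under the $t\mapsto tq$ shift.
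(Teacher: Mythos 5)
Your proposal is correct and is exactly the approach the paper intends: the paper states without further detail that Corollary~\ref{rec and closed} follows by applying the stable principal specialization $\Lambda$ to Corollaries~\ref{rec1} and~\ref{closed1}, and you have correctly filled in the details, in particular the identity $(q;q)_n\,\Lambda(Q_n(tq,r,s)) = A_n^{\maj,\exc,\vec\fix,\vec\col}(q,t,r,s)$, the evaluations $\Lambda(h_k)=1/(q;q)_k$ and $\Lambda(e_k)=q^{\binom{k}{2}}/(q;q)_k$, and the assembly of the $(q;q)$ denominators into $q$-multinomial coefficients via $\sum k_i = n$.
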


\section{Future work}

In this paper we have generalized the main results of Shareshian and Wachs in \cite{sw}. In Sections 5-7 of \cite{sw}, the authors investigate many other interesting properties exhibited by the Eulerian quasisymmetric functions and the relevant joint distribution formulas. We plan to present the corresponding generalizations of these properties in a forthcoming paper. This includes (as mentioned in Remark \ref{remark1}) a detailed proof that the cv-cycle type colored Eulerian quasisymmetric function $Q_{\check{\lambda},j}$, is in fact a symmetric function.

We expect a further study of $Q_{\check{\lambda},j}$ to be quite fruitful.  In \cite{ssw}, Sagan, Shareshian, and Wachs show that the $q$-analog of the Eulerian numbers and their cycle type refinement introduced in \cite{sw} provide an instance of the cyclic sieving phenomenon (see also \cite{rsw}). We suspect that our colored $q$-analog of the Eulerian numbers and their cycle type refinement will also provide an instance of the cyclic sieving phenomenon. We plan to present such results in a future paper.

\section{Acknowledgments}
I would like to thank my adviser Dr. Michelle Wachs for helping me choose a topic for this paper, as well as her continued advice and support. I am extremely grateful for the time she has spent verifying the accuracy of these results, and for all of her suggestions which have greatly improved the clarity of this paper.

\end{document}